\newcommand{\N}{\mathbb{N}}
\newcommand{\Z}{\mathbb{Z}}
\newcommand{\R}{\mathbb{R}}
\newcommand{\F}{\mathbb{F}}
\newcommand{\C}{\mathbb{C}}
\renewcommand{\P}{\mathbb{P}}
\newcommand{\V}{\mathcal{V}}
\newcommand{\HH}{\mathcal{H}}
\newcommand{\OO}{\mathcal{O}}
\newcommand{\W}{\mathcal{W}}
\newcommand{\x}{\mathbf{x}}
\newcommand{\floor}[1]{\left\lfloor #1 \right\rfloor}
\newcommand{\es}[1]{\Lambda(\omega^{#1})}
\newcommand{\esl}{\Lambda(\omega^{\ell})}
\newcommand{\adj}[1]{\text{adj}(#1)}
\newcommand{\Ts}[1]{T_s^{#1}}
\newcommand{\CWS}{\mathcal{C}}
\newcommand{\conj}{{\rm conj}}
\newcommand{\refl}{{\rm ref}}
\newcommand{\rot}{{\rm rot}}
\newcommand{\hi}{\textbf}
\theoremstyle{plain}
\newtheorem{thm}{Theorem}[section]
\newtheorem{prop}[thm]{Proposition}
\newtheorem{cor}[thm]{Corollary}
\newtheorem{lem}[thm]{Lemma}
\newtheorem{question}[thm]{Question}
\newtheorem{conjecture}[thm]{Conjecture}
\newtheorem*{claim*}{Claim}
\newtheorem*{thm*}{Theorem} 
\newtheoremstyle{case}{}{}{}{}{}{:}{ }{}
\theoremstyle{case}
\theoremstyle{definition}
\newtheorem{defn}[thm]{Definition}
\newtheorem{example}[thm]{Example}
\newtheorem{remark}[thm]{Remark}
\newtheorem{construction}[thm]{Construction}
\DeclareMathOperator{\GL}{GL}
\DeclareMathOperator{\diag}{diag}
\begin{document}
\title[Invariant hyperbolic plane curves]{Invariant hyperbolic curves:\\ 
determinantal representations and \\applications to the numerical range}

\author{Faye Pasley Simon}
\address{Greensboro College, Greensboro, NC 27401} 
\email{faye.simon@greensboro.edu}

 \author{Cynthia Vinzant}
\address{North Carolina State University, Raleigh, NC, USA 27695}
\email{clvinzan@ncsu.edu}

\begin{abstract}
Here we study the space of real hyperbolic plane curves that are invariant under actions of the cyclic and dihedral groups
and show they have determinantal representations that certify this invariance.  
We show an analogue of Nuij's theorem for the set of invariant hyperbolic polynomials of a given degree. 
The main theorem is that every invariant hyperbolic plane curve has a determinantal representation 
using a block cyclic weighted shift matrix. This generalizes previous work by Lentzos and the first author, as well as by Chien and Nakazato.
One consequence is that if the numerical range of a matrix is invariant under rotation, then 
it is the numerical range of a block cyclic weighted shift matrix. 
\end{abstract}

\keywords{hyperbolic polynomial, determinantal representation, numerical range, cyclic weighted shift matrix}

\subjclass[2010]{47A12, 15A60, 14H50,  52A10}

\maketitle

\section{Introduction}\label{sec:intro}

Here we study properties of a real plane curve that can be certified by a 
Hermitian determinantal representation, in particular, hyperbolicity and invariance under the action of a finite group. 
A real homogeneous polynomial is \emph{hyperbolic} with respect to a point in $\R^n$ if it is positive at the point and has real-rooted restrictions on every line through that point. 

Hyperbolic polynomials were introduced  in the mid-20th century by 
Petrovsky and G{\aa}rding, in the context of partial differential equations. 
Since then they have appeared in a wide range of areas and applications, 
including convex optimization \cite{Guler97, Renegar06}, 
combinatorics \cite{LiggetNegDep, MonCol, GurvitsPerm, RamGraph}, convex and complex analysis \cite{ConvAnalysis, BB_StabPres}, and operator theory \cite{HeltonVinnikov07,kadSing}.

A fundamental example is given by the determinant. 
On the real vector space of Hermitian matrices, the 
determinant is hyperbolic with respect to the 
identity matrix. 
More generally, given a linear matrix pencil $\mathcal{A}({\bf x}) = \sum_{i=1}^nx_iA_i$ where 
the matrices $A_1, \hdots, A_n$ are Hermitian and the matrix $\mathcal{A}({\bf e})$ is positive definite, 
the polynomial $f({\bf x}) = \det(\mathcal{A}({\bf x}))$ is hyperbolic with respect to ${\bf e}\in \R^n$. 
This determinantal representation certifies the hyperbolicity of $f$ and we say that $\mathcal{A}({\bf x})$ 
is a definite determinantal representation of $f$. 
See \cite{VinnikovSurvey} for more. 

\begin{figure}
\includegraphics[height=1.5in]{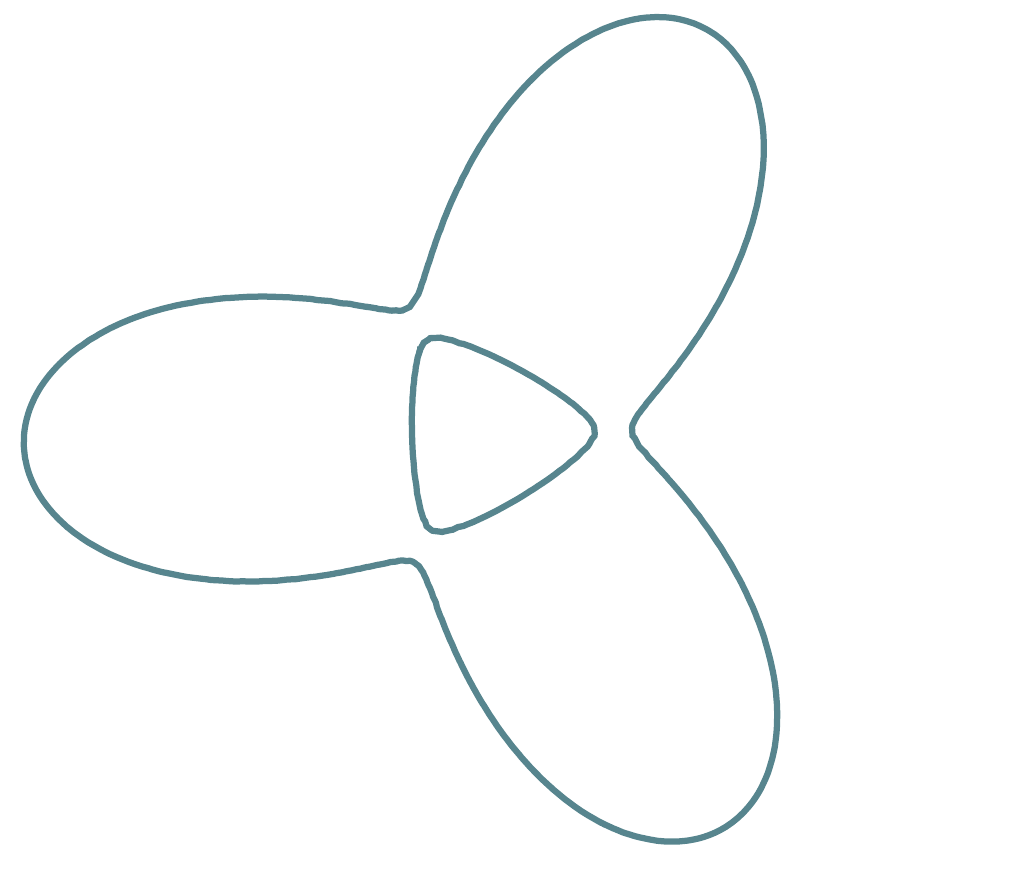} \ \ \ \ \ \ \ \includegraphics[height=1.5in]{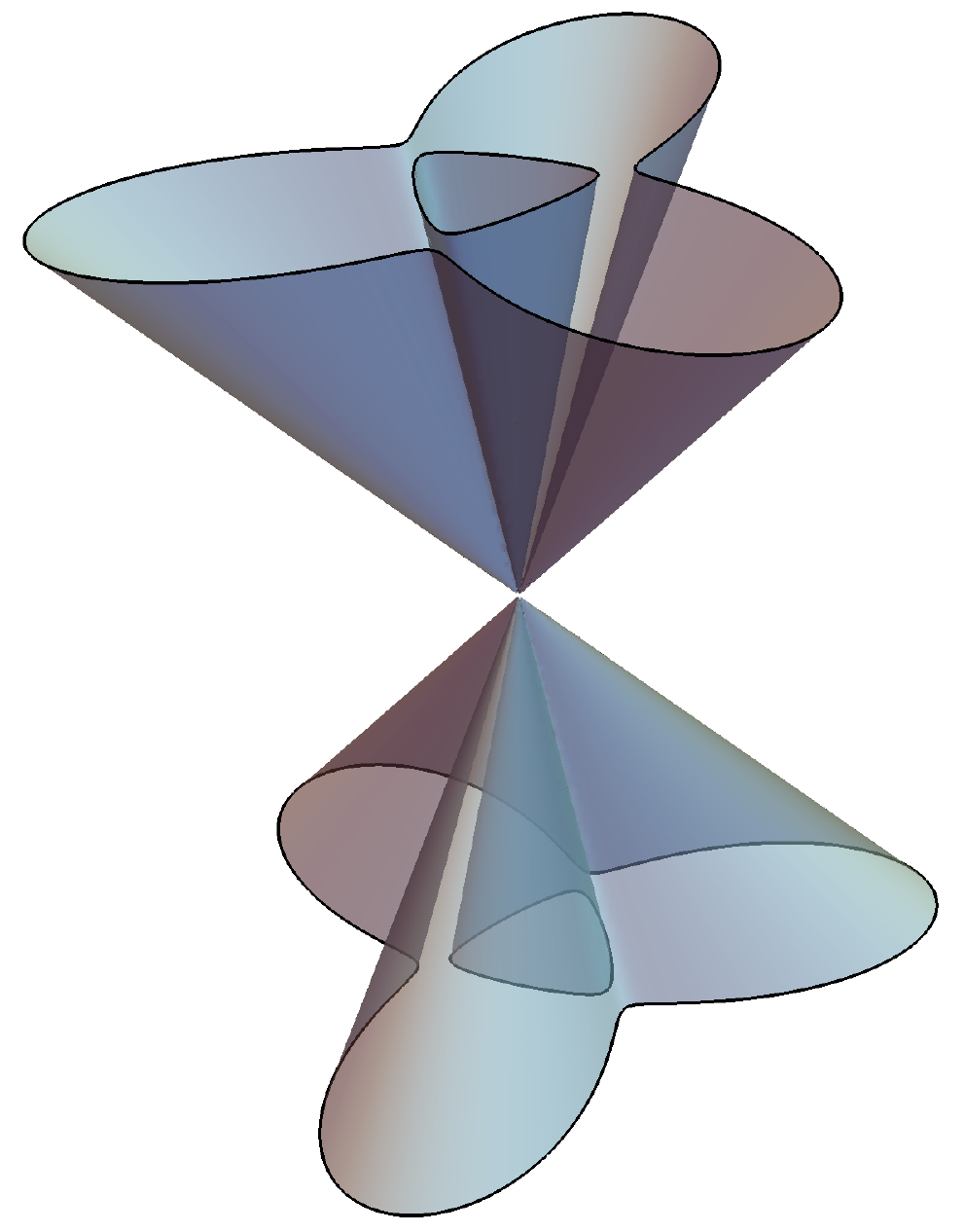}
\caption{The variety of a quartic invariant hyperbolic form in $\P^2(\R)$ and $\R^3$.}
\label{fig:quartic}
\end{figure}

For $n=3$, $\V_{\C}(f)$ is a plane curve in $\mathbb{P}^2(\C)$. 
Determinantal representations are a classical object of study \cite{Beauville00, Dolgachev}. 
In 1902, Dixon showed that every plane curve has a symmetric determinantal 
representation over the complex numbers \cite{Dixon}. 
Almost a hundred years later, Helton and Vinnikov proved the Lax conjecture, showing 
that every hyperbolic plane curve has a definite determinantal representation 
with real symmetric matrices \cite{HeltonVinnikov07}. 
Proving the existence of such representations involves the existence 
of two-torsion points on the Jacobian of the curve with certain real structure. 
Showing the existence of definite Hermitian representations is less delicate. 
Concrete methods for constructing  such representations were studied  by Plaumann and Vinzant \cite{PV}.

Here we study this question in the context of curves invariant under the action 
of a finite group, in particular the cyclic or dihedral groups. 
We say that a polynomial $f\in \R[t,x,y]$ is invariant under 
a group $\Gamma \subset \GL(\R^3)$ if $f(\gamma\cdot(t,x,y)) = f$ 
for all $\gamma\in \Gamma$ . 
We will be interested in the cyclic and dihedral groups 
$C_n = \langle {\rm rot}\rangle$ and $D_{2n} = \langle {\rm rot}, {\rm ref}\rangle$ on $\R^3$,
given by 
\begin{equation}\label{eq:CnDnDef}
{\rm rot}\cdot \begin{pmatrix}t\\x\\y\end{pmatrix} = 
\begin{pmatrix}
1& 0 & 0\\ 0 & \cos(2\pi/n) & \sin(2\pi/n)\\ 0 &   -\sin(2\pi/n) & \cos(2\pi/n)
\end{pmatrix} \begin{pmatrix}t\\x\\y\end{pmatrix}
\ \ \text{ and } \ \
{\rm ref}\cdot \begin{pmatrix}t\\x\\y\end{pmatrix} = \begin{pmatrix}\ t\\ \ x\\-y\end{pmatrix}.
\end{equation}

Our first main theorem is an analogue of Nuij's theorem on the structure of 
the set of hyperbolic polynomials of a given degree.

\newtheorem*{thm1}{Theorem~\ref{thm:InvHyp}}
\begin{thm1}
The set of polynomials in $\R[t,x,y]_d$  that are hyperbolic with respect to 
$(1,0,0)$ and invariant under the action of the cyclic or dihedral group (of any order) 
is contractible and equal to the closure of its interior in the Euclidean topology on $\R[t,x,y]_d$.
\end{thm1}

This is a key step in the proof that all such polynomials have an invariant definite
determinantal representation. 
Such representations were first studied by Chien and Nakazato in the context of 
numerical ranges \cite{Chien2013}. 
Given a matrix $A\in \C^{d \times d}$, define the polynomial 
\begin{equation}\label{eq:FA}
F_A(t,x,y)  \ = \ \det(t I + x (A+A^*)/2 + y (A-A^*)/2i) \ \in  \ \R[t,x,y]_d.
\end{equation}
Since the matrices $I$, $(A+A^*)/2$ and $(A-A^*)/2i$ are Hermitian and the identity matrix 
is positive definite, $F_A$ has a definite determinantal representation and is hyperbolic 
with respect to $(1,0,0)$. 
Chien and Nakazato show that if $A\in \C^{n\times n}$ 
is a complex \emph{cyclic weighted shift matrix}, 
then $F_A$ is invariant under the action of the cyclic group $C_n$ and if additionally 
$A$ has real entries then $F_A$ is invariant under the action of the dihedral group $D_{2n}$ \cite{Chien2013}. 
They also show that for $n=3$ and $4$ any hyperbolic, invariant polynomial $f\in \R[t,x,y]_n$ has such a 
representation. This was generalized by Lentzos and Pasley \cite{LP} who show this for all $n$. 

Here we generalize this to \emph{block cyclic weighted shift matrices}, as defined in 
Definition~\ref{defn: CWS}. For such a matrix $A$, the polynomial $F_A$ 
is hyperbolic with respect to $(1,0,0)$ and invariant under $C_n$ or $D_{2n}$, if 
additionally the matrix is real.  Moreover, any invariant hyperbolic polynomial has such a representation when its degree is an integer multiple of $n$.

\newtheorem*{thm2}{Theorem~\ref{thm:d=qn}}
 \begin{thm2}
	Let $d\in n\Z_{+}$ and suppose $f\in \R[t,x,y]_{d}$ is hyperbolic 
	with respect to $(1,0,0)$, with $f(1,0,0)=1$, and invariant under the action of ${\Gamma}$.
	\begin{itemize}
		\item[(a)] If $\Gamma = C_n$, then $f=F_A$ for some block cyclic weighted shift matrix $A \in \C^{d \times d}$.
		\item[(b)] If $\Gamma = D_{2n}$,  then $f=F_A$ for some block cyclic weighted shift matrix $A \in \R^{d \times d}$.
	\end{itemize}
\end{thm2}

The original motivation of Chien and Nakazato was to understand invariance of numerical ranges.
Formally, the numerical range of a matrix $A\in \C^{d\times d}$ is 
\[
\W(A) \ = \ \left\{ {\bf v}^*A{\bf v}  \ : \ {\bf v}\in \C^d, \ ||{\bf v}||=1 \right\} \ \subset \ \C.
\]
The Toeplitz-Hausdorff theorem states that this is a convex body in $\C\cong \R^2$ \cite{Hausdorff, Toeplitz}.
This set appears in applications related to engineering, numerical analysis, and differential equations~\cite{axelsson, cheng, eiermann, fiedler, goldberg}.

\begin{thm*}[Kippenhahn \cite{Kippenhahn}]
 Let $X^*$ be the dual variety to $X = V_{\C}(F_A)$. 
 The numerical range of $A\in \C^{d \times d}$ is the convex hull of the real, affine part of $X^*$. That is,
\[
\W(A) \ = \ {\rm conv}\left(\{x+iy : [1:x:y]\in X^*(\R)\}\right).
\]
\end{thm*}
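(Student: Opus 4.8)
The plan is to translate the assertion into the interplay between the supporting lines of the convex body $\W(A)$ and the tangent lines of the curve $X = V_{\C}(F_A)$, with the Hermitian pencil $\mathcal{A}(t,x,y) = tI + x(A+A^*)/2 + y(A-A^*)/2i$ serving as the dictionary between them. For $\theta\in[0,2\pi)$ set $H_\theta = \cos\theta\cdot(A+A^*)/2 + \sin\theta\cdot(A-A^*)/2i$, a Hermitian matrix, so that $\mathcal{A}(\lambda,-\cos\theta,-\sin\theta) = \lambda I - H_\theta$; in particular $\lambda$ is an eigenvalue of $H_\theta$ exactly when $[\lambda:-\cos\theta:-\sin\theta]\in X$. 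By the Rayleigh-quotient characterization the support function of $\W(A)$ is $h(\theta) = \max\{\re(e^{-i\theta}z):z\in\W(A)\} = \lambda_{\max}(H_\theta)$, and since $\W(A)$ is convex (Toeplitz--Hausdorff) it is the intersection of the half-planes $\{x\cos\theta+y\sin\theta\le\lambda_{\max}(H_\theta)\}$ over all $\theta$ (and bounded below by $\lambda_{\min}(H_\theta)$).

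For the inclusion $\W(A)\subseteq\operatorname{conv}\{x+iy:[1:x:y]\in X^*(\R)\}$, fix a $\theta$ for which $\lambda:=\lambda_{\max}(H_\theta)$ is a simple eigenvalue with unit eigenvector $\mathbf{v}_\theta$. Then $q_\theta := [\lambda:-\cos\theta:-\sin\theta]$ lies on $X$, and $\mathcal{A}(q_\theta) = \lambda I - H_\theta$ has one-dimensional kernel, so $q_\theta$ is a smooth point of $X$; by Jacobi's formula the adjugate $\operatorname{adj}\mathcal{A}(q_\theta)$ is a nonzero multiple of $\mathbf{v}_\theta\mathbf{v}_\theta^*$, hence $\nabla F_A(q_\theta)$ is a nonzero multiple of $\bigl(\mathbf{v}_\theta^*\mathbf{v}_\theta,\ \mathbf{v}_\theta^*\tfrac{A+A^*}{2}\mathbf{v}_\theta,\ \mathbf{v}_\theta^*\tfrac{A-A^*}{2i}\mathbf{v}_\theta\bigr) = (1,\re z_\theta,\im z_\theta)$, where $z_\theta := \mathbf{v}_\theta^*A\mathbf{v}_\theta\in\W(A)$. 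So the tangent line to $X$ at $q_\theta$ is the real affine point $[1:\re z_\theta:\im z_\theta]$ of $X^*$, mapping to $z_\theta$; and $\re(e^{-i\theta}z_\theta) = \mathbf{v}_\theta^*H_\theta\mathbf{v}_\theta = h(\theta)$, so $z_\theta$ is the point of $\W(A)$ exposed in direction $\theta$. When $X$ is smooth this holds for every $\theta$, so every exposed point of $\W(A)$ lies in the image of $X^*(\R)$; since the exposed points are dense in the extreme points (Straszewicz) and $\W(A)$ is the convex hull of its extreme points (Minkowski), the inclusion follows. (Equivalently one can argue by polarity once $0\in\operatorname{int}\W(A)$: $\W(A)^\circ$ is the spectrahedron $\{I - x\re A - y\im A\succeq 0\}$, whose boundary lies on a coordinate image of $X$, so $\partial\W(A) = \partial(\W(A)^\circ)^\circ$ lies on the dual curve.)

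For the reverse inclusion, take $[1:b:c]\in X^*(\R)$, so the real line $L = \{[t:x:y]:t+bx+cy=0\}$ lies on the dual variety; we must show $b+ic\in\W(A)$. Put $B = \tfrac{A+A^*}{2} - bI$ and $C = \tfrac{A-A^*}{2i} - cI$, both Hermitian, and restrict $F_A$ to $L$ by eliminating $t = -bx-cy$: the restriction is the binary form $g(p,q) = \det(pB+qC)$. If some root $[p_0:q_0]$ of $g$ has a kernel vector $\mathbf{v}$ with $\mathbf{v}^*B\mathbf{v} = \mathbf{v}^*C\mathbf{v} = 0$, then $b+ic = \mathbf{v}^*A\mathbf{v}/\mathbf{v}^*\mathbf{v}\in\W(A)$ and we are done; otherwise every root of $g$ satisfies $[p_0:q_0] = [-\mathbf{v}^*C\mathbf{v}:\mathbf{v}^*B\mathbf{v}]\in\P^1(\R)$, so $g$ is real-rooted. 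Since $L\in X^*$, the discriminant of $g$ vanishes, so $g$ has a repeated real root, attained at a point $q^*$ of $X$ on $L$. If $q^*$ is smooth on $X$ its kernel is one-dimensional, spanned by some $\mathbf{v}$, and a repeated root forces $\nabla g = 0$ at $q^*$; the adjugate formula then gives $\mathbf{v}^*B\mathbf{v} = \mathbf{v}^*C\mathbf{v} = 0$, hence $b+ic\in\W(A)$. If $q^*$ is a singular point of $X$, then $L$ must be a branch-tangent line there, so its direction $w_L$ (which may be taken as $(-b,1,0)$ or $(-c,0,1)$, whichever is not proportional to $q^*$) satisfies $\det\bigl(\mathcal{A}(w_L)|_V\bigr) = 0$ for $V = \ker\mathcal{A}(q^*)$; a kernel vector $\mathbf{v}$ of that block then has $\mathbf{v}^*\mathcal{A}(w_L)\mathbf{v} = 0$, which (together with $\mathbf{v}\in\ker(p_0B+q_0C)$) forces $\mathbf{v}^*B\mathbf{v} = \mathbf{v}^*C\mathbf{v} = 0$ and again $b+ic\in\W(A)$. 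Combining the two inclusions proves the theorem.

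The main obstacle is exactly the bookkeeping around singularities. The curve $X$ need not be smooth, so $\mathcal{A}$ can drop rank by more than one (forcing the branch-tangent analysis above and, for cuspidal branches, a further local argument); $X^*$ can be reducible or carry nodes and cusps coming from bitangents and flexes of $X$; and $\lambda_{\max}(H_\theta)$ need not always be simple. The cleanest way to dispatch all of this is a genericity argument: perturb $A$ so that $X$ is a smooth hyperbolic curve (a union of nested ovals) and $X^*$ is as generic as possible, where every point of $X$ has one-dimensional kernel and the two middle paragraphs apply verbatim, and then recover the general statement by continuity, using that $A\mapsto\W(A)$ is continuous in the Hausdorff metric. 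Finally, the degenerate cases where $\W(A)$ is a segment or a point, and the exceptional case where $F_A$ has a repeated factor, are immediate and can be disposed of at the outset.
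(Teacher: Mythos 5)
The paper itself does not prove Kippenhahn's theorem; it cites it as \cite[Theorem~10]{Kippenhahn} (pointing also to \cite{Chein2010, PSW19}), so there is no ``paper's proof'' to compare against. Your argument is an independent reconstruction, and the core dictionary you set up---support function $h(\theta)=\lambda_{\max}(H_\theta)$, and identification of the tangent line at a smooth corank-one point $q$ with the dual point $[1:\re z:\im z]$, $z=\mathbf{v}^*A\mathbf{v}$, via the adjugate/gradient formula---is the standard and correct engine in the smooth, generic case.

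Two remarks. First, the reverse inclusion is simpler than your binary-form/discriminant analysis makes it look: at a smooth point $q^*\in X$ with $\ker\mathcal{A}(q^*)=\C\mathbf{v}$, one has $\nabla F_A(q^*)\propto(\mathbf{v}^*\mathbf{v},\ \mathbf{v}^*\re(A)\mathbf{v},\ \mathbf{v}^*\im(A)\mathbf{v})$, so the dual of the tangent line at $q^*$ is directly $[1:\re z:\im z]$ with $z\in\W(A)$; every real affine point of $X^*$ is a limit of such points, and $\W(A)$ is closed. The branch-tangent case analysis you sketch is reproving this in a more exposed way, and the clause ``$L$ must be a branch-tangent line there'' is asserted without justification (a line in $X^*$ that passes through a singularity of $X$ need not be tangent to a branch at that singularity; it may instead be tangent at a different smooth point).

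Second, and more importantly, the step you delegate to a ``genericity and continuity'' argument is a genuine gap. Continuity of $A\mapsto\W(A)$ in the Hausdorff metric is true, but continuity of $A\mapsto\operatorname{conv}\{x+iy:[1:x:y]\in X^*(\R)\}$ is not obvious and is in fact delicate: the dual curve $X^*$ can jump discontinuously as $A$ varies. The clearest instance is the repeated-factor case, which you declare ``immediate'' at the end: if $F_A=f^k$ with $k\ge 2$, then $X=V(f)$ as a reduced variety and $X^*=V(f)^*$, but $\nabla F_A=kf^{k-1}\nabla f$ vanishes identically on $X$, so the adjugate/gradient mechanism that your entire argument rests on does not apply at any point of $X$. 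One must instead relate $\W(A)$ to the reduced curve $V(f)$ and its dual, which requires a separate argument (this is precisely the delicate point that is treated carefully in the modern reference \cite{PSW19} that the paper cites). As written, your proof is valid in the smooth/generic case but does not actually close the degenerate cases it claims to.
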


Using the above theorem on invariant determinantal representations, 
we show the following about numerical ranges that are invariant under the action of the cyclic or dihedral group.

\newtheorem*{thm3}{Theorem~\ref{thm: num range}}
\begin{thm3}
Let $A\in \C^{d\times d}$ and let $\W(A)$ denote its numerical range. 
If $\W(A)$ is invariant under multiplication by $n$-th roots of unity, then there exists a block cyclic weighted shift matrix $B$ 
of size $\leq n\cdot \lceil d/n\rceil$ so that $\W(A) = \W(B)$. Moreover if $\W(A)$ is invariant under conjugation, then the entries of $B$ can be taken in $\R$. 
\end{thm3}

\begin{figure}
\includegraphics[height=1.5in]{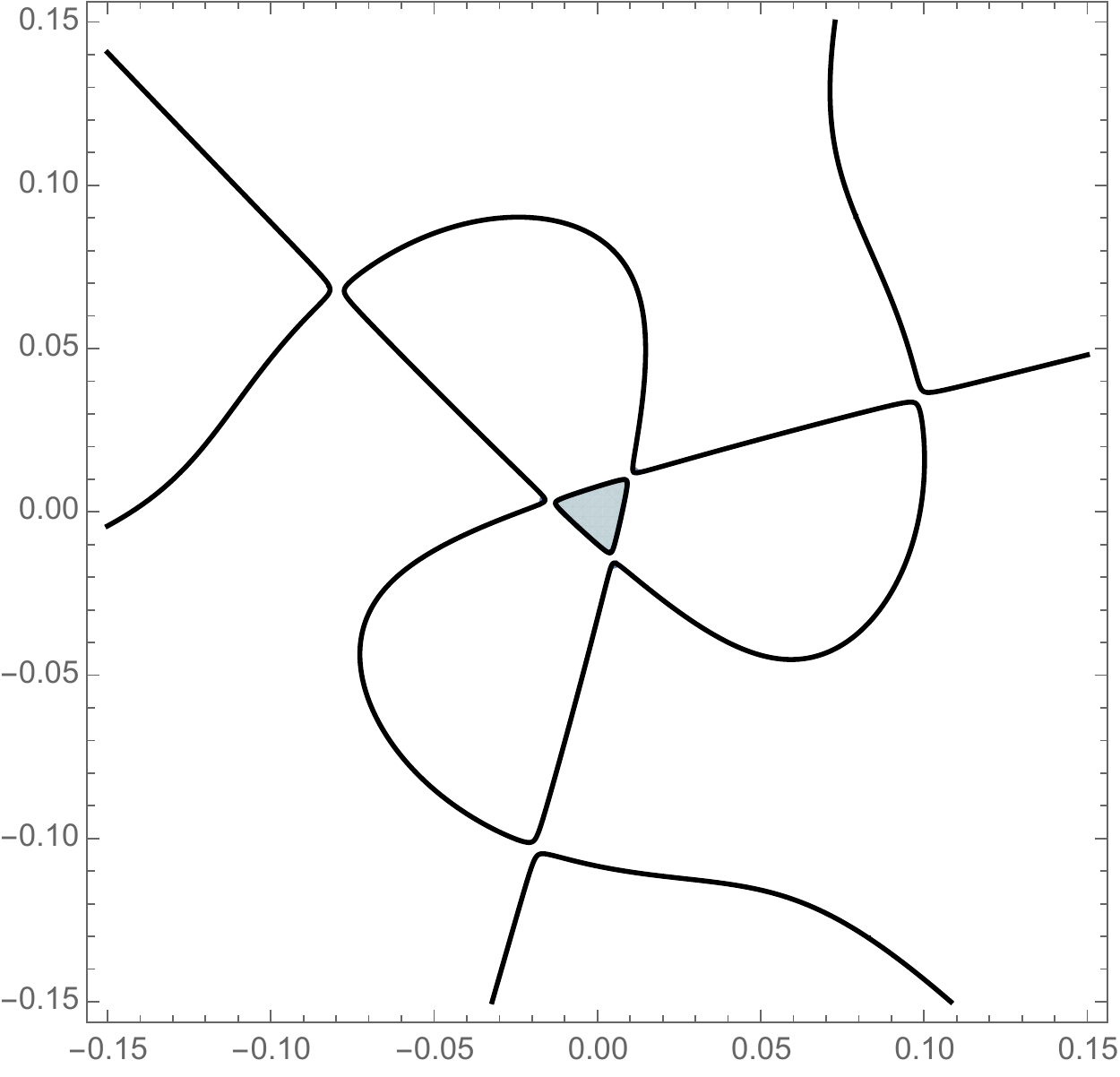} \ \ \ \ \ \ \ \includegraphics[height=1.5in]{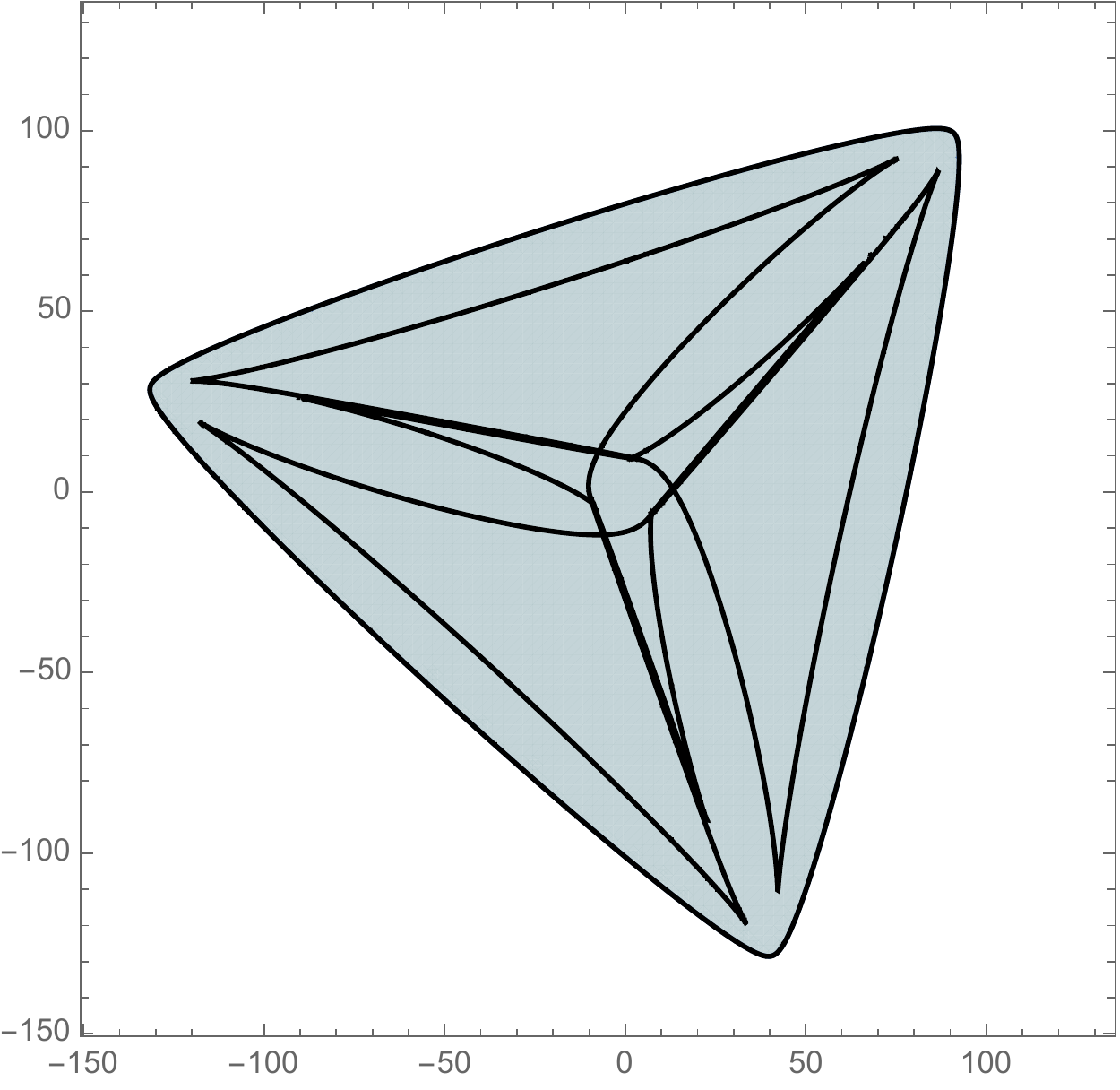}
\caption{The curve $\V_{\R}(F_A)$ and its dual curve  bounding $\W(A)$ for $A\in \C^{5\times 5}$.}
\label{fig:numRange}
\end{figure}

The paper is organized as follows. In Section~\ref{sec:Nuij}, we introduce the theory of hyperbolic polynomials and prove an invariant analogue of Nuij's theorem on the topology of this set.  The precise definition of block cyclic weighted shift matrices and their connection to invariant hyperbolic polynomials is discussed in Section~\ref{sec:CWS}. 
 In Sections~\ref{sec: d=qn smooth}~and~\ref{sec: d=qn dihedral}, we prove parts (a) and (b) of Theorem~\ref{thm:d=qn} under some genericity conditions on the curve $\V_{\C}(f)$ and in Section~\ref{sec: topology} we address the degenerate cases to complete the proof.  
Applications to numerical ranges are given in Section~\ref{sec: num range}. 
Finally we conclude with a discussion of open problems in Section~\ref{sec:questions}.\\

\noindent {\bf Acknowledgements.}  We thank Ricky Liu, Hiroshi Nakazato, Linda Patton, Daniel Plaumann, Edward Poon, and Rainer Sinn 
for helpful comments and discussions.
Part of this work was done while both authors were participants 
at the Fall 2018 Nonlinear Algebra program at the Institute for Computational and Experimental Research in Mathematics.
Both authors were partially supported by the US NSF-DMS grant \#1620014. 
The second author was also partially supported by the US NSF-DMS grant \#1943363. 
This material is based upon work directly supported by the National Science Foundation Grant No. DMS-1926686, and indirectly supported by the National Science Foundation Grant No. CCF-1900460.

 \section{Invariant hyperbolic polynomials} \label{sec:Nuij} 
 For a field $\mathbb{F} = \R$ or $\C$, we use 
 $\mathbb{F}[t,x,y]_d$ to denote the $\mathbb{F}$-vectorspace of polynomials in variables $t,x,y$ 
 that are homogeneous of degree $d$. 
 Given $f\in \mathbb{F}[t,x,y]_d$, we use $\V_{\F}(f)$ to denote the variety of $f$ in the projective plane 
 $\P^2(\F)$ over $\F$. 
 A finite group $\Gamma$ of $ {\rm GL}(\R^3)$ defines an action on the vector space $\mathbb{F}[t,x,y]$
given by $\gamma\cdot f = f(\gamma\cdot(t,x,y))$. 
Let $\mathbb{F}[t,x,y]^{\Gamma}$ denote the subring of invariant polynomials $f$ satisfying $f(\gamma\cdot(t,x,y)) = f$ for all $\gamma\in \Gamma$ and let $\mathbb{F}[t,x,y]^{\Gamma}_d$
denote homogeneous elements of degree $d$ in this ring. 
By a classical theorem of Hilbert, the invariant ring  $\mathbb{F}[t,x,y]^{\Gamma}$ is finitely generated. 
 For the group actions of $C_n$ and $D_{2n}$ given in \eqref{eq:CnDnDef}, we can explicitly find these generators.
 See e.g.~\cite{GTW2013}. Namely, 
 \begin{align*}
\R[t,x,y]^{C_n} &\ = \ \R\!\left[t, x^2+y^2, \mathfrak{R}[(x+iy)^n] , \mathfrak{I}[(x+iy)^n]\right],  \  \text{ and }  \\
\R[t,x,y]^{D_{2n}} &\ = \ \R\!\left[t, x^2+y^2, \mathfrak{R}[(x+iy)^n]\right]
 \end{align*}  
 where 
\begin{equation}
 \mathfrak{R}[(x+iy)^n] = \frac{(x+iy)^n + (x-iy)^n}{2} \ \ \text{ and } \ \  \mathfrak{I}[(x+iy)^n] = \frac{(x+iy)^n-(x-iy)^n}{2i}.
\end{equation}

 We will be particularly interested in the set of invariant \emph{hyperbolic} polynomials.
 
\begin{defn}A polynomial $f\in \R[t,x,y]_d$ is {\bf hyperbolic} with respect to a point ${\bf e}\in \R^3$ if 
$f({\bf e})> 0$ and $f(\lambda {\bf e} - {\bf v})\in \R[\lambda]$ 
is real-rooted for every choice of ${\bf v}\in \R^3$.  We call $f$ {\bf strictly hyperbolic} with respect to 
${\bf e}$ if $f$ is hyperbolic with respect to ${\bf e}$ and the roots of $f(\lambda {\bf e} - {\bf v})$ are 
distinct for every ${\bf v} \in \R^3\backslash (\R{\bf e})$. 
By \cite[Lemma~2.4]{PV}, an equivalent definition is that $f$ is hyperbolic with respect to ${\bf e}$
and its real projective variety $\V_{\R}(f)$ is smooth. 

A polynomial $g\in \R[t,x,y]_{d-1}$ is {\bf interlaces} $f$ with respect to ${\bf e}\in \R^3$
if both are hyperbolic with respect to ${\bf e}$ and the roots of $g(\lambda {\bf e} - {\bf v})$ interlace 
the roots of $f(\lambda {\bf e} - {\bf v})$ for every ${\bf v}\in \R^3$.
We say that $g$  {\bf strictly interlaces}  $f$ with respect to 
${\bf e}$ if  $g$ interlaces  $f$ with respect to ${\bf e}$ the roots 
of $g(\lambda {\bf e} - {\bf v})$ and $f(\lambda {\bf e} - {\bf v})$ are all distinct 
for every ${\bf v} \in \R^3\backslash (\R{\bf e})$. 
 \end{defn}

For $\Gamma = C_n, D_{2n}$ denote the set of hyperbolic, invariant forms of degree $d$ by 
\[
\mathcal{H}_d^{\Gamma} = \left\{ f\in \R[t,x,y]_d^{\Gamma} \ : \ f(1,0,0) = 1, \  f \text{ is hyperbolic with respect to }(1,0,0)\right\}. 
\]
The subset of hyperbolic polynomials without any real singularities we denote by 
\[
(\mathcal{H}^\circ)_d^{\Gamma}
= \left\{ f\in \mathcal{H}_d^{\Gamma} \  : \  \V_{\R}(f) \subset \P^2(\R)\text{ is smooth} \right\}.
\]
As noted above, these are exactly the strictly hyperbolic forms in $\mathcal{H}_d^{\Gamma}$. 
Polynomials in $(\mathcal{H}^\circ)_d^{\Gamma}$ may have complex singularities and indeed 
may be forced to do for some choices of $d$, as discussed in Section~\ref{subsec:singularities}.

Nuij \cite{nuij} showed that the set of hyperbolic polynomials of a given degree is contractible in 
$\R[t,x,y]_d\cong \R^{\binom{d+2}{2}}$ and equal to the closure of its interior, which consists of strictly 
hyperbolic polynomials. 
Here we show an analogous statement for hyperbolic polynomials invariant under the cyclic and dihedral groups.

\begin{thm} \label{thm:InvHyp}
For $\Gamma = C_n$ or $D_{2n}$ and any $d\in \Z_+$, 
both $(\mathcal{H}^\circ)_d^{\Gamma}$ and $\mathcal{H}_d^{\Gamma}$ are contractible. Moreover, 
$(\mathcal{H}^\circ)_d^{\Gamma}$ is 
a full-dimensional, open subset of the set of polynomials in $\R[t,x,y]_d^{\Gamma}$ with coefficient of $t^d$ equal to $1$ and 
its closure equals $ \mathcal{H}_d^{\Gamma} $.
\end{thm}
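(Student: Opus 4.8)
The plan is to adapt Nuij's original argument to the invariant setting by finding a continuous deformation that preserves $\Gamma$-invariance at every stage. The core tool in Nuij's proof is the one-parameter family of operators $T_s = 1 + s(x\,\partial_t)$ (or rather $T_s(f) = f + s\, \ell \cdot \partial_{\bf e} f$ for a suitable linear form $\ell$), which has the property that if $f$ is hyperbolic with respect to ${\bf e}$ then $T_s(f)$ is strictly hyperbolic for all $s>0$, and $T_s^{d}$ applied to any hyperbolic $f$ lands in the strictly hyperbolic locus. First I would check which such operators preserve the invariant ring $\R[t,x,y]_d^\Gamma$. Since $\partial_{\bf e} = \partial_t$ commutes with the $C_n$ and $D_{2n}$ actions (because these fix the $t$-coordinate), the issue is the choice of the multiplier linear form: multiplying by $x$ alone is not invariant, but one can instead use an averaged operator, e.g. $f \mapsto f + s\,t\,\partial_t f$, or better, mimic Nuij using a form in the span of $x,y$ after symmetrizing over $\Gamma$. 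The cleanest fix is to replace Nuij's directional operator by one built from $t\partial_t$ together with a Reynolds-averaged lower-order multiplier so that the deformation stays inside $\R[t,x,y]_d^\Gamma$; I would verify that the hyperbolicity-improving estimates in Nuij's lemma survive this modification, using that the restriction to any line through $(1,0,0)$ is governed only by how the operator acts on the univariate polynomial $f(\lambda,{\bf v})$.

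Granting such an invariant Nuij-type operator $\Ts{}$, the structure of the proof is: (1) show $(\HH^\circ)_d^\Gamma$ is open in the affine slice $\{f\in\R[t,x,y]_d^\Gamma : [t^d]f = 1\}$ — this is immediate since strict hyperbolicity (equivalently smoothness of $\V_\R(f)$ together with hyperbolicity) is an open condition, being the non-vanishing of a discriminant-type quantity, and invariance is a linear (closed) condition defining the ambient slice; (2) show $(\HH^\circ)_d^\Gamma$ is nonempty and indeed full-dimensional in that slice, for which I would exhibit an explicit invariant strictly hyperbolic polynomial — a natural candidate is a product or perturbation involving $t$, $x^2+y^2$, and powers, e.g. something like $t^d$ perturbed by $\epsilon\cdot(x^2+y^2)\cdot(\text{invariant hyperbolic of lower degree})$, or a determinant $F_A$ for a small block cyclic weighted shift $A$; (3) show $\HH_d^\Gamma = \overline{(\HH^\circ)_d^\Gamma}$: the inclusion $\supseteq$ follows since $\HH_d^\Gamma$ is closed (hyperbolicity with respect to a fixed point is a closed condition, normalization and invariance are closed), and $\subseteq$ follows by applying $\Ts{s}$ for small $s>0$ to any $f\in\HH_d^\Gamma$, landing in $(\HH^\circ)_d^\Gamma$, and letting $s\to 0$; (4) contractibility of both sets: define the homotopy $H(f,s) = \Ts{s}^{\,d}(f)$ for $s\in[0,1]$ composed with the straight-line homotopy in the deformation parameter, contracting $\HH_d^\Gamma$ onto a point in $(\HH^\circ)_d^\Gamma$ and then contracting that convex-ish strictly hyperbolic region onto the chosen base point. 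One must check $\Ts{s}^d$ maps $\HH_d^\Gamma$ into $(\HH^\circ)_d^\Gamma$ for $s>0$ and into $\HH_d^\Gamma$ for all $s\ge 0$, preserving the normalization $f(1,0,0)=1$ (rescaling if necessary, which is continuous), and that the composite is continuous and fixes the base point.

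The main obstacle I anticipate is step (1) of the first paragraph: producing a Nuij-type operator that simultaneously (i) improves hyperbolicity in the strong sense Nuij needs, (ii) maps the degree-$d$ piece to itself, and (iii) preserves $\Gamma$-invariance. Nuij's multiplier $x$ is manifestly not invariant, so the naive operator does not work; averaging over $\Gamma$ risks destroying the key interlacing/positivity estimate, since the sum $\sum_{\gamma\in\Gamma}(\gamma\cdot\ell)\partial_t f$ need not interact with a single line in the controlled way. The resolution I would pursue: because the group fixes ${\bf e}=(1,0,0)$, every line through ${\bf e}$ is $\Gamma$-stable only setwise at best, but the relevant estimate is really about the bivariate restriction; so I would instead run Nuij's argument using the operator $f\mapsto t\,\partial_t f + (\text{degree-}0\text{ shift})$, i.e. work with $T_s = 1 + s\,t\,\partial_t$ — but this only rescales roots and does not separate them. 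The genuinely correct device, and what I expect the authors do, is to use a multiplier adapted to each direction but to verify the homotopy stays invariant because one applies the same family to all of $\R[t,x,y]_d$ and then observes that it restricts to $\R[t,x,y]_d^\Gamma$ precisely when the multiplier lies in an invariant-compatible position — concretely, using $x^2+y^2$ as a (degree-two, invariant) multiplier paired with a second-order operator $\partial_t^2$, i.e. $T_s = 1 + s(x^2+y^2)\partial_t^2$, and re-deriving Nuij's lemma for this second-order variant. Checking that this second-order operator still sends hyperbolic polynomials to strictly hyperbolic ones after finitely many applications, with all the interlacing bookkeeping, is where the real work lies; the remaining topological steps are then formal consequences of having such an operator in hand.
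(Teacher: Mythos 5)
You have landed on the paper's key device --- a second-order Nuij-type operator with the invariant multiplier $x^2+y^2$ paired with $\partial_t^2$ --- so the overall route is the same. But two points need repair, one small and one genuine.

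The small one is the sign. The operator must be $\Ts{}(f) = f - s^2(x^2+y^2)\,\partial_t^2 f$, not $f + s(x^2+y^2)\partial_t^2 f$ with $s>0$. Restricted to a line through $(1,0,0)$ the correct operator acts on $p(\lambda)=f(\lambda,a,b)$ as $p \mapsto p - s^2p'' = (1+sD)(1-sD)p$ with $D=d/d\lambda$ (after absorbing $\sqrt{a^2+b^2}$ into $s$), i.e.\ as a composition of two first-order Nuij operators $p\mapsto p\pm sp'$, each of which preserves real-rootedness and drops every root multiplicity by one while only creating simple new roots; this is exactly how the ``interlacing bookkeeping'' you worry about gets done, and it is why $d$ applications land in the strictly hyperbolic locus. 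With the plus sign the univariate operator is $1+cD^2$ with $c\geq 0$, which destroys real-rootedness (e.g.\ $t^2\mapsto t^2+2c$), so that version of the lemma is false.

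The genuine gap is in your step (4). You propose to contract $(\mathcal{H}^\circ)_d^{\Gamma}$ by a ``straight-line homotopy'' on the grounds that it is ``convex-ish.'' It is not convex, and Nuij's theorem is nontrivial precisely for this reason; no linear interpolation argument is available. The missing ingredient is a second family of invariance- and hyperbolicity-preserving maps, namely $G_sf(t,x,y)=f(t,s^2x,s^2y)$, which preserves strict hyperbolicity for $s\neq 0$ and collapses $f$ to $f(t,0,0)=t^d$ at $s=0$. The deformation retraction is then $s\mapsto T_{1-s}^{\,d}G_sf$ on $[0,1]$: at $s=1$ this is $f$, at $s=0$ it is $T_1^{\,d}t^d$ independent of $f$, and for $s\in[0,1)$ it stays in $(\mathcal{H}^\circ)_d^{\Gamma}$. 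This single homotopy contracts both $\mathcal{H}_d^{\Gamma}$ and $(\mathcal{H}^\circ)_d^{\Gamma}$ to a point and also exhibits $\mathcal{H}_d^{\Gamma}$ as the closure of $(\mathcal{H}^\circ)_d^{\Gamma}$ (via $\Ts{d}(f)\to f$ as $s\to 0$), so once you add $G_s$ your outline closes up.
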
 

The proof requires developing an invariant version of techniques used in \cite{nuij}. 
To understand how the sets $(\HH^\circ)_d^{\Gamma}$ and $\HH_d^{\Gamma}$ relate, we introduce the following linear operator on invariant polynomials. 
For $s \in \R$, define the linear map $\Ts{}: \R[t,x,y]_d \to \R[t,x,y]_d$ by 
\[
\Ts{}(f) = f - s^2 (x^2+y^2) \frac{\partial ^2 f}{\partial t^2}\text{.}
\]

\begin{lem}\label{lem:T_operator}
For any $s\in \R_{>0}$, the map $\Ts{}$ preserves invariance under $\Gamma$ and hyperbolicity. That is, 
$\Ts{}(\mathcal{H}_d^{\Gamma} ) \subset \mathcal{H}_d^{\Gamma}$. 
Moreover for any $f\in \mathcal{H}_d^{\Gamma}$, the polynomial $\Ts{d}(f)$, obtained by applying $\Ts{}$ $d$ times to $f$, 
is strictly hyperbolic with respect to $(1,0,0)$. That is $\Ts{d}(\mathcal{H}_d^{\Gamma} ) \subset (\mathcal{H}^\circ)_d^{\Gamma}$.
\end{lem}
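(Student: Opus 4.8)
The plan is to show separately that $\Ts{}$ preserves invariance, that it preserves hyperbolicity, and that $d$-fold application lands in the strictly hyperbolic locus. Invariance is the easy part: the Laplacian-type operator $x^2+y^2$ and the differential operator $\partial^2/\partial t^2$ are each individually invariant under the $C_n$ and $D_{2n}$ actions in \eqref{eq:CnDnDef}, since these actions fix $t$ and act orthogonally on $(x,y)$, so $(x^2+y^2)\,\partial^2/\partial t^2$ commutes with the $\Gamma$-action and $\Ts{}(f)$ is invariant whenever $f$ is. One also checks immediately that the coefficient of $t^d$ is unchanged (the operator $(x^2+y^2)\partial^2/\partial t^2$ kills $t^d$), so $\Ts{}(f)(1,0,0)=1$.

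For hyperbolicity I would mimic Nuij's argument. Fix $f$ hyperbolic with respect to $\mathbf e=(1,0,0)$ and a direction $\mathbf v=(t_0,x_0,y_0)$; write $p(\lambda)=f(\lambda\mathbf e-\mathbf v)$, a real-rooted polynomial in $\lambda$. The key computation is to identify $\Ts{}(f)(\lambda\mathbf e - \mathbf v)$ in terms of $p$ and its derivatives. Since $\partial^2 f/\partial t^2$ evaluated along the line $\lambda\mathbf e-\mathbf v$ is $p''(\lambda)$ (because differentiating in $t$ is differentiating in the $\mathbf e$-direction, i.e.\ in $\lambda$), and $(x^2+y^2)$ evaluates to the constant $c:=x_0^2+y_0^2\ge 0$, we get
\[
\Ts{}(f)(\lambda\mathbf e-\mathbf v) \ = \ p(\lambda) \ - \ s^2 c\, p''(\lambda).
\]
So it suffices to show: if $p\in\R[\lambda]$ is real-rooted of degree $\le d$ and $a\ge 0$, then $p - a\,p''$ is real-rooted. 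This is a classical fact — $p-a p''$ is obtained from $p$ by applying the operator $1-a\frac{d^2}{d\lambda^2}$, which for $a\ge0$ factors (over a suitable completion / via the Hermite–Poulain theorem, since $1-au^2$ has only real roots as a polynomial in $u$) and hence preserves the set of real-rooted polynomials. Combined with $\Ts{}(f)(\mathbf e)=1>0$, this gives $\Ts{}(f)\in\mathcal H_d^\Gamma$.

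The substantive part is the last claim: $\Ts{d}(f)$ is \emph{strictly} hyperbolic, i.e.\ $\V_\R(\Ts{d}(f))$ is smooth. Here I would track what each application of $\Ts{}$ does to a real singular point. Suppose $[t^*:x^*:y^*]$ is a real point of $\V_\R(g)$ where $g=\Ts{}^{(k)}(f)$ is still hyperbolic; restricting to a generic line through $\mathbf e$ and that point, the Hermite–Poulain–type operator $1-s^2 c\,\frac{d^2}{d\lambda^2}$ with $c>0$ strictly separates any repeated real root of $p$ into distinct real roots (this is the precise quantitative input: a strictly hyperbolic such operator, applied to a real-rooted polynomial, strictly decreases the maximal multiplicity of real roots, or at least resolves the particular singularity we care about). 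The bookkeeping needs care because the lines through $\mathbf e$ where $c=x_0^2+y_0^2$ vanishes are exactly the line $x=y=0$, i.e.\ the point $\mathbf e$ itself is fixed, so singularities can only persist along $\{x=y=0\}$, which contains no projective point of $\V_\R(g)$ when $g(1,0,0)=1$. Thus each application strictly reduces the total real singular content, and after $d$ steps (the degree bounds the worst multiplicity and hence the number of steps needed) no real singularities remain. The main obstacle will be making this last step rigorous and \emph{$\Gamma$-equivariant}: a single $\Ts{}$ might not resolve a whole $\Gamma$-orbit of singularities at once in an obviously monotone way, so I expect to need a careful choice of the "complexity" functional on $\mathcal H_d^\Gamma$ (e.g.\ counting real singular points with multiplicity, or the maximal order of tangency of $\V_\R(g)$ with lines through $\mathbf e$) that is $\Gamma$-invariant and provably strictly decreases under $\Ts{}$ until it hits zero — this is exactly the kind of estimate Nuij's original proof supplies in the non-equivariant case, and the work is in checking it survives the symmetry constraint.
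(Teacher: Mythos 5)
Your invariance and hyperbolicity-preservation steps match the paper's; the paper's only difference there is cosmetic (it factors $T = T_+\circ T_-$ with $T_\pm(p)=p\pm sp'$ rather than invoking Hermite--Poulain directly, but both yield that $p\mapsto p-ap''$ preserves real-rootedness). The place where you go astray is the final paragraph on strictness. The ``obstacle'' you anticipate --- that the complexity functional must be $\Gamma$-invariant and that a single $\Ts{}$ might not handle a whole $\Gamma$-orbit monotonically --- is not actually there. The strictness argument is carried out \emph{line by line}: for a fixed $(a,b)\neq(0,0)$, set $p(t)=f(t,a,b)$ and observe that $\Ts{}(f)(t,a,b)=p(t)-s^2(a^2+b^2)p''(t)$. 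The assertion that $\Ts{d}(f)$ is strictly hyperbolic reduces entirely to the univariate claim that applying $T\colon p\mapsto p-s^2p''$ (with positive constant) $d$ times to a real-rooted $p$ of degree $d$ yields a polynomial with simple roots. No global or $\Gamma$-equivariant bookkeeping across $\V_{\R}(f)$ is required; $\Gamma$-invariance is used only once, to see that $\Ts{}$ preserves $\R[t,x,y]_d^\Gamma$, and it plays no role in the multiplicity count. To make the multiplicity count precise, the paper simply quotes the lemma in \cite{nuij}: for $T_\pm(p)=p\pm sp'$, every root of $T_\pm(p)$ that is a root of $p$ has multiplicity one less than in $p$, any repeated root of $T_\pm(p)$ is already a repeated root of $p$, and all new roots are simple. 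Composing $T=T_+\circ T_-$ thus drops each multiplicity by two and introduces only simple roots, so $T^d(p)$ has only simple roots. That is the ``precise quantitative input'' you were groping for --- it is already univariate and already in Nuij, and you do not need to construct any $\Gamma$-invariant complexity functional on $\HH_d^\Gamma$.
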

	
\begin{proof} First, note that if $f\in \R[t,x,y]_d^{\Gamma}$, then so are $x^2+y^2$ and $ \frac{\partial ^2 f}{\partial t^2}$, meaning that $\Ts{}$ preserves invariance under $\Gamma$.  

For the other claims, consider the operator on univariate polynomials $T: \R[t] \to \R[t]$ where $T(p) = p-s^2p''$.
We claim that for any real-rooted polynomial $p\in \R[t]$, $T(p)$ is also real rooted and the 
roots of $T^{d}(p)$ where $d= \deg(p)$ are simple.
To see this, consider the maps $T_{\pm}: \R[t] \to \R[t]$ where $T_{\pm}(p) = p\pm sp'$ for some $s \in \R$. The roots of $T_{\pm}(p)$ have multiplicity one less than those of $p$, any repeated roots of $T_{\pm}(p)$ are also repeated roots of $p$ and any added roots of $T_{\pm}(p)$ are simple by the lemma of \cite{nuij}. Let $T = T_{+} \circ T_{-}$ so $T(p) = p-s^2p''$. The roots of $T(p)$ have multiplicity two less than those of $p$, and any repeated roots are also repeated roots of $p$. Any other roots of $T(p)$ are simple. If $d = \deg(p)$, this implies every root of $T^d(p)$ is simple.

Since for any $(a,b)\in \R^2$, the restriction $\Ts{}(f)(t,a,b)$ equals the image of $p(t) = f(t,a,b)$ under the univariate 
operator $T$, the polynomial $\Ts{}(f)$ is hyperbolic with respect to $(1,0,0)$ and $\Ts{d}(f)$ is strictly hyperbolic. 
\end{proof}	

For example, the left-most curve in Figure~\ref{fig:gallery} 
is defined by an element $f$ of $\mathcal{H}_6^{C_5}$ and has real singularities. Just to its right is the smooth curve defined by $T_s(f)$ in $(\mathcal{H}^\circ)_6^{C_5}$. 

\begin{figure}
\includegraphics[height=1.5in]{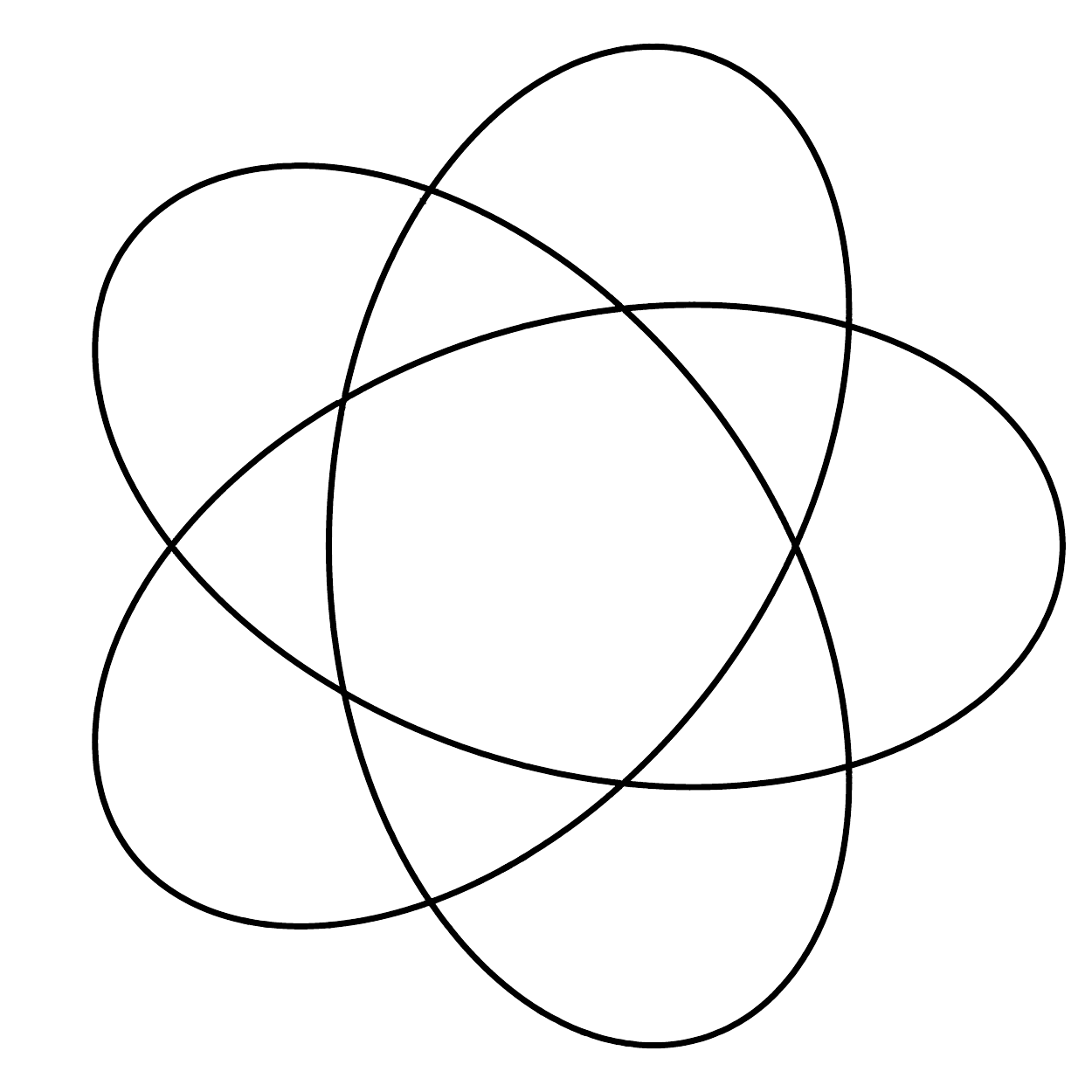}
\includegraphics[height=1.5in]{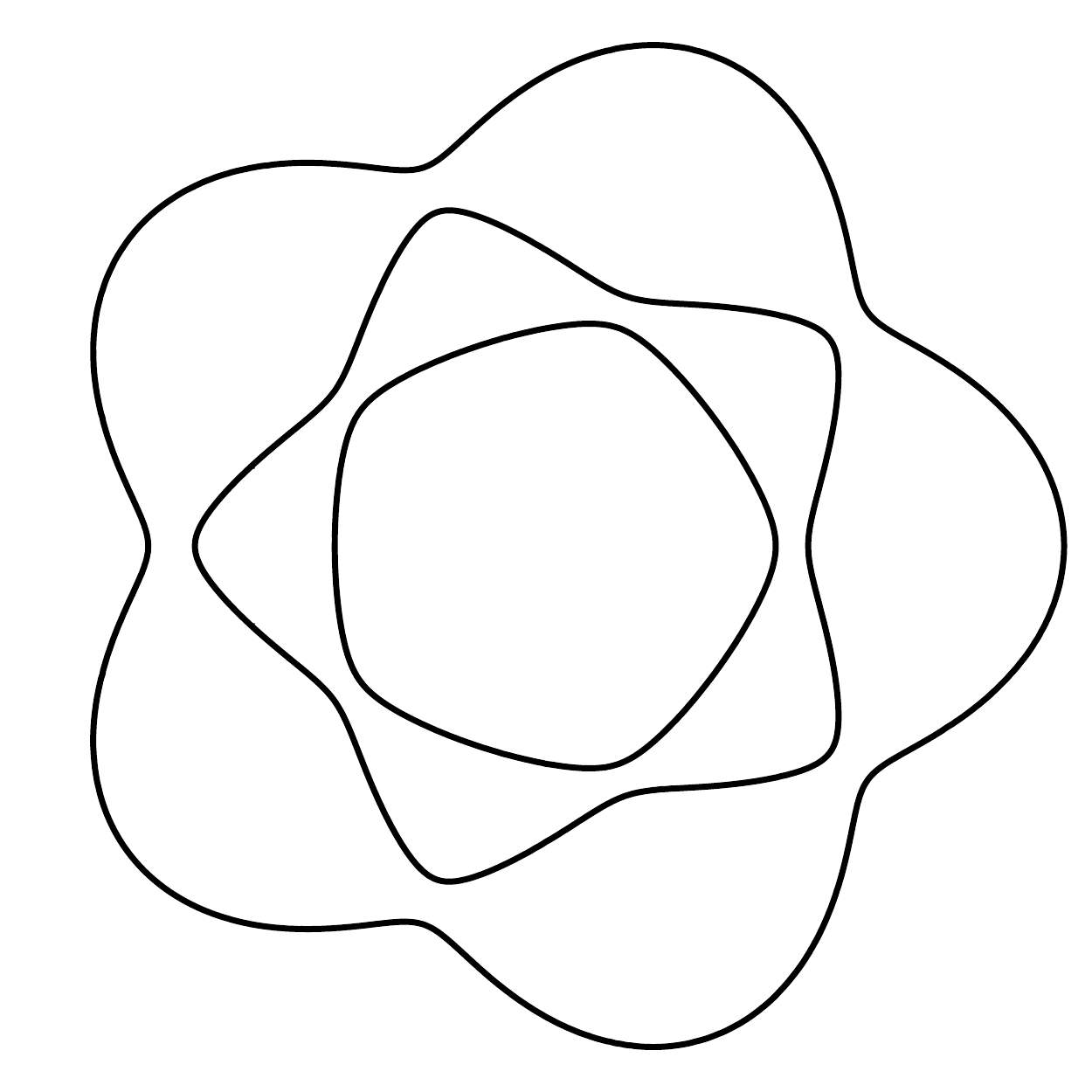} 
\includegraphics[height=1.5in]{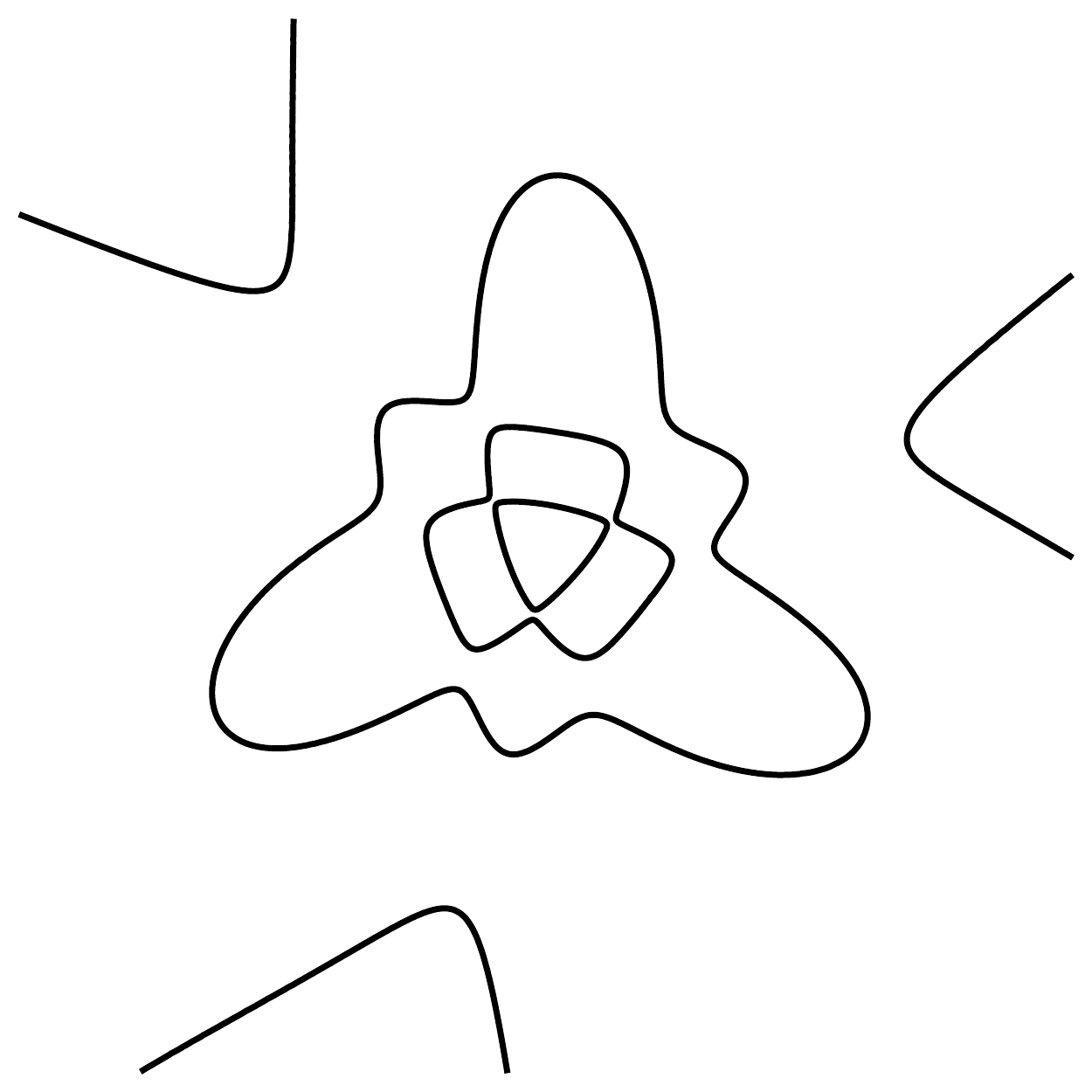}
\includegraphics[height=1.5in]{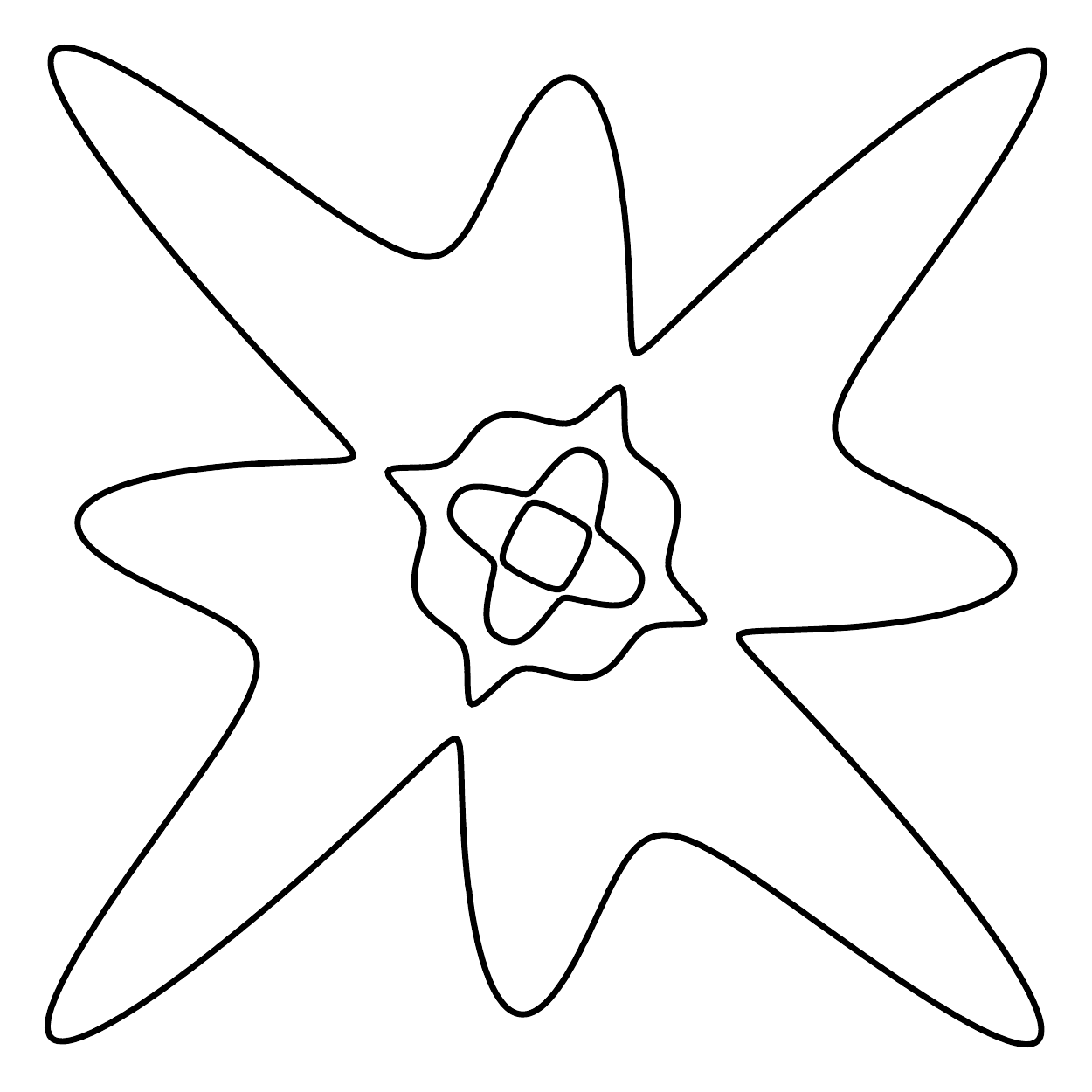} 
\caption{Curves in $\mathcal{H}_d^{C_n}$ for $(n,d) = (5,6)$, $(5,6)$, $(3,7)$, and $(4,8)$ (left to right).}\label{fig:gallery}
\end{figure}

\begin{proof}[Proof of Theorem~\ref{thm:InvHyp}]
We follow the proof of the main theorem in \cite{nuij}. 
Since strict hyperbolicity with respect to $(1,0,0)$ is an open condition on $\R[t,x,y]_d$, it suffices to 
show that $(\mathcal{H}^\circ)_d^{\Gamma}$ is non-empty. An explicit example is 
$t^{\delta}\cdot \prod_{i=1}^{D} (t^2-r_i(x^2+y^2))$ where $D = \lfloor \frac{d}{2} \rfloor$, $\delta\in\{0,1\}$ depending on the parity of $d$, and 
$r_1< \hdots < r_D \in \R_+$. 

The set  $\mathcal{H}_d^{\Gamma} $ is closed in the hyperplane in $\R[t,x,y]_d^{\Gamma}$ of polynomials with coefficient of $t^d$ equal to one.  To see that it is the closure of $(\mathcal{H}^\circ)_d^{\Gamma}$, let $f\in \mathcal{H}_d^{\Gamma} $.
By Lemma~\ref{lem:T_operator}, for $s>0$,  $\Ts{d}(f)$ is strictly hyperbolic with respect to $(1,0,0)$, meaning that 
$\Ts{d}(f)$ belongs to $(\mathcal{H}^\circ)_d^{\Gamma}$.  
The limit at $s=0$ is exactly $f$.
For $s \in \R$, consider the linear map $G_s:\R[t,x,y]_d \to \R[t,x,y]_d$ given by $ G_sf(t,x,y) = f(t,s^2x,s^2y).$ This map preserves hyperbolicity and invariance for any $s \in \R$ as well as strict hyperbolicity when $s\neq0$. 

For  $f \in \mathcal{H}_d^\Gamma$, consider the path in $\R[t,x,y]_d^{\Gamma}$ 
parametrized by $s\mapsto T_{1-s}^dG_sf$ for $s$ in $[0,1]$. 
At $s=1$, this gives $T_{0}^dG_1f = f$ and at $s=0$, this gives $T_{1}^dG_0f = T_{1}^dt^d$, 
which is independent of the choice of $f$. Note that for $s \in [0,1)$, we have 
$T_{1-s}^dG_sf\in \left(\mathcal{H}^\circ\right)_d^\Gamma$.  
The map $[0,1]\times \R[t,x,y]_d\to \R[t,x,y]_d$ given by $(s,f) \mapsto  T_{1-s}^dG_sf$ in the Euclidean topology defines a deformation retraction of both $\mathcal{H}_d^{\Gamma}$ and $(\mathcal{H}^\circ)_d^{\Gamma}$ onto the point $T_{1}^dt^d$. 
\end{proof}

\section{Cyclic weighted shift matrices and invariance}\label{sec:CWS}

One way of producing hyperbolic polynomials that are invariant under the 
actions of $C_n$ or $D_{2n}$ is via cyclic weighted shift matrices. 

  \begin{defn}\label{defn: CWS}
We call $A \in \F^{d \times d}$ a \hi{block cyclic weighted shift matrix  of order n} if
	$ A_{ij} = 0 \text{ if } j-i \neq 1 \mod n. $ 
	Let  $\CWS_{\mathbb{F}}(n,d)$ denote the set of such matrices. 
\end{defn}

\begin{remark}\label{rem:permutation}
The term ``block cyclic weighted shift matrix'' is justified after a permutation of the rows and columns of $A$. 
Consider the permutation of $[d]$ that groups numbers by their image modulo $n$ 
and otherwise keeps them in order. 
For example, for $n=3, d=5$, we consider the permutation $(1,2,3,4,5) \mapsto (1,4,2,5,3)$. 
After this permutation of rows and columns, a block cyclic weighted shift matrix is a block matrix 
consisting blocks of size $r\times s$ where $r,s\in \{\lfloor \frac{d}{n} \rfloor, \lceil \frac{d}{n} \rceil \}$, 
indexed by pairs $(i,j)$ of equivalence classes modulo $n$, where the block corresponding to $(i,j)$ 
is the zero matrix whenever $j-i\neq 1$ modulo $n$. 
\end{remark}
{\begin{example} An arbitrary matrix $A\in \CWS_{\F}(3,5)$ has the form
	\[A =  \begin{pmatrix}
		0 & a_{12} & 0 & 0 & a_{15}\\
		0 & 0 & a_{23} & 0 & 0 \\
		a_{31} & 0 & 0 & a_{34} & 0 \\
		0 & a_{42}& 0 & 0 & a_{45} \\
		0 & 0 & a_{53} & 0 & 0 
	\end{pmatrix}
	\text{ and } \ \ PAP^T = 
	\left(\!\!\begin{array}{cc|cc|c}
	0& 0& a_{12} & a_{15} & 0\\0& 0& a_{42}& a_{45}& 0\\ \hline 0& 0& 0& 0& a_{23}\\0& 
  0& 0& 0&a_{53} \\ \hline a_{31}& a_{34}& 0& 0& 0
	\end{array}\!\!\right)
	\] 
	where $P$ is the permutation matrix representing $(1,2,3,4,5) \mapsto (1,4,2,5,3)$. 
	The curve $\V_{\R}(F_A(1,x,y))$ and numerical range $\W(A)$ for such a matrix 
	are shown in Figure~\ref{fig:numRange}.
\end{example}}

The set of matrices $\CWS_{\C}(n,n)$, also called cyclic weighted shift matrices, have been studied extensively especially with respect to their numerical range \cite{Chien2013B, GTW2013, stout}. In general, the numerical range of any 
 matrix in $\CWS_{\C}(n,d)$ is invariant under multiplication by $n$th roots of unity.
To see this, define the group homomorphism $\rho: C_n \to \GL(\C^d)$ by 
\begin{equation} \label{eq: Omega}
\rho(\rot) = \Omega^* \  \text{ where } \ \Omega:= \diag\big(1, \omega, \omega^2, \ldots, \omega^d\big) \  \text{ and } \  \omega = e^{2\pi i/n}.
\end{equation} 
This induces an action of the cyclic group on $d\times d$ matrices 
by $\rot \cdot A = \Omega^* A \Omega$.  
Note that the $(i,j)$th entry of $\Omega^* A \Omega$ is $\omega^{j-i}A_{ij}$. 
Therefore, for any matrix $A\in \CWS_{\C}(n,d)$, 
the cyclic group acts by scaling by the $n$th root of unity. That is, 
$$\Omega^* A \Omega \ = \  \omega A.$$ 
Since the matrix $\Omega$ is unitary and numerical ranges are invariant under 
conjugation by unitary matrices, we see that the numerical range of 
$A$ is invariant under multiplication by $n$th roots of unity, i.e.~$\W(A) = \W(\omega A)$. 
Chien and Nakazato \cite{Chien2013} show that for matrices $A\in \CWS_{\mathbb{F}}(n,n)$, the 
polynomials $F_A$ are invariant under the cyclic group (for $\mathbb{F}=\C$) and dihedral group (for $\mathbb{F}=\R$). 
Here we generalize this observation to matrices of arbitrary size. 

To do this it is useful to rewrite the polynomial $F_A$ as
\[
F_A(t,x,y)  \ = \ \det\left(t I + \tfrac{1}{2}(x+iy)A^*+ \tfrac{1}{2}(x-iy)A\right).
\]
This is particular convenient as the group $C_n$ acts diagonally 
on the linear forms $t$, $x+iy$, and $x-iy$. Specifically, each action fixes $t$ and we have
\begin{align*}
\rot\cdot(x+iy) & =  e^{-2\pi i /n}(x+iy) & \refl\cdot(x+iy) & = x-iy\\
\rot\cdot(x-iy) & = \  e^{2\pi i /n}(x-iy) & \refl\cdot(x-iy) & = x+iy.
\end{align*}

\begin{prop}\label{prop:CWS=>invariant} 
For any $A\in \C^{d \times d}$, the polynomial $F_A$ is hyperbolic with respect to $(1,0,0)$. 
If $A\in \CWS_{\C}(n,d)$, then $F_A $ belongs to $\HH^{C_n}_d$ and if $A\in \CWS_{\R}(n,d)$, then $F_A$ belongs to $ \HH^{D_{2n}}_d$.
\end{prop}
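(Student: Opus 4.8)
The plan is to verify the three assertions of Proposition~\ref{prop:CWS=>invariant} in turn, using the factored form $F_A(t,x,y) = \det(tI + \tfrac12(x+iy)A^* + \tfrac12(x-iy)A)$ and the diagonal action of $C_n$ on the linear forms $t$, $x+iy$, $x-iy$ recorded just above the statement. Hyperbolicity with respect to $(1,0,0)$ is immediate and was already explained in the introduction: the pencil $tI + x(A+A^*)/2 + y(A-A^*)/2i$ has Hermitian coefficient matrices and the coefficient of $t$ is the positive definite identity, so $F_A = \det$ of this pencil is hyperbolic with respect to $(1,0,0)$; moreover $F_A(1,0,0) = \det(I) = 1$, which is the normalization built into $\HH^\Gamma_d$.

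For the cyclic invariance, suppose $A \in \CWS_\C(n,d)$. The key computation is the one already made in the discussion of numerical ranges: with $\Omega = \diag(1,\omega,\ldots,\omega^d)$ and $\omega = e^{2\pi i/n}$, the matrix $\Omega^* A \Omega$ has $(i,j)$ entry $\omega^{j-i}A_{ij}$, and since $A_{ij}=0$ unless $j-i\equiv 1 \bmod n$, every surviving entry is multiplied by $\omega$, so $\Omega^* A \Omega = \omega A$ and correspondingly $\Omega^* A^* \Omega = \bar\omega A^*$. Now apply $\rot$ to $F_A$: since $\rot$ fixes $t$, sends $x+iy \mapsto \bar\omega(x+iy)$ and $x-iy \mapsto \omega(x-iy)$, we get
\[
(\rot\cdot F_A)(t,x,y) = \det\!\left(tI + \tfrac12\bar\omega(x+iy)A^* + \tfrac12\omega(x-iy)A\right).
\]
Conjugating the argument by the unitary $\Omega$ leaves the determinant unchanged, and by the displayed identities this conjugation replaces $A^*$ by $\bar\omega^{-1}A^* = \omega A^*$ and $A$ by $\bar\omega A$... more carefully, I would insert $\Omega\Omega^* = I$ and use $\Omega^*(\bar\omega A^*)\Omega = \bar\omega(\Omega^*A^*\Omega) = \bar\omega\cdot\bar\omega A^* = \bar\omega^2 A^*$ — so I should instead conjugate by $\Omega$ (not $\Omega^*$), giving $\Omega A^* \Omega^* = \omega A^*$ and $\Omega A \Omega^* = \bar\omega A$, which exactly cancels the scalars $\bar\omega$ and $\omega$ introduced by $\rot$. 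Hence $\rot\cdot F_A = F_A$, and since $C_n = \langle\rot\rangle$, $F_A$ is $C_n$-invariant; combined with the first paragraph, $F_A \in \HH^{C_n}_d$.

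For the dihedral case, suppose $A \in \CWS_\R(n,d)$. Then $A^* = A^T$, and I must additionally check invariance under $\refl$, which fixes $t$ and swaps $x+iy \leftrightarrow x-iy$. Applying $\refl$ to $F_A$ gives $\det(tI + \tfrac12(x-iy)A^* + \tfrac12(x+iy)A) = \det(tI + \tfrac12(x+iy)A + \tfrac12(x-iy)A^T)$; since $A$ is real, $A = \overline{A^T} = (A^T)^*$, so this is precisely $F_{A^T}(t,x,y)$. Because $\det(M) = \det(M^T)$ and transposition swaps $A \leftrightarrow A^T$ inside the pencil while fixing $tI$, we have $F_{A^T} = F_A$, so $\refl\cdot F_A = F_A$. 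Together with $\rot$-invariance (the real case is a special case of the complex one just handled), $F_A$ is invariant under $D_{2n} = \langle\rot,\refl\rangle$, so $F_A \in \HH^{D_{2n}}_d$.

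The only genuinely delicate point is bookkeeping the direction of the conjugation by $\Omega$ versus $\Omega^*$ and tracking complex conjugates of $\omega$ on the two terms $A$ and $A^*$; getting a sign or a conjugate wrong would make the scalars fail to cancel. I would write out the $(i,j)$-entry identity $\Omega^* A \Omega = \omega A$ explicitly and then state $\Omega A \Omega^* = \bar\omega A$ as its inverse form, and likewise $\Omega A^* \Omega^* = \omega A^*$ (taking conjugate transpose of the previous), so that after substituting these into the $\rot$-transformed determinant the factors $\bar\omega\cdot\omega$ and $\omega\cdot\bar\omega$ visibly reduce to $1$. Everything else is a one-line determinant manipulation.
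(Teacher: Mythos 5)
Your proof is correct and follows essentially the same approach as the paper: hyperbolicity from the positive definite coefficient of $t$, rotation invariance via the identity $\Omega^*A\Omega = \omega A$ and the invariance of the determinant under unitary conjugation, and reflection invariance via $A^*=A^T$ together with $\det M = \det M^T$. The only cosmetic difference is that you conjugate the transformed pencil by $\Omega$ from the outside, whereas the paper factors $\Omega^*(\cdot)\Omega$ out of the pencil after substituting $\omega A = \Omega^*A\Omega$; these are the same computation, and your mid-paragraph self-correction about which of $\Omega$ versus $\Omega^*$ to use lands on the right answer.
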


Here $\HH^{\Gamma}_d$ denotes the set of invariant hyperbolic polynomials as in Section~\ref{sec:Nuij}. 

\begin{proof} 
By definition, the polynomial $F_A$ is the determinant of a linear matrix pencil that equals the identity matrix  
at $(1,0,0)$. The hyperbolicity of $F_A$ then follows from the fact that all of the eigenvalues are real. 
For invariance, it suffices to check that $\rot \cdot F_A=F_A$ for $A\in \CWS_{\C}(n,d)$ and 
$\refl \cdot F_A=F_A$ for $A\in \CWS_{\R}(n,d)$. Following \cite{Chien2013} and using the simplification of $F_A$ above,  
we apply rotation to give 
\begin{align*}
\rot \cdot F_A(t,x,y) 
&= \det\left(t I + \tfrac{1}{2}(x+iy)(\omega A)^*+ \tfrac{1}{2}(x-iy)(\omega A)  \right)\\
&= \det \left( tI  +\tfrac{1}{2}(x+iy)(\Omega^* A \Omega)^* +\tfrac{1}{2}(x-iy)(\Omega^* A \Omega)\right)  \\
 &= \det(\Omega^*)\cdot \det \left( tI + \tfrac{1}{2}(x+iy)A^*+ \tfrac{1}{2}(x-iy) A  \right) \cdot\det(\Omega)  = F_A(t,x,y).
\end{align*}
Similarly, if $A$ has real entries then  $A^* = A^T$ and
\begin{align*}
	\refl \cdot F_A(t,x,y) = F_A(t,x,-y) 
  	&= \det\left(t I  + \tfrac{1}{2}(x-iy)A^T+ \tfrac{1}{2}(x+iy)A\right)  \\
    &= \det\left(\left(t I + \tfrac{1}{2}(x+iy)A^T+ \tfrac{1}{2}(x-iy)A \right)^T\right) =F_A(t,x,y).
\end{align*}
\end{proof}
Chien and Nakazato asked the converse question and provided a positive answer for the case when $d=n=3,4$. The authors of \cite{patton2013, patton2011} studied rotational symmetry of the numerical range of matrices of size $d=3,4$. 
We will provide a converse in the case $d=qn$ in Theorem~\ref{thm: num range}. 
The difficulty for arbitrary $d$ comes from the fact that for many values of $d$, all forms in
$(\HH^\circ)_d^{\Gamma}$ define curves with complex singularities, as discussed in Section~\ref{sec:questions}.

\section{A constructive proof for smooth curves}
\label{sec: d=qn smooth}

In this section we aim to prove Theorem~\ref{thm:d=qn}, but with some added assumptions about of a curve in $\HH_d^{\Gamma}$ and an interlacer. Throughout Sections~\ref{sec: d=qn smooth} and \ref{sec: d=qn dihedral} we will assume that
	\begin{enumerate}
		\item[\textit{A1.}] $f \in (\HH^\circ)_d^{\Gamma}$  and $\V_{\C}(f)$ is smooth,
		\item[\textit{A2.}] $g \in \HH_{d-1}^{\Gamma}$ interlaces $f$ with respect to $(1,0,0)$,
		\item[\textit{A3.}] $\V_{\C}(f)$ and $\V_{\C}(g)$ intersect transversely, and
		\item[\textit{A4.}] $\big|\V_{\C}(f,g,t)\big| =
										\begin{cases}
										0 &\text{if $n$ is odd,}\\
										d &\text{if $n$ is even.}
										\end{cases}$
	\end{enumerate}
Specifically, we prove the following theorem.
 \begin{thm}\label{thm: d=qn transverse}
	Let $d=qn$ for some $q \in \Z_{+}$. Let $f$ and $g$ satisfy (A1)--(A4). 
	\begin{itemize}
		\item[(a)] If $\Gamma = C_n$, then there exists a  matrix $A \in \CWS_{\C}(n,d)$ so that $f=F_A$.
		\item[(b)] If $\Gamma = D_{2n}$, then there exists a  matrix $A \in \CWS_{\R}(n,d)$ so that $f=F_A$.
	\end{itemize}
\end{thm}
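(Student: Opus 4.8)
The plan is to reduce the statement to the existence of a suitably symmetric definite Hermitian determinantal representation of $f$, to build such a representation from the interlacer $g$, and then to force the block structure out of the $\Gamma$‑invariance. Recall from Section~\ref{sec:CWS} that $F_A(t,x,y)=\det\big(tI+\tfrac12(x+iy)A^*+\tfrac12(x-iy)A\big)$ and that $A\in\CWS_\C(n,d)$ exactly when $\Omega^*A\Omega=\omega A$ for the diagonal unitary $\Omega$ of \eqref{eq: Omega}. So part~(a) amounts to producing a Hermitian pencil $\mathcal{A}(t,x,y)=tI+\tfrac12(x+iy)A^*+\tfrac12(x-iy)A$ with $\det\mathcal{A}=f$ together with a unitary $U$ satisfying $U^{\,n}=I$, $\mathcal{A}(\rot\cdot u)=U\mathcal{A}(u)U^*$, and — this is the delicate point — each $n$‑th root of unity occurring as an eigenvalue of $U$ with multiplicity exactly $q=d/n$. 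Given such data one diagonalizes $U$ in an orthonormal basis, sorts the basis by eigenvalue of $U$, and reads off from the intertwining relation (using that $\rot$ fixes $t$ and scales $x\pm iy$ by $\omega^{\mp1}$) that $A$ is supported on the super‑diagonal blocks; the balanced multiplicities are precisely what let the sorted basis realize the standard grading $i\mapsto i\bmod n$ on $[d]$, so that $A$ lands in $\CWS_\C(n,d)$ of size $d$. For part~(b) one asks in addition for a unitary $U'$ with $(U')^2=I$, $U'U(U')^*=U^{-1}$, and $\mathcal{A}(\refl\cdot u)=U'\mathcal{A}(u)(U')^*$; since $\refl$ interchanges $x+iy$ and $x-iy$ this gives $(U')^*AU'=A^*$, and combining this involution with the eigenspace decomposition of $U$ lets one choose bases in which every block of $A$ is real.

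To build $\mathcal{A}$, I would invoke the construction of Plaumann--Vinzant \cite{PV} (cf.\ \cite{HeltonVinnikov07}). Since $f\in(\HH^\circ)_d^{\Gamma}$ is smooth by (A1), the interlacer $g\in\HH_{d-1}^{\Gamma}$ of (A2) determines a line bundle $\mathcal L$ on $X=\V_\C(f)$, and under the transversality and position hypotheses (A3), (A4) this bundle satisfies the cohomological conditions ($h^0(\mathcal L)=d$, $h^1(\mathcal L)=0$) that guarantee a definite Hermitian representation $\mathcal{A}$ of $f$ with $\C^d\cong H^0(X,\mathcal L)$ as its space of rows; moreover $\mathcal{A}$ is unique up to unitary conjugation once the pair $(f,g)$ — equivalently $\mathcal L$ — is fixed. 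Now invariance enters: for $\gamma\in\Gamma$ the pencil $\mathcal{A}\circ\gamma$ is again a normalized definite Hermitian representation of $f\circ\gamma=f$, with associated bundle $\gamma^*\mathcal L\cong\mathcal L$ because $g\circ\gamma=g$; uniqueness then yields a unitary $U_\gamma$ with $\mathcal{A}(\gamma\cdot u)=U_\gamma\,\mathcal{A}(u)\,U_\gamma^*$. Smoothness of $X$ makes $\mathcal{A}$ an irreducible pencil (a common invariant subspace of the $\mathcal{A}(u)$ would block‑diagonalize $\mathcal{A}$ and factor $f$), so each $U_\gamma$ is determined up to a scalar; rescaling gives $U_\rot^{\,n}=I$, and for part~(b) also $U_\refl^2=I$ together with $U_\refl U_\rot U_\refl^*=U_\rot^{-1}$ — where for $n$ even one must rule out the sign that could a priori obstruct this, which is where the real structure of the Hermitian representation and (A4) are used. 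This supplies everything in the first paragraph except the balanced multiplicities.

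The heart of the argument is thus to show that, as a representation of $C_n$, the space $\C^d\cong H^0(X,\mathcal L)$ is $q$ copies of the regular representation; equivalently $\tr(U_\rot^{\,k})=0$ for $1\le k\le n-1$. I would compute this character by the holomorphic Lefschetz fixed‑point formula (a Chevalley--Weil computation) for $\rot^{\,k}$ acting on $(X,\mathcal L)$, using $H^1(X,\mathcal L)=0$; alternatively one can match the block structure of $\det\mathcal{A}$ against the known invariant‑theoretic shape of $f$. The fixed points on $\P^2$ of a nontrivial rotation $\rot^{\,k}$ are $[1{:}0{:}0]\notin X$ and the two circular points, together with the whole line $\{t=0\}$ in the exceptional case $n$ even and $k=n/2$. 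Hypothesis (A4) is exactly what kills the contributions in the case $n$ odd, and in the case $n$ even it pins down the $\rot^{\,n/2}$‑weights of $\mathcal L$ along the $d$ points of $X\cap\{t=0\}$ so that the fixed‑point sum still vanishes; the assumption $d=qn$ is essential here. With balanced multiplicities established, part~(a) follows, and feeding the $\refl$‑intertwiner through the eigenspace decomposition as in the first paragraph, together with the real structure of $\mathcal{A}$, yields a representation by a matrix in $\CWS_\R(n,d)$, proving part~(b).

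I expect the balanced‑multiplicity statement — and its interaction with the position hypothesis (A4), including the reason (A4) is phrased differently for $n$ odd and $n$ even — to be the main obstacle; the reality claim in part~(b) and the check that $\gamma\mapsto U_\gamma$ can be taken to be an honest rather than merely projective action are the secondary technical points. (The removal of hypotheses (A1)--(A4) to recover Theorem~\ref{thm:d=qn} in full is a separate matter, carried out by a deformation argument based on Theorem~\ref{thm:InvHyp} and the operator $T_s$, and is not part of the present proof.)
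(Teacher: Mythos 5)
Your proposal takes a genuinely different route from the paper's. The paper's proof (via Constructions~\ref{con: d=qn transverse} and~\ref{con: d=qn dihedral}) is an explicit Dixon-style construction: one builds the adjugate matrix $G=(g_{ij})$ row by row, with $g_{1j}\in\es{1-j}_{d-1}$ vanishing on an orbit-invariant half $S$ of $\V_\C(f,g)$ (existence of such forms being the dimension count of Lemmas~\ref{lem: dim eigenspaces d=qn}, \ref{lem: factor of t}, \ref{lem: d=qn cyclic basis}), fills in the remaining entries by the invariant Max Noether lemma (Lemma~\ref{lem: maxnoether}), takes $M=\adj{G}/f^{d-2}$, and reads the block cyclic structure directly off the eigenspace grading of $M$'s entries before block-Cholesky normalization. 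You instead invoke existence and uniqueness of definite Hermitian pencils abstractly, extract a unitary intertwiner $U_\gamma$ from $\Gamma$-invariance, and propose to prove ``balanced multiplicities'' of $U_{\rot}$ (that $\C^d\cong H^0(X,\mathcal L)$ is $q$ copies of the regular $C_n$-representation) by a holomorphic Lefschetz / Chevalley--Weil count. That is a legitimate and more conceptual strategy in principle, and the balanced-multiplicity invariant is indeed the right thing to control; the paper establishes the same fact implicitly through the dimension counts and the permutation-and-block-Cholesky step.

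However there are two genuine gaps. First, your uniqueness claim — ``$\mathcal{A}$ is unique up to unitary conjugation once the pair $(f,g)$, equivalently $\mathcal L$, is fixed'' — is not correct as stated: the pair $(f,g)$ does \emph{not} determine $\mathcal L$. The line bundle depends on the choice of split $\V_\C(f,g)=S\cup\overline S$, and different splits produce non-equivalent definite representations. Consequently the inference ``$\gamma^*\mathcal L\cong\mathcal L$ because $g\circ\gamma=g$'' does not follow; what is actually needed is a $\rot$-invariant (and, for part (b), $\refl\circ\conj$-compatible) choice of $S$, whose existence is exactly the content of Corollaries~\ref{cor: orbit and xyconj orbit distinct} and~\ref{cor: xyconj orbit and uvconj orbit distinct} via Lemma~\ref{lem: xy ratio real}, and this is a substantive step your argument omits. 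Second, the balanced-multiplicity statement — which you yourself identify as the crux — is asserted rather than proved. For $n$ odd the absence of fixed points of $\rot^k$ on $X$ does give $\tr(U_{\rot}^{k})=0$ via Lefschetz, but (contrary to your attribution to (A4)) this follows from smoothness (A1) together with $d=qn$: a direct check of the partials of an invariant form of degree $qn$ at the circular points $[0{:}1{:}\pm i]$ shows that for $n\ge 3$ those points can only lie on a \emph{singular} invariant curve, so (A1) already keeps them off $X$. For $n$ even and $k=n/2$, the fixed locus is the entire line $\{t=0\}$, meeting $X$ in the $d$ transverse points supplied by (A4); there one must actually compute the $d$ local contributions $\tr(\rot^{n/2}|\mathcal L_p)/(1-(d\rot^{n/2}|_{T_pX})^{-1})$ and show they cancel, which requires pinning down the equivariant weights of $\mathcal L$ at those points and is nontrivial. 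Until both of these points are filled in, the proposal establishes the strategy but not the theorem.
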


In order to construct the matrix $A$ and corresponding determinantal representation of $f$, 
we first construct the adjugate of this matrix, which will be a $d\times d$ matrix of forms of degree $d-1$ 
that has rank $\leq 1$ on the curve $\V_{\C}(f)$. Following \cite{Dixon, LP, PV},
we take $g$ to be the $(1,1)$ entry of this adjugate matrix and fill in the first row and column 
to vanish on complimentary sets of points in $\V_{\C}(f)$ and $\V_{\C}(g)$.

The next lemma is a general statement about complex points in the intersection of $\V_{\C}(f)$ and $\V_{\C}(g)$. 
This will allow us to split these intersection points into disjoint sets determined by orbits under the action of rotation.
\begin{lem}\label{lem: xy ratio real}
Let $f$ and $g$ satisfy Assumptions (A1)--(A4). Any point $[t:x:y]$ in $\V_{\C}(f,g)$  
satisfies $|x+iy| \neq |x-iy|$.
\end{lem}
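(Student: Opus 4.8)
The plan is to reduce the statement to two independent facts and combine them. Since $\V_{\C}(f,g)\subseteq\V_{\C}(f)$, it suffices to prove: (i) any point of $\V_{\C}(f)$ at which $|x+iy|=|x-iy|$ must be a \emph{real} point of $\P^2(\R)$, using only that $f$ is hyperbolic with respect to $(1,0,0)$; and (ii) under (A1)--(A3) the real varieties $\V_{\R}(f)$ and $\V_{\R}(g)$ are disjoint, so $\V_{\C}(f,g)$ contains no real points at all. Combining (i) and (ii) immediately gives the lemma. (Assumption (A4) will not be needed.)

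For (i): let $P=[t:x:y]\in\V_{\C}(f)$ with $|x+iy|=|x-iy|=:r$. If $r=0$ then $x=y=0$, so $P=[1:0:0]$, impossible since $f(1,0,0)=1$; hence $r>0$. Writing $x+iy=re^{i\phi_1}$ and $x-iy=re^{i\phi_2}$ with $\phi_1,\phi_2\in\R$, a direct computation gives $(x,y)=\mu\,(\cos\beta,\sin\beta)$ with $\mu:=re^{i(\phi_1+\phi_2)/2}\in\C^{\times}$ and $\beta:=(\phi_1-\phi_2)/2\in\R$. Thus $P=[\,t/\mu:\cos\beta:\sin\beta\,]$ and $f(t/\mu,\cos\beta,\sin\beta)=0$. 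Since $f$ is hyperbolic with respect to $(1,0,0)$ and $(\cos\beta,\sin\beta)\in\R^2$, the univariate polynomial $\lambda\mapsto f(\lambda,\cos\beta,\sin\beta)$ is real-rooted and not identically zero (its leading coefficient is $f(1,0,0)=1$), so its root $t/\mu$ is real, and therefore $P\in\P^2(\R)$.

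For (ii): I would work with the degree-$(2d-2)$ form $W:=g\,\partial_t f-f\,\partial_t g$. For each real direction $(a,b)\neq(0,0)$, the restriction $W(\lambda,a,b)$ is the Wronskian-type polynomial $q\,p'-p\,q'$ of the univariate pair $p(\lambda)=f(\lambda,a,b)$, $q(\lambda)=g(\lambda,a,b)$; since $g$ interlaces $f$ (A2), $q$ interlaces $p$, so by the classical fact that the Wronskian of an interlacing pair of real-rooted polynomials has no sign change on $\R$, and since its leading coefficient is $d-(d-1)=1>0$, this polynomial is $\geq 0$. By continuity $W\geq 0$ on all of $\R^3$. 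Now suppose $P=[t_0:a:b]\in\V_{\R}(f)\cap\V_{\R}(g)$; then $(a,b)\neq(0,0)$ (else $P=[1:0:0]\notin\V(f)$), and $W$ vanishes at $(t_0,a,b)$, which is therefore a global minimum of $W$, so $\nabla W(t_0,a,b)=0$. Expanding $\nabla W=(\partial_t f)\nabla g+g\,\nabla(\partial_t f)-(\partial_t g)\nabla f-f\,\nabla(\partial_t g)$ and using $f(P)=g(P)=0$ yields $\alpha\,\nabla g(P)=\beta\,\nabla f(P)$, where $\alpha:=\partial_t f(P)$ and $\beta:=\partial_t g(P)$. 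By (A1) the curve $\V_{\C}(f)$ is smooth, so $\nabla f(P)\neq 0$, and $f$ is strictly hyperbolic, so $p$ has simple roots and $\alpha=p'(t_0)\neq 0$. Hence $\nabla g(P)$ is a scalar multiple of $\nabla f(P)$: if $\beta\neq 0$ the two curves are smooth at $P$ with the same tangent line, and if $\beta=0$ then $P$ is a singular point of $\V_{\C}(g)$ — either way contradicting transversality of $\V_{\C}(f)\cap\V_{\C}(g)$ at $P$ (A3). So $\V_{\R}(f)\cap\V_{\R}(g)=\emptyset$, and with (i) the lemma follows.

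The only nontrivial input is the classical sign-definiteness of the Wronskian of an interlacing pair; granting that, the argument for (ii) is essentially forced. The step I expect to require the most care in the write-up is the final sentence of (ii): one must invoke the precise meaning of ``intersect transversely'' — that every intersection point is a smooth point of both curves with distinct tangent directions — so that parallel or vanishing gradients are genuinely excluded.
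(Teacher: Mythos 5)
Your proof is correct and follows the same two-step structure as the paper's: reduce to showing (i) that $|x+iy|=|x-iy|$ forces $[t:x:y]$ to be real, via hyperbolicity of $f$, and (ii) that (A1)--(A3) exclude any real point from $\V_{\C}(f,g)$. For (i), the paper splits into the cases $y\neq 0$ (computing $x/y\in\R$) and $y=0$, whereas your angular parametrization $(x,y)=\mu(\cos\beta,\sin\beta)$ with $\mu\in\C^{\times}$ and $\beta\in\R$ handles both uniformly; this is a cosmetic but slightly cleaner variant of the same computation. For (ii), the paper simply cites \cite[Proposition~4.3]{PV} for the fact that a real intersection point of an interlacing pair is necessarily non-transverse, whereas you reprove that fact from scratch: sign-definiteness of the Wronskian-type form $W=g\,\partial_t f - f\,\partial_t g$ forces a real common zero $P$ to be a critical point of $W$, and expanding $\nabla W(P)$ under $f(P)=g(P)=0$ shows $\nabla f(P)$ and $\nabla g(P)$ are linearly dependent (using smoothness of $\V_{\C}(f)$ and strict hyperbolicity to rule out $\nabla f(P)=0$ and $\partial_t f(P)=0$), contradicting (A3). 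That is exactly the content of the cited proposition, and your derivation is sound. You are also right that (A4) plays no role in this lemma.
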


\begin{proof}
For the sake of contradiction, suppose that $|x+iy| = |x-iy|$. If $y\neq 0$, 
then $x+iy = z(x-iy)$ with $|z|=1$ and $z\neq 1$. Solving for $x/y$ gives 
	\[
\frac{x}{y} \ =  \frac{-i(1+z)}{1-z} \cdot\frac{(1-\overline{z})}{(1-\overline{z})} \ = \  \frac{-i(1-z+\overline{z} -z\overline{z})}{| 1-z|^2} \ = \  \frac{-i(-z+\overline{z})}{| 1-z|^2} = \  \frac{2 \cdot {\rm Im}(z)}{| 1-z|^2} \ \in\  \R.
\]
By homogeneity of $f$, $f(t/y,x/y,1)=0$, meaning that $t/y$ is a root of the polynomial $f(\lambda,x/y,1)\in \R[\lambda]$ where $x/y \in \R$ is fixed. The hyperbolicity of $f$ then implies that $t/y \in \R$.  Since both $x/y$ and $t/y$ are real,
the point $[t:x:y]$ belongs to $\P^2(\R)$. 
By \cite[Proposition~4.3]{PV}, any real intersection point of $\V_{\C}(f)$ and $\V_{\C}(g)$ is non-transverse, contradicting (A3). 

Similarly, if $y = 0$, then $x\neq 0$ since $f(1,0,0) = 1$. Then $f(t/x,1,0)=0$, implying that $t/x\in \R$ and 
$[t:x:y]=[t:x:0]$ belongs to $\P^2(\R)$, again contradicting (A3). \end{proof}

\begin{cor} \label{cor: orbit and xyconj orbit distinct}
	Let $f$ and $g$ satisfy Assumptions (A1)--(A4). Then each $C_n$-orbit in $\V_{\C}(f,g)$ is disjoint from its image under conjugation.
\end{cor}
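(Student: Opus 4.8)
The plan is to deduce the corollary directly from Lemma~\ref{lem: xy ratio real} by examining how the $C_n$-action and complex conjugation act on the ratio $(x+iy):(x-iy)$. First I would set up coordinates: for a point $P = [t:x:y]\in\V_{\C}(f,g)$, the action of $\rot$ fixes $t$ and scales $x+iy$ by $\omega^{-1}$ and $x-iy$ by $\omega$ (as recorded in the display before Proposition~\ref{prop:CWS=>invariant}), so $\rot^k\cdot P$ has homogeneous coordinates in which $x+iy$ is multiplied by $\omega^{-k}$ and $x-iy$ by $\omega^{k}$. In particular $|x+iy|$ and $|x-iy|$ are unchanged along a $C_n$-orbit, and hence so is the quantity $|x+iy|/|x-iy|$ (note $x-iy\neq 0$ at such a point, since $x-iy=0$ would force $|x+iy|=|x-iy|$, contradicting Lemma~\ref{lem: xy ratio real}). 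Conjugation, on the other hand, swaps $x+iy$ and $x-iy$ (it sends $[t:x:y]$ to $[\bar t:\bar x:\bar y]$, and since $f,g$ are real, the conjugate point again lies in $\V_{\C}(f,g)$), so it sends the ratio $|x+iy|/|x-iy|$ to its reciprocal.

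Now the key step: suppose for contradiction that some $C_n$-orbit $\OO$ meets its conjugate $\bar{\OO}$, say $\rot^k\cdot P = \overline{\rot^j\cdot P'}$ for points $P,P'$ in the orbit — equivalently, some point $Q\in\OO$ satisfies $\bar Q\in\OO$. Along $\OO$ the ratio $r := |x+iy|/|x-iy|$ is a well-defined constant; applying conjugation to $Q$ gives a point of $\OO$ with ratio $1/r$, but that point's ratio must also equal $r$, forcing $r^2 = 1$, i.e. $|x+iy| = |x-iy|$ at $Q$. This contradicts Lemma~\ref{lem: xy ratio real}, which says no point of $\V_{\C}(f,g)$ can have $|x+iy|=|x-iy|$. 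Hence no $C_n$-orbit meets its conjugate.

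I would be careful about one degenerate possibility in writing this up: a priori a $C_n$-orbit could have fewer than $n$ elements (if $P$ is fixed by some $\rot^k$), but this does not affect the argument, since the ratio $r$ is still constant on the orbit regardless of its cardinality. The main (minor) obstacle is just bookkeeping — making sure the conjugate point genuinely lies back in $\V_{\C}(f,g)$ (which uses that $f$ and $g$ have real coefficients) and that $x-iy\neq 0$ on the orbit so that the ratio is well-defined — but both are immediate. Essentially the corollary is a one-line consequence of Lemma~\ref{lem: xy ratio real} once one observes that the $C_n$-action preserves $|x\pm iy|$ while conjugation inverts their ratio.
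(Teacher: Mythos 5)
Your argument is in substance the same as the paper's: both reduce the corollary to Lemma~\ref{lem: xy ratio real} by tracking how rotation and conjugation act on the pair $(x+iy,\,x-iy)$, and your invariant ratio $r=|x+iy|/|x-iy|$ is just a repackaging of the paper's step of equating the projective coordinates of $\rot^{\ell}\cdot p$ and $\overline{p}$ and taking moduli. One side remark is incorrect, however: $x-iy=0$ does \emph{not} force $|x+iy|=|x-iy|$ — it forces $|x-iy|=0$ while $|x+iy|$ can be anything (for instance $[0:1:-i]$ has $x-iy=0$ and $x+iy=2$) — so Lemma~\ref{lem: xy ratio real} does not rule out points of $\V_{\C}(f,g)$ on the line $x-iy=0$, and your ratio $r$ is not a priori well defined at such a point. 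The gap is easy to close: either let $r$ take values in $[0,\infty]$ (conjugation still inverts it, and $r=1/r$ fails for $r\in\{0,\infty\}$ just as for any $r\neq 1$, since $[1:0:0]\notin\V_{\C}(f)$ rules out $x+iy=x-iy=0$), or avoid division altogether as the paper does by cross-multiplying the last two coordinates of $\rot^{\ell}\cdot p=\overline{p}$ to get $\omega^{-\ell}(x+iy)\overline{(x+iy)}=\omega^{\ell}(x-iy)\overline{(x-iy)}$ and then taking moduli.
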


\begin{proof}
Let $\OO$ be a $C_n$-orbit of points in $\V_{\C}(f,g)$ and suppose $p = [t:x:y] \in \OO\cap \overline{\OO}$. 
Then  $\rot^{\ell}\cdot p= \overline{p}$ for some $\ell \in [n]$.
If $\rot^{\ell}\cdot p=[t:a:b] $, then $[t:\omega^{-\ell}(x+iy): \omega^{\ell}(x-iy)]=[t:a+ib:a-ib]$.
In particular, if $\rot^{\ell}\cdot p$ equals $\overline{p}$,  then 
\begin{equation}\label{eq: lem xy ratio real}
\left[t:\omega^{-\ell}\left(x+iy\right):\omega^{\ell}\left(x-iy\right)\right]
=
[\overline{t}:\overline{x}+i\overline{y}:\overline{x}-i\overline{y}] 
=
[\overline{t}:\overline{x-iy}:\overline{x+iy}].
\end{equation} 
The cross ratio the the last two coordinates gives 
$\overline{x-iy}\cdot \omega^{\ell}\left(x-iy\right) = \overline{x+iy}\cdot \omega^{-\ell}\left(x+iy\right)$.
Taking the modulus of both sides shows that $|x-iy| = |x+iy|$, contradicting Lemma~\ref{lem: xy ratio real}. 
\end{proof}

Define the linear map 
	\begin{equation} \label{eq: phimap} \varphi : \C[t,x,y] \to \C[t,x,y] \ \  \text{ where }  \ \ h(t,x,y) \mapsto h(\rot^{-1}\cdot (t,x,y)).
	\end{equation}
	The eigenvectors of this map have the form $t^l(x+iy)^j(x-iy)^k$, each with eigenvalue $\omega^{j-k}$. 
	The restriction $\varphi|_d$ of $\varphi$ to $\C[t,x,y]_d$ has a finite number of eigenvectors equal to $\dim_{\C}(\C[t,x,y]_d) = \binom{d+2}{2}$. For  each $\ell = 0, 1, \ldots, n-1$, denote by 
	\begin{equation} \label{eq: esl}
		\esl_d = \left\{ f \in \C[t,x,y]_d : \varphi(f) = \omega^{\ell} f\right\}
	\end{equation}
	the eigenspace of the restriction $\varphi |_d$ associated to eigenvalue $\omega^{\ell}$. Notice $\es{0}_d = \C[t,x,y]_d^{C_n}$ and we can write $\C[t,x,y]_d$ as a decomposition of eigenspaces 
$$\C[t,x,y]_d = \bigoplus_{\ell=0}^{n-1} \es{\ell}_{d}.$$ 
We will be interested in  the dimension of each eigenspace. In particular, for $d=qn$, we want at least $q$ elements in each eigenspace $\es{\ell}_{d-1}$ in order to choose linearly independent set of elements in $\C[t,x,y]_{d-1}$ for the first row of the adjugate matrix we wish to construct.

\begin{lem} \label{lem: dim eigenspaces d=qn}
	Let $d=qn$ for some $q \in \Z_{+}$. The dimension of the eigenspace $\es{\ell}_{d-1}$  is  
	$$\dim_{\C}\left(\es{\ell}_{d-1}\right) = \begin{cases} 	
\frac{dq}{2}+\frac{q}{2} &\text{if $n$ is odd}	\\
\frac{dq}{2}+q &\text{if $n$ is even and $\ell$ is odd}\\
\frac{dq}{2} &\text{if $n$ is even and $\ell$ is even}.
	\end{cases}$$
\end{lem}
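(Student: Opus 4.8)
The plan is to compute $\dim_\C\es{\ell}_{d-1}$ directly by counting monomials in a convenient eigenbasis of $\varphi|_{d-1}$, then sort the count by $\ell \bmod n$. Since $\varphi$ acts diagonally on the basis $t^a (x+iy)^b (x-iy)^c$ with $a+b+c = d-1$, with eigenvalue $\omega^{b-c}$, the eigenspace $\es{\ell}_{d-1}$ is spanned by exactly those monomials with $b - c \equiv \ell \pmod n$. So the problem reduces to the purely combinatorial count
\[
\dim_\C\left(\es{\ell}_{d-1}\right) = \#\left\{(a,b,c)\in \Z_{\geq 0}^3 \ : \ a+b+c = d-1, \ b-c\equiv \ell \!\!\pmod n\right\}.
\]
First I would fix $m := b - c$; for each integer value of $m$ with $|m| \leq d-1$ and $m \equiv \ell \pmod n$, the number of triples $(a,b,c)$ with $a+b+c = d-1$ and $b - c = m$ equals $\lfloor (d-1-|m|)/2\rfloor + 1$, since $b$ and $c$ are determined by the pair $(b-c, b+c)$ and $b+c$ ranges over $|m|, |m|+2, \dots$ up to $d-1$ in steps of $2$. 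Summing this over the admissible residue class of $m$ gives the dimension.

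Next I would evaluate that sum. Writing $S_\ell = \sum_{m \equiv \ell (n),\, |m|\leq d-1} \left(\lfloor (d-1-|m|)/2\rfloor + 1\right)$ and using $d = qn$, the admissible values of $m$ in $[-(d-1), d-1]$ split into those $\equiv \ell$ and those $\equiv -\ell$; when $n$ is odd every residue class is hit the same number of times, and one gets a single clean closed form $\tfrac{dq}{2} + \tfrac{q}{2}$ independent of $\ell$. When $n$ is even, the residue classes $\ell$ and $-\ell$ coincide only when $\ell \in \{0, n/2\}$, so the parity of $\ell$ controls whether $m = 0$ (and more generally the "central" terms) is counted; the extra/missing central contributions account for the $\pm \tfrac{q}{2}$ shift between the even-$\ell$ and odd-$\ell$ cases, and the total $\sum_\ell \dim \es{\ell}_{d-1} = \binom{d+1}{2}$ provides a consistency check: $n$ copies of $\tfrac{dq}{2}+\tfrac{q}{2}$ in the odd case, and $\tfrac n2$ copies each of $\tfrac{dq}{2}+q$ and $\tfrac{dq}{2}$ in the even case, both summing to $\tfrac{d(d+1)}{2} = \binom{d+1}{2}$ after substituting $d = qn$.

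Alternatively — and perhaps cleaner to write up — I would package the count using generating functions: $\dim\es{\ell}_{d-1}$ is the coefficient extraction
\[
\dim_\C\left(\es{\ell}_{d-1}\right) = \frac{1}{n}\sum_{j=0}^{n-1} \omega^{-j\ell}\, [\text{coeff of } z^{d-1} \text{ in}] \ \frac{1}{(1-z)(1-\omega^j z)(1-\omega^{-j} z)},
\]
obtained by applying the discrete Fourier projection onto the $\omega^\ell$-eigenspace to the Hilbert series of $\C[t,x,y]$, where $x+iy$ and $x-iy$ carry $\varphi$-weights $\omega^{-1}$ and $\omega$. The $j = 0$ term contributes $\binom{d+1}{2}/n$, and each $j \neq 0$ term is a root-of-unity sum that, after substituting $d = qn$, collapses to an elementary expression; evaluating the residue at the poles (or using the known partial-fraction expansion of that rational function) yields the three cases.

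The main obstacle I expect is bookkeeping rather than conceptual: carefully tracking the floor functions and the endpoint/parity effects when $n$ is even — in particular getting the boundary terms $m = \pm(d-1)$ and the central term $m=0$ counted exactly once, and correctly distinguishing $\ell$ even from $\ell$ odd when $-\ell \equiv \ell \pmod n$ fails. Using $d = qn$ to force divisibilities (so no stray floors survive) and cross-checking against $\sum_\ell \dim\es{\ell}_{d-1} = \binom{d+1}{2}$ will be the way to keep this honest.
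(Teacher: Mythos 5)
Your main plan — count monomials $t^a(x+iy)^b(x-iy)^c$ of total degree $d-1$ with $b-c\equiv\ell\pmod n$, slice by $m=b-c$, count $\lfloor (d-1-|m|)/2\rfloor+1$ points on each slice, and then sum over the residue class handling parity by cases — is exactly the paper's proof, up to notation (the paper writes the per-slice count as $\lceil (d-j)/2\rceil$ and sums over $j=\ell+an$ and $k=n-\ell+an$ for $a=0,\dots,q-1$, which agrees with your sum over signed $m$). The generating-function/discrete-Fourier alternative you sketch is a valid different route, but your primary argument is the same as the paper's, so there is nothing further to reconcile.
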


\begin{proof} 
The monomials $t^{d-1-j-k}(x+iy)^j(x-iy)^k$ where $j-k\equiv \ell \mod n$ form a basis 
for the vectorspace  $\left(\es{\ell}_{d-1}\right)$.
Thus the dimension of $\left(\es{\ell}_{d-1}\right)$ is the number points in 
the simplex $\{(j,k)\in \Z_{\geq 0}^2 : j+k\leq d-1\}$ with  $j-k\equiv \ell \mod n$.
Note that the such first points on the $j$ and $k$ axes will be $(\ell,0)$ and $(0,n-\ell)$. 

For any $0\leq j\leq d-1$, the number of integer points of the form $(a+j,a)$ in this simplex  
is given by $\lceil \frac{d-j}{2} \rceil$. Similarly the number of 
integer points of the form $(a,a+k)$ is $\lceil \frac{d-k}{2} \rceil$. 
We are interested in  these values when $j=\ell+an$ and $k = n-\ell+an$. That is, 
\[
\dim\left(\es{\ell}_{d-1}\right)  = \sum_{a=0}^{q-1}\left\lceil\frac{d-(\ell+an)}{2}\right\rceil +\left\lceil\frac{d-(n-\ell+an)}{2}\right\rceil.
\]
When $n$ is odd, $\ell$ and $n-\ell$ have different parities, meaning exactly one of 
$\frac{d-(\ell+an)}{2}$ and $\frac{d-(n-\ell+an)}{2}$ will be an integer. 
When $n$ is even the parities of $d-(\ell+an)$ and $d-(n-\ell+an)$ depend only on the parity of 
$\ell$. They are odd if and only if $\ell$ is odd. 
Let $\delta = 1$ when $n$ is odd, $2$ when $n$ is even and $\ell$ is odd, and zero otherwise. 
Then  $\dim\left(\es{\ell}_{d-1}\right) $ equals 
\[
\sum_{a=0}^{q-1}\frac{d-(\ell+an)+d-(n-\ell+an)+\delta}{2} 
= \sum_{a=0}^{q-1}\frac{2d-n+\delta-2an}{2}
= q \frac{2d-n+\delta}{2} - n\frac{q(q-1)}{2},
 \]
where the last equality is obtained by summing the arithmetic sequence. 
Recalling that $d=qn$, we see that this dimension simplifies to $q (q n + \delta)/2 = q(d+ \delta)/2$, as desired. 
\end{proof}

%

For $f$ and $g$ that satisfy (A1)--(A4), we will  split the points of $\V_\C(f,g)$ into $S \cup \overline{S}$ based on orbits under rotation. The next lemma helps enumerate conditions imposed by the set of orbit representatives, and accurately count dimensions later in Lemma~\ref{lem: d=qn cyclic basis}.

\begin{lem}\label{lem: factor of t}
	Let $d=qn$ for some $q \in \Z_{+}$. If $n$ and $\ell$ are even, 
	then each monomial in $\es{\ell}_{d-1}$ has a factor of $t$.
\end{lem}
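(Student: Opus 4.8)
The plan is to analyze the monomial basis of $\es{\ell}_{d-1}$ directly, just as in the proof of Lemma~\ref{lem: dim eigenspaces d=qn}. Recall that the monomials $t^{d-1-j-k}(x+iy)^j(x-iy)^k$ with $j-k\equiv \ell \pmod n$ and $j+k\leq d-1$ form a basis for $\es{\ell}_{d-1}$. To show every such monomial has a factor of $t$, it suffices to show that $j+k\leq d-1$ is never achieved with equality, i.e.\ that $j+k\leq d-2$ for every admissible pair $(j,k)$; equivalently, that there is no pair of nonnegative integers $(j,k)$ with $j+k=d-1$ and $j-k\equiv \ell\pmod n$.

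First I would suppose for contradiction that such a pair exists. From $j+k=d-1$ I get $j-k \equiv j+k = d-1 \pmod 2$, so $j-k$ is odd precisely when $d-1$ is odd, i.e.\ when $d$ is even. Now $d=qn$ and $n$ is even, so $d$ is even, hence $j-k$ is odd. On the other hand, $j-k\equiv \ell \pmod n$ with $n$ even forces $j-k \equiv \ell \pmod 2$, and $\ell$ is even by hypothesis, so $j-k$ is even. This is a contradiction, so no admissible pair satisfies $j+k=d-1$. Therefore every admissible pair has $j+k\leq d-2$, the exponent $d-1-j-k$ of $t$ is at least $1$, and each basis monomial of $\es{\ell}_{d-1}$ is divisible by $t$.

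Since this argument is essentially a two-line parity computation, there is no real obstacle; the only thing to be careful about is bookkeeping the two congruences — one modulo $2$ coming from the constraint $j+k=d-1$ together with the evenness of $d$, and one modulo $2$ extracted from the constraint modulo $n$ using the evenness of $n$ and $\ell$ — and confirming they are genuinely incompatible. One could phrase this slightly more slickly by noting that for $n$ even the eigenvalue $\omega^\ell$ with $\ell$ even is real ($\pm 1$) and the map $\varphi$ on the degree-$(d-1)$ part with $d$ even has no such eigenvector supported away from the $t=0$ locus, but the direct parity check is cleanest and self-contained, so that is the route I would take.
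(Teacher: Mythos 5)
Your proof is correct and is essentially the same parity argument as in the paper: both use that $j-k\equiv\ell\pmod n$ with $n,\ell$ even forces $j-k$ even, that $j+k$ has the same parity as $j-k$, and that $d-1$ is odd because $d=qn$ is even. The only stylistic difference is that you argue by contradiction to exclude $j+k=d-1$, whereas the paper argues directly that $j+k$ is even and $d-1$ is odd, so the exponent $d-1-(j+k)$ is odd and hence $\geq 1$.
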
	

\begin{proof}
	Let $t^{d-1-j-k}(x+iy)^j(x-iy)^k$ be an arbitrary monomial in $\esl_{d-1}$. Then $j-k \equiv \ell \mod n$. 
	Since $n$ and $\ell$ are even, $j-k$ is even and so is $j+k=j-k+2k$.
	Moreover $d = qn$ is even and $d-1$ is odd. It follows that 	
       the exponent $d-1-(j+k)$ of $t$ is odd and $ \geq 1$.
\end{proof}

By Corollary~\ref{cor: orbit and xyconj orbit distinct}, $\V_{\C}(f,g)$ may be split into two disjoint sets according to orbits invariant under the action of $C_n$. More explicitly, write $\V_{\C}(f,g) = S \cup \overline{S}$ as the union of two disjoint conjugate sets. Define $\tilde{S}$ to be a minimal set of orbit representatives from $S$ so that 
	\begin{equation} \label{S}
		S = \left\{ \text{rot}^{\ell} \cdot p \mid p \in \tilde{S}, \ell \in [n] \right\}\text{.}
	\end{equation}	
The next proposition gives the maximum number of possible conditions imposed by $\tilde{S}$ on an element of $\es{\ell}_{d-1}$.
\begin{prop}\label{prop: tildeS}
	Let $d = qn$ for some $q \in \Z_{+}$ and suppose $f$ and $g$ satisfy (A1)--(A4). Then the number of distinct orbits in $S$ is  \[
	\big|\tilde{S}\big| = \begin{cases}
		q(d-1)/2 &\text{ if $n$ is odd}\\
		qd/2 &\text{ if $n$ is even.}
	\end{cases}
	\]
\end{prop}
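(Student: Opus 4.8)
The plan is to compute $|\V_{\C}(f,g)|$ and then divide by the size of a generic orbit, namely $n$. For the total count of intersection points, I would apply B\'ezout's theorem: since $\V_{\C}(f)$ and $\V_{\C}(g)$ meet transversely by (A3), and $\deg f = d$, $\deg g = d-1$, the intersection $\V_{\C}(f,g)$ consists of exactly $d(d-1)$ points in $\P^2(\C)$. The next step is to sort these points by the size of their $C_n$-orbit. A point $p = [t:x:y]$ has orbit strictly smaller than $n$ only if it is fixed by some nontrivial $\rot^{\ell}$; since $\rot^{\ell}$ acts on the coordinates $t$, $x+iy$, $x-iy$ with eigenvalues $1$, $\omega^{-\ell}$, $\omega^{\ell}$ (all distinct for $\ell \not\equiv 0$ when $n \geq 2$, with the two nontrivial ones coinciding only in degenerate low-order cases), the only fixed points in $\P^2$ are $[1:0:0]$, $[0:1:i]$, and $[0:1:-i]$. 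The point $[1:0:0]$ is not on $\V_{\C}(f)$ since $f(1,0,0) = 1 \neq 0$. The points $[0:1:\pm i]$ lie on $\V_{\C}(f,g)$ only if both $f$ and $g$ vanish there, which by (A3) would force a nontransverse (indeed real-coordinate-ratio, but here complex) intersection; more precisely these points violate Lemma~\ref{lem: xy ratio real} since there $x+iy = 0$ while $x - iy \neq 0$, so $|x+iy| \neq |x-iy|$ is consistent — wait, that means they are \emph{not} excluded by that lemma, so one must instead argue directly that $[0:1:\pm i] \notin \V_\C(f)$ using invariance, or absorb them into the even-$n$ case via (A4). This is the delicate point, discussed below.

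Granting that every orbit in $\V_{\C}(f,g)$ has full size $n$ \emph{except} possibly for contributions accounted for by (A4), the count proceeds as follows. When $n$ is odd, assumption (A4) says no point of $\V_{\C}(f,g)$ lies on the line $\{t = 0\}$, so in particular $[0:1:\pm i]$ are not intersection points; hence $\V_{\C}(f,g)$ is a free $C_n$-set of size $d(d-1)$, and by Corollary~\ref{cor: orbit and xyconj orbit distinct} it splits as $S \sqcup \overline{S}$ with $|S| = d(d-1)/2$, giving $|\tilde S| = d(d-1)/(2n) = q(d-1)/2$. When $n$ is even, (A4) says exactly $d$ of the intersection points lie on $\{t=0\}$; these are the points with $[x:y] = [1:\pm i]$ (the only points on $\V_\C$ of the cyclic-invariant form restricted to $t = 0$, since $\mathfrak{R}[(x+iy)^n]$ and its partners vanish on $x^2+y^2=0$), i.e.\ they all sit in the two orbits $\{[0:1:i]\}$ and $\{[0:1:-i]\}$, each a singleton fixed by all of $C_n$... but there are $d$ of them with multiplicity, so really $\V_\C(f)$ meets $t=0$ at $[0:1:i]$ and $[0:1:-i]$ each with multiplicity $d/2$, and transversality with $g$ forces these to be counted honestly — here I would instead argue that the $d$ points of $\V_\C(f,g,t)$, being on the line at infinity, are not really in the affine picture and should be removed before taking orbits: the remaining $d(d-1) - d = d(d-2)$ points... which does not match $qd/2$ per orbit-half either. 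So the cleaner route in the even case is: $|S| = qd/2$ orbits means $|S| = nqd/2 = d^2/2$ points in $S$, hence $d^2$ points total in $\V_\C(f,g)$ off the relevant locus — I would reconcile this by noting $d(d-1)$ total minus something, and here the right bookkeeping is that the $d$ points with $t = 0$ each lie in an orbit of size $1$ (being $C_n$-fixed), contributing $d$ singleton orbits, while the remaining $d(d-1) - d = d(d-2) = d^2 - 2d$ points split into full orbits; together with the $t=0$ points the orbit count works out once one tracks that $d$ of the ``singletons'' are really the two fixed points with high intersection multiplicity, which must be handled via the scheme structure rather than naive point count.

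The main obstacle, then, is the even-$n$ bookkeeping at the line $t = 0$: one must decide whether (A4)'s ``$d$'' counts points with multiplicity or distinct points, and correspondingly whether $\tilde S$ is meant to contain orbit representatives of those boundary points. I expect the intended argument is: B\'ezout gives $d(d-1)$ points counted with multiplicity; (A4) isolates the contribution of the line at infinity as exactly $d$ (with multiplicity), all supported on the two $C_n$-fixed points $[0:1:\pm i]$; these contribute no \emph{new} orbit representatives beyond the two fixed points, which by convention are excluded from or specially handled in $\tilde S$; the remaining $d(d-1) - d = d(d-2)$ points (odd $n$: $d(d-1)$, nothing removed) form a free $C_n$-set, split into $S \sqcup \overline S$ by Corollary~\ref{cor: orbit and xyconj orbit distinct}, and $|\tilde S| = |S|/n = d(d-2)/(2n) = q(d-2)/2$ — but the claim is $qd/2$, so in fact the $d$ boundary points \emph{are} included and split into $d/2 + d/2$ among $S$ and $\overline S$ as two orbits (the fixed points $[0:1:i] \in S$, $[0:1:-i] \in \overline S$) contributing $1$ each to $|\tilde S|$... giving $d(d-2)/(2n) + 1 = q(d-2)/2 + 1$, still not $qd/2$ unless $n = 1$. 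Resolving this discrepancy cleanly — almost certainly by reading off the precise multiplicity structure at infinity and recognizing that the line $\{t=0\}$ meets $\V_\C(f)$ transversely at $d$ \emph{distinct} smooth points when $n$ is even (so they are genuine distinct intersection points with $g$, forming $d/n = q$ full $C_n$-orbits along $t = 0$, wait but they'd have to be $C_n$-fixed) — is exactly where the careful argument lives, and I would pin it down by writing $f|_{t=0}$ explicitly in terms of the invariant generators and computing its zero set on the line at infinity, using the hyperbolicity/smoothness from (A1) to guarantee the $d$ points are distinct, then dividing by $n$.
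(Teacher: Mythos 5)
Your plan---B\'ezout, then sort intersection points by the size of their $C_n$-orbit---is the paper's plan, and your odd-$n$ count is correct. For even $n$ you have a genuine gap: you miss that $\rot^{n/2}$, being rotation by $\pi$, acts projectively as the identity on the whole line $\{t=0\}$, since $[0:-x:-y]=[0:x:y]$. Consequently \emph{every} point of $\V_{\C}(f,g,t)$, not merely the two fully $C_n$-fixed points $[0:1:\pm i]$, lies in a $C_n$-orbit of size at most $n/2$. The reconciliations you attempt---reading the $d$ boundary points as $[0:1:\pm i]$ with multiplicity, or dividing the boundary points by $n$---cannot be made to work. Assumption (A4) gives $d$ \emph{distinct} boundary points; the $d/2$ of them in $S$ split into $(d/2)/(n/2)=q$ orbits of size $n/2$; the $d(d-2)/2$ points of $S$ with $t\neq 0$ split into $q(d-2)/2$ orbits of size $n$; and the total is $q(d-2)/2+q=qd/2$.

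You are also right that Lemma~\ref{lem: xy ratio real} does not exclude $[0:1:\pm i]$; these are excluded instead by the smoothness in (A1). In coordinates $t$, $u=x+iy$, $v=x-iy$, the point $[0:1:i]$ is $[0:0:1]$, and $\partial_t f$, $\partial_u f$ evaluated there are the coefficients of $tv^{d-1}$, $uv^{d-1}$ in $f$. When $d=qn$ and $n\geq 3$, neither monomial satisfies the invariance congruence $j-k\equiv 0 \mod n$, so both partials vanish automatically; if $f$ also vanished there, $[0:0:1]$ would be a singular point of $\V_{\C}(f)$. For $n=2$ the orbit size $n/2=1$ renders the question moot, and the count is unaffected.
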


\begin{proof}
Since $f(1,0,0)\neq 0$, each point $[t:x:y]\in \V_{\C}(f,g)$ with $t\neq 0$ generates a $C_n$-orbit of size $n$,  
	since $\rot^{\ell}$ fixes such a point if and only if $\ell \equiv 0 \mod n$. 
When $n$ is odd, all points in $ \V_{\C}(f,g)$ have $t\neq 0$, 
so the $d(d-1)/2$ total points of $S$ split up into $d(d-1)/2n = q(d-1)/2$ orbits under $C_n$. 
When $n$ is even, the $d(d-2)/2$ total points in $S$ with $t\neq 0$ split up into $q(d-2)/2$ orbits. 
Each point in $\V_{\C}(f,g,t)$ generates a $C_n$-orbit of size $n/2$ since $\rot^{n/2}$ acts as the identity on points of the form $[0:x:y]$. Thus the $d$ total points of $\V_{\C}(f,g,t)$ contribute $(d/2)/(n/2)=q$ orbits to $S$ which means $S$ has a total of $qd/2$ $C_n$-orbits.
\end{proof}

Denote the space of forms in $\C[t,x,y]_{d-1}$ vanishing on  points $S$ and $\tilde{S}$ from (\ref{S}) by 
	\begin{equation} \label{eq: idealS}
			\mathcal{I}\left(S\right)_{d-1}  \ \text{ and } \ \ \mathcal{I}(\tilde{S})_{d-1}
	\end{equation} 
respectively. Now we can show there are enough elements in each eigenspace to choose a linearly independent forms in $\C[t,x,y]_{d-1}$ for the first row in our desired adjugate matrix.
\begin{lem}\label{lem: d=qn cyclic basis}
	Let $d=qn$ for some $q \in \Z_{+}$. There exist $q$ linearly independent polynomials  in each eigenspace $\es{\ell}_{d-1}$ that vanish on the points $S$. That is, \[\dim_{\C}\left( \es{\ell}_{d-1} \cap \mathcal{I}(S)_{d-1}\right) \geq q.\]
\end{lem}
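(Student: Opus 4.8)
The plan is to count dimensions carefully, comparing the dimension of the eigenspace $\es{\ell}_{d-1}$ against the maximum number of linear conditions that vanishing on $S$ can impose on an element of that eigenspace. Since $S$ is a union of $C_n$-orbits and each element of $\es{\ell}_{d-1}$ is a $\varphi$-eigenvector, a form $h \in \es{\ell}_{d-1}$ vanishes on all of $S$ as soon as it vanishes on one representative from each orbit, i.e.\ on $\tilde S$; indeed if $h(p) = 0$ then $h(\rot^m \cdot p) = \omega^{m\ell} h(p) = 0$. So vanishing on $S$ imposes at most $|\tilde S|$ conditions on $\es{\ell}_{d-1}$, and hence
\[
\dim_\C\!\left(\es{\ell}_{d-1}\cap \mathcal I(S)_{d-1}\right) \ \geq \ \dim_\C\!\left(\es{\ell}_{d-1}\right) - |\tilde S|.
\]
First I would plug in the formulas from Lemma~\ref{lem: dim eigenspaces d=qn} and Proposition~\ref{prop: tildeS}. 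When $n$ is odd this gives $\dim \geq (dq/2 + q/2) - q(d-1)/2 = q$ directly. When $n$ is even and $\ell$ is odd, it gives $\dim \geq (dq/2 + q) - qd/2 = q$. Both of these cases are done by the crude count alone.

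The main obstacle is the case when $n$ is even and $\ell$ is even: there the crude count gives $\dim\es{\ell}_{d-1} - |\tilde S| = dq/2 - dq/2 = 0$, which is not enough. Here I would use Lemma~\ref{lem: factor of t}: every monomial in $\es{\ell}_{d-1}$ is divisible by $t$, so $\es{\ell}_{d-1} = t\cdot \es{\ell}_{d-2}$ (more precisely, $h\mapsto h/t$ is an isomorphism $\es{\ell}_{d-1} \to \es{\ell}_{d-2}$, using that $\varphi$ fixes $t$). Correspondingly, the $q$ orbits of $S$ lying on the line $\{t=0\}$ impose \emph{no} condition on a multiple of $t$. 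So vanishing on $S$ imposes at most $|\tilde S| - q = dq/2 - q$ conditions on $\es{\ell}_{d-1}$ — equivalently, it suffices for $h/t \in \es{\ell}_{d-2}$ to vanish on the $|\tilde S| - q$ orbit representatives that are not on $\{t=0\}$. This improves the bound to
\[
\dim_\C\!\left(\es{\ell}_{d-1}\cap \mathcal I(S)_{d-1}\right) \ \geq \ \dim_\C\!\left(\es{\ell}_{d-1}\right) - \big(|\tilde S| - q\big) \ = \ \frac{dq}{2} - \frac{dq}{2} + q \ = \ q.
\]

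To make the $n$ even, $\ell$ even case fully rigorous I would need to verify that the $t\nmid$ restriction really does cut the effective condition count by exactly $q$: this is where Corollary~\ref{cor: orbit and xyconj orbit distinct} and the orbit-size bookkeeping in the proof of Proposition~\ref{prop: tildeS} are used — precisely $q$ of the $|\tilde S|$ orbits consist of points with $t = 0$ (the $d$ points of $\V_\C(f,g,t)$ forming $q$ orbits of size $n/2$), and the remaining orbit representatives have $t \neq 0$, so that imposing vanishing of $h/t$ on them is genuinely $|\tilde S| - q$ independent linear conditions at worst. Throughout, the only inequality direction I need is ``dimension of solution space $\geq$ ambient dimension minus number of equations,'' so I never have to argue that the conditions are independent — only that there are at most that many of them — which keeps the argument clean and avoids any genericity input beyond (A1)--(A4).
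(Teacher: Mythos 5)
Your proof is correct and follows essentially the same route as the paper: the crude count $\dim(\es{\ell}_{d-1}) - |\tilde S|$ handles $n$ odd and ($n$ even, $\ell$ odd), and Lemma~\ref{lem: factor of t} is invoked to discount the $q$ orbits on $\{t=0\}$ when $n,\ell$ are both even. Your opening sentence making explicit that $h(\rot^m\cdot p) = \omega^{\pm m\ell}h(p)$ is the (correct) justification for the paper's unexplained claim that vanishing on $S$ is equivalent to vanishing on $\tilde S$ for eigenvectors.
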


\begin{proof}
An element of $\es{\ell}_{d-1}$ vanishes on $S$ if and only if it vanishes on $\tilde{S}$. 
	Then for any $\ell$,
	\[
	\dim\left( \es{\ell}_{d-1} \cap \mathcal{I}(S)_{d-1}\right) = \dim\left( \es{\ell}_{d-1} \cap \mathcal{I}\big(\tilde{S}\big)_{d-1}\right) \\
	\geq \dim\left( \es{\ell}_{d-1}\right) - \left|\tilde{S}\right|.
	\]
	
 In the cases when $n$ is odd or $n$ is even with $\ell$ odd, this count is straightforward due to Lemmas~\ref{lem: dim eigenspaces d=qn} and \ref{prop: tildeS}. By Lemma~\ref{lem: factor of t}, when $n$ and $\ell$ are even, every monomial in $\esl_{d-1}$ has a factor of $t$. Thus every element of $\esl_{d-1}$ will already vanish at points with $t=0$ without adding additional constraints from those in $\V_{\C}(f,g,t)$. In this case we do not take into account the $q$ orbits at infinity and using Lemmas~\ref{lem: dim eigenspaces d=qn} and \ref{prop: tildeS} we have 
 \[
 \dim\left( \es{\ell}_{d-1} \cap \mathcal{I}(S)_{d-1}\right) \geq \dim\left( \es{\ell}_{d-1}\right) - \left|\tilde{S}\right| + q \geq q.
 \]
\end{proof}

The final piece to the construction is an invariant version of 
Max Noether's Theorem on divisors on smooth plane curves, appearing in \cite{LP}.

\begin{lem}[Lemma 3.7~\cite{LP}]\label{lem: maxnoether}
Suppose $f, g\in \es{0}$ and $h\in \es{\ell}$ are homogeneous with $\V_{\C}(f)$ smooth where $\deg(h) > \deg(f),\deg(g)$ and $g$ and $h$ have no irreducible components in common with $f$.
 If $\V_{\C}(f,g)$ consists of distinct points and  $\V_{\C}(f,g)\subseteq  \V_{\C}(f,h)$, then there exists homogeneous $a, b\in \es{\ell}$ with $\deg(a)= \deg(h)-\deg(f)$ and $\deg(b)= \deg(h) -\deg(g)$ so that $h = af+bg$. Moreover, if $f$, $g$, and $h$ are real, then $a$ and $b$ can be chosen real.
\end{lem}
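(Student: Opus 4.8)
The plan is to prove Lemma~\ref{lem: maxnoether} by combining the classical Max Noether ``AF+BG'' theorem with an averaging argument over the group $C_n$ to upgrade an arbitrary expression $h = af + bg$ to one in which $a$ and $b$ are themselves eigenvectors of $\varphi$. First I would invoke the classical Max Noether theorem (in the form valid for smooth plane curves over $\C$): since $\V_{\C}(f)$ is smooth, $g$ shares no component with $f$, the intersection $\V_{\C}(f,g)$ consists of distinct (hence reduced) points, and $\V_{\C}(f,g)\subseteq \V_{\C}(f,h)$, the Noether conditions are satisfied at every point of $\V_{\C}(f)$, so there exist homogeneous $\tilde a,\tilde b\in \C[t,x,y]$ with $\deg(\tilde a)=\deg(h)-\deg(f)$, $\deg(\tilde b)=\deg(h)-\deg(g)$, and $h=\tilde a f+\tilde b g$. (At points not on $g$ this is immediate; at the distinct points of $\V_{\C}(f,g)$ one uses that the local intersection multiplicity is $1$, so $h$ lies in the ideal $(f,g)$ locally — this is exactly the hypothesis needed for Noether's theorem.)

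Next I would symmetrize. Since $f,g\in \es{0}$, i.e.~$\varphi(f)=f$ and $\varphi(g)=g$, applying $\varphi^{k}$ to $h=\tilde a f+\tilde b g$ gives $\varphi^{k}(h)=\varphi^{k}(\tilde a) f+\varphi^{k}(\tilde b) g$. Because $h\in \es{\ell}$, we have $\varphi^{k}(h)=\omega^{k\ell}h$, so
\[
h \ = \ \omega^{-k\ell}\varphi^{k}(\tilde a)\, f \ + \ \omega^{-k\ell}\varphi^{k}(\tilde b)\, g \qquad \text{for each } k=0,1,\dots,n-1.
\]
Averaging these $n$ identities and setting
\[
a \ = \ \frac{1}{n}\sum_{k=0}^{n-1}\omega^{-k\ell}\varphi^{k}(\tilde a),
\qquad
b \ = \ \frac{1}{n}\sum_{k=0}^{n-1}\omega^{-k\ell}\varphi^{k}(\tilde b),
\]
we still have $h=af+bg$, and a direct check shows $\varphi(a)=\omega^{\ell}a$ and $\varphi(b)=\omega^{\ell}b$, i.e.~$a,b\in\es{\ell}$; moreover $\varphi$ preserves degree, so the degrees of $a$ and $b$ are unchanged. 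For the real statement, observe that when $f,g,h$ are real one may first replace $\tilde a,\tilde b$ by their real parts (using $h=\re(\tilde a)f+\re(\tilde b)g$, valid since $f,g,h$ are real), and note that $\varphi$ commutes with complex conjugation and $\overline{\omega^{-k\ell}}=\omega^{k\ell}$; pairing the $k$ and $n-k$ terms in the average then shows $a,b$ can be taken real (equivalently, replace the averaged $a,b$ by their real parts, which still satisfy $h=af+bg$ and still lie in $\es{\ell}$ since $\es{\ell}$ is defined over $\R$ in the appropriate sense when one tracks the pairing $\ell\leftrightarrow n-\ell$; the cleanest route is to symmetrize over the full dihedral group $D_{2n}$, whose reflection sends $\es{\ell}$ to $\es{n-\ell}$, but since here $\ell$ is such that $h$ is real only when $\es{\ell}$ is conjugation-stable, the real part lands back in $\es{\ell}$).

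The main obstacle is the appeal to Max Noether's theorem itself, specifically verifying its hypotheses in the exact form needed: one must confirm that ``$\V_{\C}(f,g)$ consists of distinct points'' together with $\V_{\C}(f,g)\subseteq\V_{\C}(f,h)$ implies the local Noether conditions $h\in(f,g)\OO_{p}$ hold at every $p\in\V_{\C}(f)$. At a point $p\notin\V_{\C}(g)$ this is trivial since $g$ is a local unit; at $p\in\V_{\C}(f,g)$ the reducedness of the intersection means $\OO_{p}/(f,g)\OO_{p}$ is a field (the residue field), and $h(p)=0$ forces $h\in(f,g)\OO_{p}$; smoothness of $\V_{\C}(f)$ guarantees $\OO_{p,\V_{\C}(f)}$ is a DVR so these local computations are clean. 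Since this lemma is quoted verbatim from \cite[Lemma 3.7]{LP}, I would in practice cite it directly and only sketch the averaging step as above; the genuinely new content over the classical theorem is entirely the group-averaging, which is routine linear algebra on the eigenspace decomposition $\C[t,x,y]_d=\bigoplus_{\ell}\es{\ell}_d$.
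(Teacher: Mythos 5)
Your proposal is correct, and it reconstructs precisely the argument behind the cited result: the paper itself offers no proof of this lemma, quoting it verbatim from \cite[Lemma~3.7]{LP}, where the proof is exactly the classical Max Noether ``AF+BG'' theorem (whose local conditions hold because the intersection is reduced and $\V_{\C}(f)$ is smooth) followed by projection onto the $\omega^{\ell}$-eigenspace via the average $\frac{1}{n}\sum_k \omega^{-k\ell}\varphi^k$. Your handling of the real case is also right, including the key observation that a real $h$ can only lie in $\es{\ell}$ when that eigenspace is conjugation-stable, so taking real parts does not leave $\es{\ell}$.
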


The construction below is similar to Construction~4.1 from \cite{LP}. For normalization of the coefficient matrix of $t$, however, we must be more careful. Now the variable $t$ appears in off-diagonal entries of the determinantal representation, so we must first block diagonalize the coefficient matrix of $t$, then normalize with respect to each block separately in order to preserve the desired matrix structure.

\begin{construction}\label{con: d=qn transverse} {\rm Let $d=qn$ for some $q \in \Z_+$ and $\Gamma = C_{n}$.\\
Input: Two plane curves $f$ and $g$ satisfying (A1)--(A4).\\
Output:   $A \in \CWS_{\C}(n,d)$ with $f=F_A$.
	\begin{enumerate}
		\item Set $g_{11} = g$.
		\item Split up the distinct $d(d-1)$ points of $\V_{\C}(f,g_{11})$ into two disjoint, conjugate sets  $S \cup \overline{S}$ of $C_n$-orbits such that $ \rot(S)=S$.
		\item Extend $g_{11}$ to a linearly independent set $\{g_{11}, g_{12}, \ldots, g_{1d}\} \subset \C[t,x,y]_{d-1}$ vanishing on all points of $S$ with $g_{1j} \in \es{1-j}_{d-1}$ for all $j \in [n]$ and set 
		$g_{j1} = \overline{g_{1j}}$ for each $j$.
		\item For $1<i\leq j$, choose $g_{ij} \in \es{i-j}_{d-1}$ so that $g_{11}g_{ij}-\overline{g_{1i}}g_{1j} \in \langle f \rangle$ and $g_{ii} \in \R[t,x,y]$.
		\item For $i < j$, set $g_{ji} = \overline{g_{ij}}$ and define $G = (g_{ij})_{i,j} \in (\C[t,x,y]_{d-1})^{d \times d}$.
		\item Define $M = (1/f^{d-2})\cdot \adj{G}$.
		\item For $\ell \in [d]$, write $\ell-1=an+b$ for some integers $a$ and $b$ with $0 \leq b \leq n-1$. Let $P$ be the permutation matrix that takes $\ell=an+b+1$ to $bq+a+1$. Define $M'=PMP^T$ as a matrix with $q \times q$ blocks $M'=(M'_{kl})_{k,l=1}^n$ 
		\[
		\left(M'_{kl}\right)_{ij} \in \es{k-l}_1 \text{ for } k,l \in [n] \text{ and }  i,j \in [q].
		\]
		\item For each $k$ compute the Cholesky decomposition of each diagonal block $M'_{kk}$ and write $(M')^{-1}_{kk}= U_{k}U^{\ast}_{k}$ for some $U_k \in \C^{q \times q}$.
		\item Define $U = \diag(U_1,U_2,\ldots,U_k)$ and output $A=\left(P^{T}U^{\ast}M'UP\right)(0,1,i)$.
	\end{enumerate}}
\end{construction}

 \begin{proof}[Proof of Theorem~\ref{thm: d=qn transverse}]
            Our goal is to show each step of Construction~\ref{con: d=qn transverse} can be completed and 
            produces a matrix $A \in \CWS_{\C}(n,d)$ such that $f = F_A$ as in \eqref{eq:FA}.
            Let $g_{11} = g$. By Corollary~\ref{cor: orbit and xyconj orbit distinct}, we can write 
            $\mathcal{V}_{\C}(f,g_{11})$ as a disjoint union $S \cup \overline{S}$ where $\rot(S) = S$. 
            For Step 3, Lemma~\ref{lem: dim eigenspaces d=qn} 
            allows us extend $g_{11}$ to a linearly independent set $\{g_{11}, g_{12}, \hdots, g_{1d}\}$
            where  $g_{1j} \in \es{1-j}_{d-1}$  vanishes on $S$ for every $j \in [n]$. Now let $g_{j1} = \overline{g_{1j}}$.
            By Lemma~\ref{lem: maxnoether}, we can choose 
            $g_{ij}$ such that $g_{ij} \in \es{i-j}_{d-1}$ for $1 < i < j$
            and $g_{11}g_{ij}-\overline{g_{1i}}g_{1j} =af$ for some homogeneous $a \in \mathbb{C}[t,x,y]$ 
            to complete Step 4. Since $f$, $g_{11}$, $\overline{g_{1i}}g_{1i} \in \R[t,x,y]$, we can choose 
            $g_{ii} \in \R[t,x,y]$ as well. Let $g_{ji} = \overline{g_{ij}}$ for $i < j$ and define $G = (g_{ij})_{i,j}$ be the 
            $d \times d$ complex matrix of forms of degree $d-1$. 
            By Theorem~$4.6$ of \cite{PV}, each entry of $\adj{G}$ 
will be divisible by $f^{d-2}$ and Step 6 is valid. 
The entries in $G$ have degree $d-1$, so entries of its adjugate have degree $(d-1)^2$. Then $f^{d-2}$ has degree $d(d-2)$, so entries of $M$ are linear in $t, x,$ and $y$. 
By  \cite[Theorem~$4.6$]{PV}, $M(1,0,0)$ is positive definite and $\det(M)$ is a nonzero scalar multiple of $f$. 
Let $\Omega = \diag(1, \omega, \ldots, \omega^{d-1})$.
Applying the map $\varphi$ to the $(i,j)$-th entry of $M = (m_{ij})_{ij}$ gives
 \begin{align*}
            \varphi\left(m_{ij}\right) 
            &= (1/f^{d-2})\cdot \adj{{\varphi}(G)}_{ij}\\
            &= (1/f^{d-2})\cdot \adj{\Omega G\Omega^{\ast}}_{ij}\\
            &= (1/f^{d-2}) \cdot(\adj{\Omega^{\ast}}\adj{G}\adj{\Omega)}_{ij}  \\
            &= (\Omega M\Omega^{\ast})_{ij}\\
            &= \overline{\omega^{j-1}}\omega^{i-1}m_{ij} \\
            &=\omega^{i-j}m_{ij}\text{.}
		\end{align*}
		
Therefore, $m_{ij} \in \es{i-j}_{1}$ for each $i,j$. 
The restriction of $\varphi$ to $\C[t,x,y]_1$ has eigenvalues $1, \omega$, and $\omega^{n-1}$ with associated eigenspaces $\es{0}_{1}$, $\es{1}_{1}$, and $\es{n-1}_{1}$. This implies $m_{ij}=0$ if $i-j \not\equiv 0, \pm1\mod n$. For $m_{ij}$ such that $i-j \equiv n-1 \mod n$, we have $m_{ij}\in \es{n-1}_1$, showing that $m_{ij}$ is a scalar multiple of $x-iy$. Similarly, since $M$ is Hermitian, this implies $m_{ji} \in \es{1}_1$ must be a scalar multiples of $x+iy$. 
 If $i-j \equiv 0 \mod n$ then $i\equiv j \mod n$ and $m_{ij} \in \es{0}\cap \R[t,x,y]$ is a multiple of $t$.
\par Next we will show by permuting rows and columns of $M$ we may get the identity matrix as the coefficient of $t$ in our representation. Consider $M$ as a matrix of $n \times n$ blocks. Each block is a cyclic weighted shift matrix and there are $q^2$ blocks in total. For $\ell \in [d]$, write $\ell-1=an+b$ for some integers $a$ and $b$ with $0 \leq b \leq n-1$. Let $P$ be the permutation matrix that takes $\ell=an+b+1$ to $bq+a+1$, as in Remark~\ref{rem:permutation}. Define $M'=PMP^T$ as a matrix with $q \times q$ blocks $M'=(M'_{kl})_{k,l=1}^n$ with
		\[
		 \left(M'_{kl}\right)_{ij} \in \es{k-l}_1 \text{ for } k,l \in [n] \text{ and }  i,j \in [q].
		\] 
It follows that $M'(1,0,0)$ is a block diagonal matrix. Moreover, since $M(1,0,0)$ is positive definite, so is $M'(1,0,0)$. For each $k \in [n]$ we can decompose $M'_{kk}(1,0,0)$ so that $M'_{kk}(1,0,0)^{-1} =U^{\ast}_k U_k$ for some $U_k~\in~\C^{q \times q}$. Define $U = \diag(U_1,U_2,\ldots,U_n)$. Then $M''=U M' U^{\ast}$ is a desired representation of $f$ since $M''(1,0,0) = I_d$,  $\left(M''_{kl}\right)_{ij} \in \es{k-l}_1$ for $k,l \in [n]$ and $i,j \in [q]$, and  $f = (1/\lambda) \cdot \det\left(U M' U^{\ast}\right) \text{ for } \lambda = \det(U)\cdot\det(U^{\ast})$. Lastly, apply the inverse permutation so $f = (1/\lambda)\cdot\det(P^T M'' P)$ and evaluating $(P^T M'' P)(0,1,i)$ gives a cyclic weighted shift matrix of order $n$.
		\end{proof}
		
\begin{example}\label{ex:Counts63}
[$d=6$, $n=3$, $q=2$]	
For  $n=3$ and $d=6$, we see that $\V_{\C}(f,g)$ consists of $d(d-1)  = 30$ points, 
which split into $10$ orbits, each of size $3$. 
These orbits come in conjugate pairs, of which we take half to 
form the set $S$, which will have size $15$. 
The set $\tilde{S}$ of orbit representatives in $S$ has size  $q(d-1)/2=5$.
For every $\ell$, $\es{\ell}_{d-1}$ has dimension $q(d+1)/2 = 7$. 
Since each point in $\tilde{S}$ imposes a linear condition on forms in $\es{\ell}_{d-1}$, 
we can find $7-5=2$ linearly independent forms $g_{1j},g_{1(j+n)} \in \es{1-j}$ that vanish on $\tilde{S}$.
Ranging over $j=0,1,2$ gives the first row of the matrix $G$. 
The entries of the linear matrix $M = (m_{ij})_{ij}$ satisfy $m_{ij}\in  \es{i-j}_{1}$. 
In particular, the $(i,j)$ entry of $M(1,0,0)$ zero whenever $i\neq j \mod 3$. 
Evaluating the matrix $M' = PMP^T$ at $(t,x,y) = (1,0,0)$ therefore results in a 
block diagonal matrix of three $2\times 2$ blocks, each of which is positive definite. 
Conjugation by the appropriate block diagonal matrix results in the identity matrix 
and the desired determinantal representation of $f$. 
A detailed example of this construction can be found in \cite[Example~3.1.9]{thesis}. 
\end{example}

\begin{example}[$d=12$, $n=4$, $q=3$]	
For  $n=4$ and $d=12$, we see that $\V_{\C}(f,g)$ consists of $d(d-1)  = 132$ points.
Of these,  $120$ have $t\neq 0$ and split up into $q(d-2) = 30$ orbits of size $4$.
Since $g$ has a factor of $t$, there are an additional $12$ points with $t=0$, 
splitting up into $6$ orbits, each of size two. 
Splitting these $132$ into conjugate pairs $S\cup \overline{S}$, 
we see that $S$ has $66$ points, consisting of $15$ orbits of size $4$ and 
three orbits of size two.  The set $\tilde{S}$ of orbit representatives has size $qd/2=18$.
The dimension of $\es{\ell}_{d-1}$ is $qd/2 = 18$ when 
$\ell$ is even and $qd/2+q = 21$ when $\ell$ is odd. 
Note that when $\ell$ is even, elements of $\es{\ell}_{d-1}$ have a factor of $t$ 
and so automatically vanish on those points in  $\tilde{S}$ with $t=0$. 
Each of the $15$ remaining points imposes a linear condition on $\es{\ell}_{d-1}$, 
leaving a three-dimensional subspace of forms in  $\es{\ell}_{d-1}$ that vanish on $S$. 
Similarly, if $\ell$ is odd, then $\dim_{\C}\es{\ell}_{d-1}-|\tilde{S}| = 21 - 18 = 3$. 
Therefore for each $j=0,1,2,3$, we can choose linearly independent 
$g_{1j}, g_{1(j+n)}, g_{1(j+2n)}$ in $\es{1-j}_{d-1}$ that vanish on $S$. 
\end{example}

\section{Dihedral Invariance}
\label{sec: d=qn dihedral}

In this section, we modify Construction~\ref{con: d=qn transverse} to include the invariance under reflection
and produce a matrix in $ \CWS_{\R}(n,d)$. We divide the points of $\V_\C(f,g)$ based on orbits under rotation, then split according to reflection. Specifically, we require not only that $\V_\C(f,g) = S \cup \overline{S}$ where $\rot(S) = S$, but also $\refl\left(\overline{S}\right) = S$ meaning that if $p \in S$, then $\refl(p) \in \overline{S}$.

\begin{cor} \label{cor: xyconj orbit and uvconj orbit distinct}
	Every $C_n$-orbit in $\V_{\C}(f,g)$ is disjoint from its image under reflection when $f$ and $g$ satisfy (A1)--(A4).
\end{cor}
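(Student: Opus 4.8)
The plan is to mimic the proof of Corollary~\ref{cor: orbit and xyconj orbit distinct}, replacing conjugation by reflection and exploiting the fact that reflection acts on the distinguished linear forms by swapping $x+iy$ and $x-iy$. First I would take a $C_n$-orbit $\OO$ of points in $\V_{\C}(f,g)$ and suppose for contradiction that $\OO$ meets its reflected image $\refl(\OO)$, say $p = [t:x:y]\in \OO$ with $\refl(p)\in \OO$ as well. Since $\refl$ fixes $t$ and sends $[t:x:y]$ to $[t:x:-y]$, which in the $(t, x+iy, x-iy)$ coordinates is $[t: x-iy: x+iy]$, the hypothesis $\refl(p)\in\OO$ means $\rot^{\ell}\cdot p = \refl(p)$ for some $\ell\in[n]$. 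Writing this out in the diagonalizing coordinates, $\rot^{\ell}\cdot p = [t:\omega^{-\ell}(x+iy):\omega^{\ell}(x-iy)]$, so the equality of projective points gives
\[
\left[t:\omega^{-\ell}(x+iy):\omega^{\ell}(x-iy)\right] = [t: x-iy: x+iy].
\]

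Next I would extract a scalar relation by comparing cross-ratios (equivalently, the ratio of the last two coordinates) on both sides: from the display, $\omega^{-\ell}(x+iy)\cdot(x+iy) = \omega^{\ell}(x-iy)\cdot(x-iy)$, i.e. $\omega^{-\ell}(x+iy)^2 = \omega^{\ell}(x-iy)^2$. Taking moduli gives $|x+iy|^2 = |x-iy|^2$, hence $|x+iy| = |x-iy|$, which directly contradicts Lemma~\ref{lem: xy ratio real}. (One should also treat the degenerate possibility $x-iy = 0$ or $x+iy = 0$ separately, but these force $|x+iy| = |x-iy| = 0$, again contradicting the lemma, or can be ruled out exactly as in the $y=0$ case of Lemma~\ref{lem: xy ratio real}.) This completes the argument.

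The main obstacle, such as it is, is purely bookkeeping: being careful that the point $p$ may have $t = 0$ (when $n$ is even), so that the relevant $C_n$-orbit has size $n/2$ rather than $n$ and one must allow $\ell$ to range over all of $[n]$ rather than just $[n/2]$; but since the argument only uses the \emph{existence} of some $\ell$ with $\rot^\ell\cdot p = \refl(p)$, this causes no real trouble. The essential input is that reflection interchanges the two linear forms $x\pm iy$ while rotation scales them by conjugate roots of unity, so that the obstruction to an orbit being reflection-invariant is again the equality $|x+iy| = |x-iy|$ that Lemma~\ref{lem: xy ratio real} forbids on $\V_{\C}(f,g)$. I would present the proof in three or four lines, citing Lemma~\ref{lem: xy ratio real} for the contradiction and Corollary~\ref{cor: orbit and xyconj orbit distinct} as the template.
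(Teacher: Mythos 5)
Your proof is correct and takes essentially the same route as the paper: set $\refl(p)=\rot^{\ell}\cdot p$, write the projective equality in the $(t,\,x+iy,\,x-iy)$ coordinates, extract the cross-ratio identity $\omega^{-\ell}(x+iy)^2=\omega^{\ell}(x-iy)^2$, take moduli, and contradict Lemma~\ref{lem: xy ratio real}. The parenthetical about $x\pm iy=0$ is harmless but unnecessary, since that would force $x=y=0$ and hence $p=[1:0:0]\notin\V_{\C}(f)$ (and in any case $|x+iy|=|x-iy|=0$ already contradicts the lemma).
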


\begin{proof}
	Let $\OO$ be a $C_n$-orbit in $\V_{\C}(f,g)$. Suppose $p=[t:x:y] \in \OO\cap \refl(\OO)$. Then 
	$\refl(p) = \rot^{\ell}\cdot p$ for some $\ell \in [n]$, giving that 
	\begin{equation}\label{eq: xyconjref orbit}
	[t:x-iy:x+iy] = \left[t:\omega^{-\ell}(x+iy):\omega^{\ell}(x-iy)\right].
	\end{equation} 
	Then $(x-iy)\cdot \omega^{\ell}(x-iy) = (x+iy)\cdot \omega^{-\ell}(x+iy)$. 
	Taking the modulus of both sides shows that $|x-iy| = |x+iy|$, which 
	contradicts Lemma~\ref{lem: xy ratio real}. Therefore  $\OO\cap\refl(\OO)$ must be empty.
\end{proof}

\begin{remark}
Corollaries~\ref{cor: orbit and xyconj orbit distinct} and \ref{cor: xyconj orbit and uvconj orbit distinct} imply that a $C_n$-orbit $\OO \in \V_\C(f,g)$ is disjoint from both $\conj(\OO)$ and $\refl(\OO)$. However, this tells us nothing about the intersection of orbits $\conj(\OO)$ and $\refl(\OO)$. Their intersection may be nonempty, hence $D_{2n}$-orbits in $\V_\C(f,g)$ do not always have the same cardinality.
\end{remark}

When the matrix $A$ has real entries, both the linear matrix with determinant $F_A$ and its adjugate 
have entries in $\R[t,x+iy, x-iy]$. Therefore to reverse engineer this process and produce 
a matrix in $\CWS_{\R}(n,d)$, we amend the construction to use forms in $\R[t,x+iy, x-iy]$.

\begin{remark}
	Complex conjugation, denoted $\conj$ acts on $\C[t,x,y]$ by conjugating the coefficients of 
	a polynomial in the basis of monomials $t^lx^jy^k$. 
	We claim that the invariant ring of the composition $\refl\circ\conj$ 
	is given by  $\C[t,x, y]^{\langle\refl\circ\conj\rangle}_d = \R[t,x+iy, x-iy]_d$. Indeed,
	any element in $\C[t,x, y]$ is a $\C$-linear combination of forms 
	$t^l (x+iy)^j (x-iy)^k$. Then 
\begin{align*}
		(\refl\circ\conj)\cdot \sum_{l+j+k=d} c_{ljk} t^l (x+iy)^j (x-iy)^k 
		&= \refl\cdot \sum_{l+j+k=d} \overline{c_{ljk}} t^l (x-iy)^j (x+iy)^k\\
			  &= \sum_{l+j+k=d} \overline{c_{ljk}} t^l (x+iy)^j (x-iy)^k, 
	\end{align*}
meaning that the polynomial is invariant if any only if its coefficients $c_{ljk}$ with respect to this basis are real. 
\end{remark}

\begin{lem}\label{lem: dihedral basis}
	If $S \subset \P^2(\C)$ is fixed under $\refl\circ\conj$, i.e.~$\refl(\overline{S}) = S$, 
	then the intersection of the subspace $\es{\ell}_{d-1}$ in \eqref{eq: esl}
	with $ \mathcal{I}(S)_{d-1}$ has a basis in $\R[t,x+iy, x-iy]_{d-1}$.
\end{lem}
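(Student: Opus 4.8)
The plan is to produce an antilinear involution on $\esl_{d-1}\cap\mathcal{I}(S)_{d-1}$ whose fixed subspace is a real form lying inside $\R[t,x+iy,x-iy]_{d-1}$, and then take any $\R$-basis of that real form. Set $\psi:=\refl\circ\conj$. First I would record the two facts about $\psi$ supplied by the preceding remark: $\refl$ and $\conj$ commute (the first is a real-linear substitution in the variables, the second conjugates coefficients), so $\psi$ is an involution; and when a form is written in the monomial basis $\{t^l(x+iy)^j(x-iy)^k\}$, the operator $\psi$ simply conjugates its coefficients, so its fixed subspace is exactly $\R[t,x+iy,x-iy]_{d-1}$. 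In particular $\psi$ is antilinear.

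Next I would check that $\psi$ preserves each ingredient of the intersection. That $\psi(\esl_{d-1})=\esl_{d-1}$ is immediate from the coefficient-conjugation description, since $\esl_{d-1}$ is spanned by those monomials $t^l(x+iy)^j(x-iy)^k$ with $j-k\equiv\ell\bmod n$, each of which $\psi$ fixes. That $\psi(\mathcal{I}(S)_{d-1})=\mathcal{I}(S)_{d-1}$ is where the hypothesis enters: for a form $h$ and a point $p=[t:x:y]\in\P^2(\C)$, unwinding the definitions gives $(\psi h)(p)=\overline{h(\sigma(p))}$, where $\sigma(p):=\overline{\refl(p)}=[\overline{t}:\overline{x}:-\overline{y}]$ is the antiholomorphic involution of $\P^2(\C)$ underlying $\psi$, and the assumption $\refl(\overline{S})=S$ is precisely the statement $\sigma(S)=S$. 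Since $\sigma$ is an involution, $h$ vanishes on $S$ if and only if $h$ vanishes on $\sigma(S)=S$ if and only if $\psi h$ vanishes on $S$; hence $\psi$ maps $\mathcal{I}(S)_{d-1}$ bijectively to itself.

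Putting these together, $\psi$ restricts to an antilinear involution of the complex vector space $V:=\esl_{d-1}\cap\mathcal{I}(S)_{d-1}$. Its $+1$-eigenspace $V^{\psi}$ is a real subspace with $V=V^{\psi}\oplus iV^{\psi}$, so $\dim_{\R}V^{\psi}=\dim_{\C}V$ and an $\R$-basis of $V^{\psi}$ is a $\C$-basis of $V$; by the first step $V^{\psi}\subseteq\R[t,x+iy,x-iy]_{d-1}$, which gives the claim. I expect the only non-formal point to be the computation $(\psi h)(p)=\overline{h(\sigma(p))}$ together with the identification of $\refl(\overline{S})=S$ with $\sigma$-stability of $S$; everything else is the standard fact that an antilinear involution of a finite-dimensional complex vector space has a real form, plus the invariant-ring computation already recorded in the remark.
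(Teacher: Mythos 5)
Your proposal is correct and takes essentially the same route as the paper: show that $\refl\circ\conj$ preserves each of $\esl_{d-1}$ and $\mathcal{I}(S)_{d-1}$ (the former by its monomial basis, the latter from $\refl(\overline{S})=S$), hence their intersection, and then invoke the fact that a complex subspace stable under an antilinear involution has a basis in the fixed-point set. The only difference is that you spell out the real-form argument ($V=V^{\psi}\oplus iV^{\psi}$) and the pointwise identity $(\psi h)(p)=\overline{h(\sigma(p))}$ that the paper leaves implicit; this is sound and adds useful detail rather than changing the method.
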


\begin{proof}

We will argue that each linear subspace is invariant under $\refl\circ\conj$ separately, hence so is their intersection. 
The subspace $\es{\ell}_{d-1}$ is invariant under $\refl\circ\conj$ since 
in spanned by monomials $t^{d-1-j-k} (x+iy)^j (x-iy)^k$, which are invariant. 
The subspace $\mathcal{I}(S)_{d-1}$ is invariant under $\refl\circ\conj$ because
\[
\mathcal{I}(S)_{d-1} = \mathcal{I}(\refl(\overline{S}))_{d-1} = (\refl\circ\conj)(\mathcal{I}(S)_{d-1}).
\]
Since both $\es{\ell}_{d-1}$ and $\mathcal{I}(S)_{d-1}$ are invariant under $\refl\circ\conj$, so is their intersection. 
It therefore has a basis in $\C[t,x,y]^{\langle \refl\circ\conj\rangle}_{d-1} = \R[t,x+iy, x-iy]_{d-1}$.
\end{proof}

\begin{construction}\label{con: d=qn dihedral}{\rm Let $d=qn$ for some $q \in \Z_+$ and $\Gamma = D_{2n}$.\\
Input: Two plane curves $f$ and $g$ satisfying (A1)--(A4).\\
Output:  a matrix $A\in \CWS_{\R}(n,d)$ such that $f=F_A$.
	\begin{enumerate}
		\item Set $g_{11} = g$.
		\item Split up the distinct $d(d-1)$ points of $\V_{\C}(f,g_{11})$ into two disjoint, conjugate sets  $S \cup \overline{S}$ of $C_n$-orbits such that $ \rot(S)=S$ and $\refl(\overline{S}) = S$.
		\item Extend $g_{11}$ to a linearly independent set $\{g_{11}, g_{12}, \ldots, g_{1d}\} \subset \R[t,x+iy, x-iy]_{d-1}$ vanishing on all points of $S$ with 
		$g_{1j} \in \es{1-j}_{d-1}$ and 
		set $g_{j1} = \overline{g_{1j}}$ for all $j \in [d]$.
		\item For $1<i\leq j$, choose $g_{ij} \in \es{i-j}_{d-1}\cap\R[t,x+iy, x-iy]_{d-1}$ 
		so that $g_{11}g_{ij}-\overline{g_{1i}}g_{1j}$ belongs to $\langle f \rangle$ and $g_{ii} \in \R[t,x,y]$.
		\item For $i < j$, set $g_{ji} = \overline{g_{ij}}$ and define 
		$G = (g_{ij})_{i,j} \in (\R[t,x+iy, x-iy]_{d-1})^{d \times d}$.
		\item Define $M = (1/f^{d-2})\cdot \adj{G}$.
		\item For $\ell \in [d]$, write $\ell-1=an+b$ for some integers $a$ and $b$ with $0 \leq b \leq n-1$. Let $P$ be the permutation matrix that takes $\ell=an+b+1$ to $bq+a+1$. Define $M'=PMP^T$ as a matrix with $q \times q$ blocks $M'=(M'_{kl})_{k,l=1}^n$ 
		\[
		 \left(M'_{kl}\right)_{ij} \in \es{k-l}_1 \text{ for } k,l \in [n] \text{ and }  i,j \in [q].
		\]
		\item For each $k$ compute the Cholesky decomposition of each diagonal block $M'_{kk}$ and write $(M')^{-1}_{kk}= U_{k}U^{T}_{k}$ for some $U_k \in \R^{q \times q}$.
		\item Define $U = \diag(U_1,U_2,\ldots,U_k)$ and output $A=\left(P^{T}U^{T}M'UP\right)(0,1,i)$.
	\end{enumerate}}
\end{construction}

\begin{proof}[Proof of Theorem~\ref{thm: d=qn transverse}(b)]
	 Let $g_{11} = g$. Here we follow Construction~\ref{con: d=qn transverse}, but split the intersection points $\V_{\C}(f,g)$ into $S\cup\overline{S}$ so that $\rot(S) = S$ and $\refl(\overline{S})=S$. 
	 Indeed, by Corollaries~\ref{cor: orbit and xyconj orbit distinct} and \ref{cor: xyconj orbit and uvconj orbit distinct}, 
	 for an orbit $\mathcal{O}$ of a point in $\V_{\C}(f,g)$, we may put $\mathcal{O}$ and  
	 $\refl(\overline{\mathcal{O}})$ in $S$ while taking $\refl(\mathcal{O})$ and $\overline{\mathcal{O}}$ in $\overline{S}$. 
	 By Lemma~\ref{lem: dihedral basis}, we can extend $g_{11}$ to a linearly 
	 independent set $\{g_{11}, \hdots, g_{1d}\}$ so that 
	 $g_{1j} \in \es{1-j}_{d-1} \cap \mathcal{I}(S)_{d-1}\cap\R[t,x+iy, x-iy]$. 
	 Now let $g_{j1} = \overline{g_{1j}}$. 
	             The polynomials $f$, $g_{11}$, $\overline{g_{1i}}g_{1j} \in \R[t,x+iy, x-iy]$, 
            so by Lemma~\ref{lem: maxnoether}, 
            we are also able to find $g_{ij}$ such that $g_{ij} \in \es{i-j}_{d-1} \cap \R[t,x+iy, x-iy]_d$ 
            for $1 < i < j$. Moreover, $g_{ii} \in \R[t,x,y]$ since 
            $f, g_{11}, g_{1i}\overline{g_{1i}} \in \R[t,x,y]$. 
            Let $g_{ji} = \overline{g_{ij}}$ for $i < j$ and define $G = (g_{ij})_{i,j}$. 
            Notice that $G \in \R[t,x+iy, x-iy]_{d-1}^{d \times d}$. 
            We then complete the construction as in the proof of Theorem~\ref{thm: d=qn transverse}. 
            The matrix $M = (1/f^{d-2}) \cdot \adj{G}$ satisfies 
            $M \in \R[t,x+iy, x-iy]_1^{d \times d}$ so $M(1,0,0) \in \R^{d \times d}$. 
            
	 Next we will show by permuting rows and columns of $M$ we may get the identity matrix as the coefficient of $t$ in our representation. Consider $M$ as a matrix of $n \times n$ blocks. Each block is a cyclic weighted shift matrix and there are $q^2$ blocks in total. For $\ell \in [d]$, write $\ell-1=an+b$ for some integers $a$ and $b$ with $0 \leq b \leq n-1$. Let $P$ be the permutation matrix that takes $\ell=an+b+1$ to $bq+a+1$. Define $M'=PMP^T$ as a matrix with $q \times q$ blocks $M'=(M'_{kl})_{k,l=1}^n$ with
		\[
		 \left(M'_{kl}\right)_{ij} \in \es{k-l}_1 \text{ for } k,l \in [n] \text{ and }  i,j \in [q]
		\] and $M'(1,0,0)$ is a real block diagonal matrix. By Theorem 3.3 of \cite{PV}, we know $M(1,0,0)$ is definite, thus $M'(1,0,0)$ is definite. For each $k \in [n]$ write $M'_{kk}(1,0,0)^{-1} =U^{T}_k U_k$ for some $U_k \in \R^{d \times d}$. Define $U = \diag(U_1,U_2,\ldots,U_n)$. Then $M''=U M' U^{T}$ is a representation of $f$ since $M''(1,0,0) = I_d$ and  $f = (1/\lambda) \cdot \det\left(M''\right) \text{ for } \lambda = \det(U)\cdot\det(U^{T})$. Lastly, apply the inverse permutation so $f = (1/\lambda)\cdot\det(P^T M'' P)$. Evaluating $(P^T M'' P)(0,1,i)$ gives a cyclic weighted shift matrix of order $n$ and it is real because $P^TM'' P \in \R[t,x+iy, x-iy]_1^{d \times d}$.
\end{proof}

\begin{example}[$d=6$, $n=3$, $q=2$]	
For  $n=3$ and $d=6$, we see that $\V_{\C}(f,g)$ consists of $d(d-1)  = 30$ points, 
which split into $10$ orbits, each of size $3$. 
Each orbit $\mathcal{O}$ is either fixed by $\refl\circ \conj$, in which case $\refl(\mathcal{O}) = \overline{\mathcal{O}}$, 
or not, in which case $\refl(\mathcal{O}) \neq \overline{\mathcal{O}}$. 
Indeed, since there are 10 orbits total, we see that there must be at least one orbit with 
$\refl(\mathcal{O}) = \overline{\mathcal{O}}$.
Regardless, we can split up the 10 orbits under $C_n$ into two conjugate sets of five. 
The union of each collection is a set $S$ of size 15 satisfying $\refl(\overline{S})=S$. 
 The counts and constructions then continue as in Example~\ref{ex:Counts63}, where 
 the assumption $\refl(\overline{S})=S$ lets us take the forms $g_{1j}$ in $\R[t,x+iy,x-iy]$. 
See \cite[Example~3.2.6]{thesis} for a detailed example of this construction. 
\end{example}

\section{The Degenerate Case}
\label{sec: topology}

 \begin{thm}\label{thm:d=qn}
	Let $d=qn$ for some $q \in \Z_{+}$ and suppose $f \in \HH_d^{\Gamma}$.
	\begin{itemize}
		\item[(a)] If $\Gamma = C_n$, then there exists  $A\in \CWS_{\C}(n,d)$ so that $f=F_A$.
		\item[(b)] If $\Gamma = D_{2n}$, then there exists  $A\in \CWS_{\R}(n,d)$ so that $f=F_A$.
	\end{itemize}
\end{thm}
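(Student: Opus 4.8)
The plan is to deduce Theorem~\ref{thm:d=qn} from Theorem~\ref{thm: d=qn transverse} by approximation and compactness, using the operators of Section~\ref{sec:Nuij} to push an arbitrary $f\in\HH_d^\Gamma$ into the range of Theorem~\ref{thm: d=qn transverse}. The key reduction is to show that the set of $f\in\HH_d^\Gamma$ which are strictly hyperbolic, whose curve $\V_\C(f)$ meets the line $\{t=0\}$ transversally, and which admit an invariant interlacer $g$ such that the pair $(f,g)$ satisfies (A1)--(A4), is dense in $\HH_d^\Gamma$ in the Euclidean topology. Granting this, for arbitrary $f\in\HH_d^\Gamma$ I would choose such a sequence $f_m\to f$ with corresponding interlacers $g_m$, apply Theorem~\ref{thm: d=qn transverse} to obtain $A_m\in\CWS_\C(n,d)$ (or $A_m\in\CWS_\R(n,d)$ in case~(b)) with $F_{A_m}=f_m$, and pass to a convergent subsequence of $(A_m)$.

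For the density step, start from $f\in\HH_d^\Gamma$. By Lemma~\ref{lem:T_operator}, $\Ts{d}(f)\in(\HH^\circ)_d^\Gamma$ for every $s>0$ and $\Ts{d}(f)\to f$ as $s\to 0$, so strictly hyperbolic invariant polynomials are dense; since transversality of $\V_\C(f)$ and $\{t=0\}$ is Euclidean-open and fails only on a proper subvariety, a further small perturbation inside $(\HH^\circ)_d^\Gamma$ achieves it. For such an $f$, the scaled partial derivative $g_0=\tfrac1d\,\partial f/\partial t$ lies in $\HH_{d-1}^\Gamma$ — the group fixes $t$, so $\partial/\partial t$ commutes with the $\Gamma$-action — and strictly interlaces $f$; hence the cone of invariant interlacers of $f$ is full-dimensional in $\R[t,x,y]_{d-1}^\Gamma$, with $g_0$ in its interior. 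A Euclidean-generic $g$ in this interior is a strict invariant interlacer, giving (A1) and (A2). The base locus in $\P^2$ of the linear system $\R[t,x,y]_{d-1}^\Gamma$ is contained in $\{t=x^2+y^2=0\}=\{[0:1:i],[0:1:-i]\}$, and for generic $f$ these two points are not on $\V_\C(f)$, so Bertini's theorem gives that a generic such $g$ meets $\V_\C(f)$ transversally, yielding (A3). For (A4), when $n$ is even the degree $d-1$ is odd while every generator of $\R[t,x,y]^\Gamma$ other than $t$ has even degree, so every invariant form of degree $d-1$ is divisible by $t$; thus $\V_\C(f,g,t)=\V_\C(f,t)$, which consists of $d$ distinct points by the transversality of $\V_\C(f)$ and $\{t=0\}$. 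When $n$ is odd, $g(0,x,y)$ is not identically zero, and since the roots of $f(0,x,y)$ avoid the base points $[0:1:\pm i]$ of the system $\{g(0,x,y):g\in\R[t,x,y]_{d-1}^\Gamma\}$, a generic $g$ has $g(0,x,y)$ coprime to $f(0,x,y)$, so $\V_\C(f,g,t)=\emptyset$.

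For the compactness step, given $f_m\to f$ and $A_m$ with $F_{A_m}=f_m$ as above, the coefficients of $F_{A_m}$ converge and hence are bounded; consequently so are the extreme eigenvalues of the Hermitian matrices $\tfrac12(A_m+A_m^*)$ and $\tfrac1{2i}(A_m-A_m^*)$, which occur as roots of $F_{A_m}(\lambda,-1,0)$ and $F_{A_m}(\lambda,0,-1)$. Therefore the operator norms $\|A_m\|$ are uniformly bounded, and $(A_m)$ has a subsequence converging to some $A\in\C^{d\times d}$ (respectively $A\in\R^{d\times d}$). The condition $A_{ij}=0$ for $j-i\not\equiv 1\pmod n$ that defines $\CWS_\C(n,d)$ (respectively $\CWS_\R(n,d)$) is closed, so $A$ lies in it, and $F_A=\lim_m F_{A_m}=\lim_m f_m=f$ by continuity of the determinant. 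This proves both (a) and (b).

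I expect the main obstacle to be the construction of the interlacer $g$ for a generic smooth $f$, that is, verifying that conditions (A1)--(A4) can all be arranged simultaneously: the base-point analysis feeding the Bertini argument for (A3), and the parity-dependent bookkeeping for (A4), both rely on a hands-on understanding of $\R[t,x,y]^\Gamma$ and its restriction to $\{t=0\}$. By comparison, the use of $\Ts{d}$ to regularize $f$, and the boundedness and closedness arguments that complete the limit, are routine.
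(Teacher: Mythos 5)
Your overall architecture --- density of pairs $(f,g)$ satisfying (A1)--(A4), obtained from the operator $\Ts{d}$ of Lemma~\ref{lem:T_operator} together with genericity of the algebraic conditions, followed by an application of Theorem~\ref{thm: d=qn transverse} and a compactness/limit argument on the matrices $A_m$ --- is exactly the route the paper takes (Theorem~\ref{thm: d=qn limit} plus the proof of Theorem~\ref{thm:d=qn}); your limit step, bounding $\|A_m\|$ via the eigenvalues of $\tfrac12(A_m+A_m^*)$ and $\tfrac1{2i}(A_m-A_m^*)$ and using closedness of the shift pattern, is essentially verbatim the paper's. The only divergence is in how genericity of (A3) and (A4) is certified: the paper exhibits one explicit pair with the desired properties and invokes Zariski-openness via projective elimination (Propositions~\ref{prop:smooth}, \ref{prop:transverse}, \ref{prop:infty}), whereas you use the derivative interlacer plus a Bertini and base-locus analysis.

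That Bertini step has a genuine hole when $n$ is even. In that case every element of $\R[t,x,y]_{d-1}^{\Gamma}$ is divisible by $t$ (as you yourself use for (A4)), so the base locus of the linear system is the entire line $\{t=0\}$, not merely $\{[0:1:\pm i]\}$; your claim that the base locus is contained in those two points is false for $n$ even. Bertini then yields transversality of $\V_{\C}(f)\cap\V_{\C}(g)$ only away from the base locus, and says nothing at the $d$ points of $\V_{\C}(f,g,t)=\V_{\C}(f,t)$ --- precisely the points that (A4) forces into the intersection, and where (A3) is needed for Lemma~\ref{lem: xy ratio real} and the orbit counts. The fix is to argue transversality there separately: write $g=th$ with $h\in\R[t,x,y]_{d-2}^{\Gamma}$; at $p\in\V_{\C}(f,t)$ one has $\nabla g(p)=h(p)\,\nabla t$, so transversality at $p$ holds iff $h(p)\neq 0$ and $\nabla f(p)$ is not proportional to $\nabla t$. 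The latter is your arranged transversality of $\V_{\C}(f)$ with $\{t=0\}$, and the former holds for generic $h$ of degree $d-2$ once one checks that $p\neq[0:1:\pm i]$ (which generic $f$ guarantees). With that patch the proposal goes through; the paper sidesteps the issue entirely by producing an explicit transverse pair in Proposition~\ref{prop:transverse} and using that non-transversality is a Zariski-closed condition on $(f,g)$.
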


Here we deal with assumptions (A1)--(A4) posed in Section~\ref{sec: d=qn smooth}. 
To start, we show that the algebraic assumptions hold generically. 

\begin{prop}\label{prop:smooth}
For $d = qn$ and $\Gamma = C_n$ or $D_{2n}$, a generic invariant form $f\in \C[t,x,y]_d^{\Gamma}$ defines a smooth plane curve $\V_{\C}(f) \subset \P^2(\C)$.
\end{prop}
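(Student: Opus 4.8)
The plan is to deduce generic smoothness from the existence of a single smooth invariant curve, using that smoothness is a Zariski-open condition. First I would record that the set
\[
\mathcal{D} \ = \ \{\, f \in \C[t,x,y]_d^{\Gamma} \ : \ \V_{\C}(f) \text{ is singular}\,\}
\]
is Zariski closed in the linear space $\C[t,x,y]_d^{\Gamma}$. Indeed, the incidence set $\{(f,p) : \partial_t f(p) = \partial_x f(p) = \partial_y f(p) = 0\} \subset \C[t,x,y]_d^{\Gamma}\times \P^2(\C)$ is cut out by equations that are bihomogeneous in the coefficients of $f$ and in $p$, hence closed; by the Euler relation a common zero of the three partials is automatically a zero of $f$, so this incidence set is exactly the set of pairs $(f,p)$ with $p$ a singular point of $\V_{\C}(f)$; and $\mathcal{D}$ is its image under the projection to $\C[t,x,y]_d^{\Gamma}$, which is closed since $\P^2(\C)$ is complete. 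Thus it suffices to exhibit one $f\in\C[t,x,y]_d^{\Gamma}$ with $\V_{\C}(f)$ smooth: then $\mathcal{D}$ is a \emph{proper} closed subvariety and its complement is the desired dense Zariski-open set.

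Second, I would take the Fermat-type form
\[
f_0 \ = \ t^d + (x+iy)^d + (x-iy)^d.
\]
Because $d = qn$, we have $(x+iy)^d = \big((x+iy)^n\big)^q$ and $(x-iy)^d = \big((x-iy)^n\big)^q$, and since $\rot$ scales $x+iy$ by $e^{-2\pi i/n}$ it fixes $(x+iy)^n$; similarly for $x-iy$; and $t^d$ is fixed by $\Gamma$. Hence $f_0\in \C[t,x,y]_d^{C_n}$, and since $\refl$ interchanges $x+iy$ and $x-iy$ and fixes $t$, in fact $f_0\in \C[t,x,y]_d^{D_{2n}}$, so $f_0$ is $\Gamma$-invariant for both choices of $\Gamma$ (and, incidentally, $f_0 = t^d + 2\,\mathfrak{R}[(x+iy)^d]$ has real coefficients). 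After the invertible $\C$-linear change of coordinates $(t,x,y)\mapsto (t,\,x+iy,\,x-iy)$ on $\C^3$, the curve $\V_{\C}(f_0)$ becomes the Fermat curve $\{s_0^d + s_1^d + s_2^d = 0\}\subset \P^2(\C)$, whose partials $d\,s_j^{d-1}$ have no common zero in $\P^2(\C)$; hence $\V_{\C}(f_0)$ is smooth.

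Combining the two steps, $\mathcal{D}$ is a proper Zariski-closed subset of $\C[t,x,y]_d^{\Gamma}$, so its complement is nonempty, Zariski open, and therefore dense, which is exactly the assertion. I do not expect a serious obstacle here; the only point requiring care is that the discriminant argument lives on the linear subspace $\C[t,x,y]_d^{\Gamma}$ rather than on all of $\C[t,x,y]_d$ — so one cannot simply cite Bertini's theorem (the invariant linear system need not be base-point free), and one must verify that the explicit form $f_0$ actually lies in the invariant subspace. This is precisely the step that uses the hypothesis $d\in n\Z_+$: for general $d$ no power of $x+iy$ is $C_n$-invariant, and as noted in Section~\ref{subsec:singularities} there are degrees for which every invariant hyperbolic form is forced to be singular, so the restriction on $d$ is essential.
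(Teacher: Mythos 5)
Your proof is correct and follows essentially the same route as the paper: an incidence-variety/elimination argument showing that the singular locus is Zariski closed in $\C[t,x,y]_d^{\Gamma}$, combined with the same explicit smooth invariant example $t^d+(x+iy)^d+(x-iy)^d$. Your write-up simply fills in the details (the Euler relation, the explicit invariance check using $d=qn$, and the coordinate change to the Fermat curve) that the paper leaves implicit.
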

\begin{proof}
Consider the subvariety $\mathcal{X} $ of $\P(\C[t,x,y]_d^{\Gamma}) \times \P^2(\C)$ given by 
\[
\mathcal{X} \ = \ \left \{ (f,p)\in \P(\C[t,x,y]_d^{\Gamma}) \times \P^2(\C) \ : \  \nabla f(p) = (0,0,0)\right \}
\]
By the Projective Elimination Theorem (e.g. \cite[Theorem~10.6]{hassett}), its image under the 
projection $\pi_1(f,p) = f$ is a subvariety of  $\P(\C[t,x,y]_d^{\Gamma})$.  
Therefore the image is either the whole space, meaning that either 
every polynomial in $\C[t,x,y]_d^{\Gamma}$ defines a singular curve, or 
belongs to a proper subvariety of  $\C[t,x,y]_d^{\Gamma}$, meaning that a 
generic polynomial  in $\C[t,x,y]_d^{\Gamma}$ defines a smooth curve. To finish the proof, we note that 
$t^d + (x+i y)^d+ (x-i y)^d$ belongs to $\C[t,x,y]_d^{\Gamma}$ and defines a smooth plane curve. 
\end{proof}

\begin{prop}\label{prop:transverse}
For $d = qn$ and $\Gamma = C_n$ or $D_{2n}$ and any $e\in \Z_+$, 
the plane curves defined by generic invariant forms $f, g\in \C[t,x,y]^{\Gamma}$ with $\deg(f) =d$ and $\deg(g) = e$  intersect transversely. 
\end{prop}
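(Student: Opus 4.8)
The plan is to mimic the proof of Proposition~\ref{prop:smooth}, using an incidence variety and the Projective Elimination Theorem to reduce the statement to the exhibition of a single transverse pair. Concretely, I would fix the degrees $d = qn$ and $e$, and consider the incidence variety
\[
\mathcal{Y} \ = \ \left\{ (f,g,p) \in \P(\C[t,x,y]_d^{\Gamma}) \times \P(\C[t,x,y]_e^{\Gamma}) \times \P^2(\C) \ : \ f(p) = g(p) = 0, \ \nabla f(p) \parallel \nabla g(p) \right\},
\]
where the tangency condition ``$\nabla f(p) \parallel \nabla g(p)$'' is encoded by the vanishing of the $2\times 2$ minors of the $2\times 3$ matrix with rows $\nabla f(p)$ and $\nabla g(p)$ (equivalently, by $\nabla f(p) \times \nabla g(p) = 0$). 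This is a closed subvariety. Projecting away the $\P^2(\C)$ factor, the Projective Elimination Theorem (\cite[Theorem~10.6]{hassett}, as cited above) guarantees that the image $Z \subseteq \P(\C[t,x,y]_d^{\Gamma}) \times \P(\C[t,x,y]_e^{\Gamma})$ is Zariski closed. So either $Z$ is everything, in which case \emph{every} invariant pair of the given degrees meets non-transversely, or $Z$ is a proper subvariety and a generic invariant pair $(f,g)$ intersects transversely. Note one should also intersect with the locus where $\V_{\C}(f)$ is smooth, which is generic by Proposition~\ref{prop:smooth}; on that locus, transversality of the intersection with $\V_{\C}(g)$ at a common point $p$ is exactly the condition that $\nabla f(p)$ and $\nabla g(p)$ are linearly independent, so non-transversality is precisely membership in $Z$ (together with the degenerate possibility that $f$ and $g$ share a component, which forces infinitely many common points and in particular a point where the gradients are dependent, hence is also captured).

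To rule out the first alternative, it suffices to produce one invariant pair $(f_0,g_0)$ of degrees $d$ and $e$ that intersects transversely. The natural candidates are built from the generators of the invariant ring. Using the smooth form $f_0 = t^d + (x+iy)^d + (x-iy)^d$ from Proposition~\ref{prop:smooth} and a similarly chosen $g_0 = t^e + c\big((x+iy)^e + (x-iy)^e\big)$ for generic $c$ (or, more robustly, a generic invariant form of degree $e$), one computes that all $d\cdot e$ intersection points are simple. Concretely, on the affine chart $t = 1$ one can analyze the system $f_0 = g_0 = 0$ directly: writing $u = x+iy$, $v = x-iy$, the curves become $1 + u^d + v^d = 0$ and $1 + c(u^e + v^e) = 0$, and one checks that for generic $c$ the resulting zero-dimensional system is radical with all $de$ solutions distinct and with independent gradients. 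Alternatively, and more cleanly, one can simply invoke the fact that the dual variety and the discriminant locus are proper, so a generic line (here, a generic pencil inside the space of invariant forms of degree $e$ through a fixed smooth $f_0$) avoids the locus of non-transverse intersections once a single transverse member is exhibited.

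The main obstacle I anticipate is the existence of a single explicit transverse pair: one must be careful that the forced symmetry does not conspire to make every invariant pair tangent somewhere. This is genuinely a constraint when, for instance, the only invariant forms of a given low degree $e$ are very special (e.g.\ powers of $t$ or of $x^2 + y^2$). For such small $e$ one may need to handle things by hand — but since the statement is about \emph{generic} $f$ of degree $d = qn$ against \emph{generic} $g$ of degree $e$, and the invariant ring in each fixed positive degree contains forms that are ``generic enough'' (the generators $t$, $x^2+y^2$, $\mathfrak{R}[(x+iy)^n]$, and for $C_n$ also $\mathfrak{I}[(x+iy)^n]$ generate a large family of products), one can always find a transverse representative; in the worst degenerate cases the transversality statement is vacuous or trivially checked because $\V_{\C}(f) \cap \V_{\C}(g)$ has a predictable structure. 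So the real content is: (i) set up the incidence variety and apply elimination, and (ii) verify one transverse pair, with the bookkeeping around shared components and the smoothness locus being routine.
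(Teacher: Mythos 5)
Your reduction is exactly the paper's: set up the incidence variety of non-transverse triples $(f,g,p)$, apply the Projective Elimination Theorem to see that the bad locus of pairs is Zariski closed, and then exhibit a single transverse invariant pair. The gap is in the exhibition, which is where all the actual content of this proposition lives. Your proposed witness $g_0 = t^e + c\bigl((x+iy)^e+(x-iy)^e\bigr)$ is not $\Gamma$-invariant unless $n \mid e$: the monomial $t^{e-j-k}(x+iy)^j(x-iy)^k$ is $C_n$-invariant only when $j\equiv k \bmod n$, so $(x+iy)^e$ lies in $\C[t,x,y]^{C_n}_e$ only for $e\equiv 0 \bmod n$, whereas the proposition is asserted for \emph{every} $e\in\Z_+$. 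Your fallback, ``take a generic invariant form of degree $e$,'' is circular — genericity of transversality is precisely what the elimination step reduces to the existence of one explicit witness. And the degenerate cases you defer are not vacuous: for $1\le e<n$ every invariant form of degree $e$ is a polynomial in $t$ and $x^2+y^2$ alone, hence every such $g$ passes through the two points $[0:1:\pm i]$ and is highly constrained; one must actually check that some such $g$ meets a suitable $f$ of degree $qn$ transversely.

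The paper does this by hand: it writes a degree-$n$ invariant form as $a(x+iy)^n+b(x-iy)^n+\sum_j c_j t^{n-j}(x^2+y^2)^j$, checks directly that for $a,b\neq 0$ it meets $\V_\C(t)$ transversely and misses $\V_\C(x^2+y^2)$ along $t=0$, and then invokes Bertini's theorem for the pencil $\lambda(x^2+y^2)+\mu t^2$ to get a transverse degree-$2$ partner. The witness in general degrees is then assembled as a product: $f$ a product of $q$ generic degree-$n$ invariant forms and $g$ a product of $\lfloor e/2\rfloor$ generic invariant conics times $t^{\delta}$. Something of this nature (or a direct computation with a correctly invariant $g_0$, e.g.\ built from $t$, $x^2+y^2$, and $(x+iy)^n+(x-iy)^n$) is needed to close your argument; as written, the second half of the proof is missing.
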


\begin{proof}
First, we argue that it suffices to produce one example of a pair of forms
$f,g$ in $\C[t,x,y]^{\Gamma}$ with $\deg(f) =d$,  $\deg(g)=e$ whose plane curves 
intersect transversely. This is because the intersecting transversely is a Zariski-open condition on $f,g$. 
More precisely, consider the subvariety 
$\mathcal{Y} \subset \P( \C[t,x,y]^{\Gamma}_d) \times \P( \C[t,x,y]^{\Gamma}_{e})  \times \P^2(\C)$ defined by 
\[
\mathcal{Y} =  \biggl\{  (f,g,p):   f(p) = 0, 
g(p) = 0, {\rm rank} \begin{pmatrix} \nabla f(p) \\ \nabla g(p) \end{pmatrix} \leq 1\biggl\}.
\]
Again, by the Projective Elimination Theorem \cite[Theorem~10.6]{hassett}, the image of $\mathcal{Y}$ under the projection $\pi(f,g,p) = (f,g)$ is a Zariski-closed set.  By construction, it is the set of pairs $(f,g)$ for which 
the intersection $\V_{\C}(f)\cap \V_{\C}(g)$ is non-transverse. We need to show that this does not occur for all pairs.

First we consider the special case $d=n$ and $e=1,2$. Note that since $f$ is invariant under the action of $C_n$, it has the form
\[
f(t,x,y) = a(x+i y)^n + b(x-iy)^n +\sum_{j=0}^{\lfloor n/2\rfloor}c_j t^{n-j} (x^2+y^2)^{j}
\]
where $a,b,c_0, \hdots, c_{\lfloor n/2\rfloor}\in \C$ and $a=b$ if $\Gamma=D_{2n}$. 
Note that when $a,b$ are non-zero, the intersection of $\V_{\C}(f)$ with $\V_{\C}(t)$ is transverse for non-zero $a,b$. 
Also if $a,b$ are nonzero, then  $\V_{\C}(f)$ and $\V_{\C}(x^2+y^2)$ have no common points with $t=0$. 
Then by Bertini's theorem, for generic $\lambda, \mu\in \C$, the intersection 
of $\V_{\C}(f)$ and $\V_{\C}(\lambda(x^2+y^2) + \mu t^2)$ is transverse \cite{BertiniBook}.  

Now we construct the desired pair $f, g\in \C[t,x,y]^{\Gamma}$ with $\deg(f) = d$ and $\deg(g)=e$. 
Let $f$ be the product of $q$ generic forms in $\C[t,x,y]_n^{\Gamma}$ of degree $n$, and 
let $g$ be the product of $\lfloor \frac{e}{2}\rfloor$ generic quadratic forms $ \C[t,x,y]_2^{\Gamma}$
and $t^\delta$ where $\delta = 2(\frac{e}{2}  - \lfloor \frac{e}{2} \rfloor)$. 
Then by the argument above, $\V_{\C}(f)$ and $\V_{\C}(g)$ intersect transversely. 
\end{proof} 

\begin{prop}\label{prop:infty}
Let $d = qn$. For 
$\Gamma = C_n$ or $D_{2n}$ and generic invariant forms $f,g$ in $ \C[t,x,y]^{\Gamma}$ with 
$\deg(f) =d$ and $\deg(g) = d-1$, the number of intersection points on the line $t=0$ is given by  
\[
|\V_{\C}(f,g,t)| = \begin{cases} 
0 & \text{ if $n$ is odd,}\\
d & \text{ if $n$ is even.}  
\end{cases}
\]
\end{prop}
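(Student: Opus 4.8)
The plan is to reduce the statement to a count of intersection points at $t=0$ that can be computed on a convenient generic example, using the fact that ``$|\V_{\C}(f,g,t)|$ equals a fixed number'' is itself a Zariski-open/closed condition. More precisely, first I would restrict $f$ and $g$ to the line $\ell = \V_{\C}(t)$, where the coordinate ring is $\C[x,y]$, and observe that since $f$ and $g$ are invariant, the restrictions $f(0,x,y)$ and $g(0,x,y)$ lie in the invariant ring of the induced $C_n$- or $D_{2n}$-action on $\C[x,y]$. That action scales $x+iy$ and $x-iy$ by $\omega^{\mp 1}$, so $\C[x,y]^{C_n} = \C[x^2+y^2,\ \mathfrak R[(x+iy)^n],\ \mathfrak I[(x+iy)^n]]$ and similarly for $D_{2n}$; writing $u = x+iy$, $v = x-iy$, a homogeneous invariant form of degree $m$ restricted to $t=0$ is a $\C$-linear combination of monomials $u^j v^k$ with $j+k = m$ and $j - k \equiv 0 \pmod n$. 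I would then count, for generic coefficients, the number of common zeros of $f(0,u,v)$ and $g(0,u,v)$ in $\P^1$, being careful to track multiplicities and the behaviour at the base points $[u:v] = [1:0]$ and $[0:1]$, which are forced on every invariant form whenever the relevant exponent congruences cannot be met.

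The key computation is the parity dichotomy. When $n$ is odd, $\deg f = d = qn$ is a multiple of $n$ so $f(0,u,v)$ can contain the monomials $u^d$ and $v^d$ with nonzero generic coefficient, hence $f(0,\cdot,\cdot)$ has no base point; but $\deg g = d-1 = qn - 1$, and the congruence $j - k \equiv 0 \pmod n$ together with $j + k = qn-1$ has no solution when $n$ is odd (since $j - k$ and $j + k$ would have the same parity, forcing $qn - 1 \equiv 0 \pmod 2$ issues — more directly, $j-k \equiv 0$ and $j+k$ odd is impossible mod $2$ only when... ) — actually the cleaner statement, and the one I would use, is that $g(0,u,v) \equiv 0$ identically when $n$ is odd, because a degree-$(d-1)$ invariant form forced to be a combination of $t \cdot(\text{stuff})$ plus pure $u,v$-powers has no pure $u,v$-part in degree $qn-1$; hence $\V_{\C}(f,g,t) = \V_{\C}(f,t) \cap \ell$ is all of $\V_{\C}(g)\cap\ell = \emptyset$, giving $0$. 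When $n$ is even, $d - 1 = qn-1$ is odd, $g(0,u,v)$ is again either identically zero or not, but here one checks via Lemma~\ref{lem: factor of t}-type reasoning that $g$ has a factor of $t$ forced in the relevant eigenspaces, so that $\V_{\C}(g) \supset \V_{\C}(t)$; then $\V_{\C}(f,g,t) = \V_{\C}(f)\cap \ell$, and since $f(0,u,v)$ is a generic invariant binary form of degree $d$ (with the two extremal monomials $u^d, v^d$ present), it has exactly $d$ distinct roots in $\P^1$, giving $d$.

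So the skeleton is: (1) invoke the Projective Elimination Theorem as in Propositions~\ref{prop:smooth} and \ref{prop:transverse} to reduce to exhibiting one pair $(f,g)$ realizing the asserted count, or equivalently to a generic computation on $\ell = \V_{\C}(t)$; (2) identify $f|_{t=0}$ and $g|_{t=0}$ inside the invariant ring of the $\P^1$-action and, using the congruence $j-k\equiv 0 \pmod n$ on the exponents of $u^jv^k$, determine for $n$ odd that $g|_{t=0} \equiv 0$ (whence the intersection lies off $\ell$ since $f|_{t=0}$ has no base point and $g$ imposes no condition there), and for $n$ even that $t \mid g$ (whence $\V_{\C}(g) \supseteq \ell$); (3) in the even case, compute $|\V_{\C}(f,t)|$ by noting $f|_{t=0}$ is a degree-$d$ binary form whose generic choice has $d$ simple roots in $\P^1(\C)$ — including none at the base points $[1:0],[0:1]$ since those monomials appear — giving $d$; (4) assemble. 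The main obstacle is bookkeeping the two base points $[u:v]=[1:0]$ and $[0:1]$ and the parity constraints carefully enough to be sure that in the odd case $g$ really does vanish on all of $\ell$ (so that the intersection with $\ell$ is governed entirely by $f$, which misses $\ell$ transversally) rather than cutting out a finite nonempty set — i.e.\ correctly reconciling ``$g$ restricted to $t=0$ is zero'' with ``$g$ is not divisible by $t$'' for $n$ odd, which hinges on the precise exponent arithmetic of the invariant monomial basis.
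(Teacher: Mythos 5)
Your overall strategy (reduce by elimination theory to a single computation on the line $t=0$, and use the exponent arithmetic of the invariant monomials $u^jv^k$) is the paper's strategy, and your treatment of the even case is essentially correct: for $n$ even, $t$ divides every invariant form of the odd degree $d-1$, so $\V_{\C}(f,g,t)=\V_{\C}(f,t)$, which generically has $d$ points. But the odd case --- which is the substantive half --- contains a genuine error. Your key claim there, that $g(0,x,y)\equiv 0$ for an invariant $g$ of degree $qn-1$ when $n$ is odd, is false. The congruence $j-k\equiv 0\pmod n$ with $j+k=qn-1$ does have solutions for $n$ odd: take $j=k=(qn-1)/2$ when $q$ is odd, or $j-k=\pm n$ when $q$ is even (the point is that $j-k$ need not equal $0$, only be a multiple of $n$, and an odd multiple of an odd $n$ has the right parity). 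Concretely, for $n=3$, $q=1$ the form $g=x^2+y^2=uv$ is invariant of degree $d-1=2$ and restricts to $uv\neq 0$ on $t=0$. So $g|_{t=0}$ is generically a \emph{nonzero} binary form with roots on the line, and you cannot conclude the count is $0$ from its vanishing.

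Moreover, even if your claim were true, the inference would run the wrong way: $g|_{t=0}\equiv 0$ is literally equivalent to $t\mid g$, which would force $\V_{\C}(t)\subseteq\V_{\C}(g)$ and hence $|\V_{\C}(f,g,t)|=|\V_{\C}(f,t)|=d$, not $0$. (You flag exactly this tension in your last sentence; it cannot be reconciled because the two statements you want to hold simultaneously are negations of each other.) What the odd case actually requires is showing that the two nonzero binary forms $f(0,x,y)$ and $g(0,x,y)$ have \emph{disjoint} root sets in $\P^1$ for generic invariant $f,g$. The paper does this by writing down an explicit pair of products of invariant forms in $u,v$ whose zeros at $t=0$ are visibly disjoint. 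Alternatively, one can note that every monomial of $g|_{t=0}$ is divisible by $(uv)^{(n-1)/2}$, so the base locus of that linear system is $\{[0:1:\pm i]\}$, where a generic $f$ (whose restriction contains $u^d$ and $v^d$) does not vanish; away from the base locus a generic member of $g$'s system avoids the $d$ roots of $f|_{t=0}$. Either route fills the gap; as written, your odd case does not.
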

\begin{proof}
We first prove something slightly different. 
Let $e\in \Z_+$ be an integer satisfying  $e\in 2\N + n \N$ where $q\cdot e$ is even. 
We claim that generic invariant forms $f,g\in \C[t,x,y]^{\Gamma}$ with 
$\deg(f) =d$ and $\deg(g) = e$ satisfy $\V_{\C}(f,g,t)=\emptyset$. 

By the Projective Elimination Theorem \cite[Theorem~10.6]{hassett}, 
the set of $ (f,g) \in \C[t,x,y]_d^{\Gamma} \times \C[t,x,y]_e^{\Gamma} $
for which $\V_{\C}(f,g,t)$ is non-empty is Zariski closed.
Therefore it suffices to show that it is not the whole space. 

Let $(a,b)\in \N^2$ so that $2a+nb=e$.  
Note that if $e$ is even, then we may take $b$ to be even. 
To see this, note that $e = 2a +nb$ implies that at least one of $b$ and $n$ is even.  If $n=2k$ is even and $b$ is odd, then $b\geq 1$ and 
we may replace the pair $(a,b)$ with $(a+k, b-1)$. 

For an integer $m\in \Z_+$, let $\chi(m)$ be $0$ if $m$ is even and $1$ if $m$ is odd. Then consider polynomials
\[
f = (u^n+v^n)^{\chi(q)}\prod_{j=1}^{\lfloor q/2\rfloor }(u^n+r_jv^n)(r_ju^n + v^n) 
\text{ and }
g = (uv)^a (u^n+v^n)^{\chi(b)}\prod_{k=1}^{\lfloor b/2\rfloor }(u^n+s_kv^n)(s_ku^n + v^n). 
\]
where $r_1, \hdots, r_{\lfloor q/2\rfloor}, s_1, \hdots, s_{\lfloor b/2\rfloor}\in \C\backslash\{0,1\}$  are all distinct and $u = x+iy$, $v=x-iy$. 
We claim that both $f,g$ are invariant under the dihedral group and have no common roots with $t=0$, so long as $\chi(q)\cdot \chi(b)=0$. 
Let $\omega = e^{2\pi i /n}$.  
For invariance, note that both $f$, $g$ are invariant under the change of coordinates $(t,u,v)\mapsto (t,\overline{\omega} u, \omega v)$,  
which is the action of $\rot$ in coordinates $(t,u,v)$, 
as well as the map $(t,u,v)\mapsto (t,v,u)$, which is the action of $\refl$. 

The zeros of $f$ with $t=0$ consist of the points $[t:u:v]=[0:1  : \lambda \omega]$ where $\omega$ is an $n$th root of unity and 
$\lambda=1, r_k, 1/r_k$ for $k=1, \hdots, \lfloor q/2\rfloor$. Moreover there is only such a root with $\lambda=1$ if $q$ is odd. 
Similarly, the zeros of $g$ with $t=0$ consist of the points $[t:u:v]=[0:1:0], [0:0:1]$ if $a\geq 1$ and $[t:u:v]=[0:1  : \lambda \omega]$ 
where $\lambda=1, s_k, 1/s_k$ for $k=1, \hdots, \lfloor b/2\rfloor$, where $\lambda=1$ gives a root only if $b$ is odd. 
Therefore so long as at least one of $q$ or $b$ is even, $\V_{\C}(f,g,t)$ is empty. 

Now suppose that $e=d-1= qn-1$ and $n$ is odd. Then $qn$ has the same parity as $q$, which is different than the parity of $e$. 
Furthermore, $e = (n-1) +(q-1)n$. Since $n-1$ is even, this belongs to $2\N+n\N$.  
The argument from above then shows that $\V_{\C}(f,g,t)=\emptyset$. 

If $n$ is even, then so is $d$, meaning that $d-1$ is odd.  
By Lemma~\ref{lem: factor of t}, every polynomial $g~\in~\C[t,x,y]^{\Gamma}$ of degree $d-1$ has a factor of $t$, meaning that 
it can be written as $g~=~t\cdot~h$ where $h\in \C[t,x,y]^{\Gamma}_{d-2}$. 
Taking $e=d-2$ above shows that $\V_\C(f,h,t)=\emptyset$. Therefore $\V_\C(f,g,t)~=~\V_\C(f,t)$. Since 
$f$ has degree $d$, this consists of $d$ points generically, as is achieved by the explicit example $f$ above. 
\end{proof}

Having dealt with the algebraic conditions of non-singularity, now we address the semi-algebraic conditions of hyperbolicity and interlacing.

\begin{thm} \label{thm: d=qn limit}
For $d = qn$ and $\Gamma = C_n$ or $D_{2n}$, 
every polynomial in $\mathcal{H}_d^{\Gamma} $ 
is a limit of polynomials $f \in (\mathcal{H}^{\circ})_d^{\Gamma}$ for which  
there exists $ g\in \mathcal{H}_{d-1}^{\Gamma}$ such that 
\begin{itemize}
\item[(i)] $\V_\C(f)$ is smooth, 
\item[(ii)] $g$ interlaces $f$, 
\item[(iii)] $\V_{\C}(f)\cap \V_{\C}(g)$  is transverse, and 
\item[(iv)]
$|\V_{\C}(f,g,t)| =\begin{cases} 0 & \text{if $n$ is odd}, \\ d & \text{if $n$ is even.} \end{cases} $
\end{itemize}
\end{thm}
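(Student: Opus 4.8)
\textbf{Proof plan for Theorem~\ref{thm: d=qn limit}.}
The plan is to fix $f_0 \in \mathcal{H}_d^{\Gamma}$ and produce, for every $\varepsilon>0$, a pair $(f,g)$ with $\|f-f_0\|<\varepsilon$ satisfying (i)--(iv). The starting point is the Nuij-type machinery of Section~\ref{sec:Nuij}: by Lemma~\ref{lem:T_operator}, for small $s>0$ the polynomial $\Ts{d}(f_0)$ lies in $(\mathcal{H}^\circ)_d^{\Gamma}$ and tends to $f_0$ as $s\to 0$, so after this perturbation we may already assume $\V_{\R}(f)$ is smooth; combined with a further generic perturbation inside $\C[t,x,y]_d^{\Gamma}$ and Proposition~\ref{prop:smooth}, we can assume $\V_{\C}(f)$ is smooth, giving (i). The real issue is to simultaneously produce an interlacer $g \in \mathcal{H}_{d-1}^{\Gamma}$ meeting (ii), and to arrange the genericity conditions (iii), (iv) \emph{without destroying} interlacing, since (iii) and (iv) are Zariski-open but interlacing is only a closed semialgebraic condition.

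First I would address (ii): every hyperbolic $f$ has a hyperbolic interlacer — for instance a suitable directional derivative or, in the invariant setting, the invariant part of $\partial f/\partial t$, which lies in $\R[t,x,y]_{d-1}^{\Gamma}$ and strictly interlaces $f$ when $f$ is strictly hyperbolic (this is the classical Obreschkoff/Nuij fact that $\partial_t f$ interlaces $f$, and $\partial_t$ commutes with the $\Gamma$-action since $\Gamma$ fixes $t$). So take $g_0 = \partial f/\partial t$ (rescaled so the $t^{d-1}$-coefficient is $1$); this is a strict interlacer, hence interlacing is \emph{stable} under small perturbations of $g_0$. Now I would invoke the genericity results already proved: by Propositions~\ref{prop:transverse} and~\ref{prop:infty}, the set of pairs $(f,g)\in \C[t,x,y]_d^{\Gamma}\times \C[t,x,y]_{d-1}^{\Gamma}$ with $\V_{\C}(f)$ smooth, $\V_{\C}(f)\pitchfork\V_{\C}(g)$, and $|\V_{\C}(f,g,t)|$ equal to the prescribed value is Zariski-dense (the complement is contained in a proper subvariety). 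Therefore we may choose a real pair $(f,g)$ arbitrarily close to $(f, g_0)$ — in particular with $g$ still strictly interlacing $f$, hence $g\in \mathcal{H}_{d-1}^{\Gamma}$ — for which (i), (iii), (iv) all hold. Chaining the perturbations $f_0 \rightsquigarrow \Ts{d}(f_0) \rightsquigarrow f$ and letting $s\to 0$ and the genericity perturbation $\to 0$ exhibits $f_0$ as a limit of such $f$.

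One technical point that needs care: the Zariski-dense sets in Propositions~\ref{prop:transverse} and~\ref{prop:infty} live in complex projective space, and we need a \emph{real} approximating pair lying additionally in the Euclidean-open interlacing region. This is fine because a nonempty Zariski-open subset of $\P(\C[t,x,y]^{\Gamma})$ defined over $\R$ has real points arbitrarily close to any given real point that does not lie in the exceptional subvariety — and the exceptional subvariety, being proper, cannot contain the Euclidean-open set of pairs $(f,g)$ with $f$ strictly hyperbolic and $g$ a strict interlacer (which is nonempty by the explicit product example $t^{\delta}\prod(t^2-r_i(x^2+y^2))$ from the proof of Theorem~\ref{thm:InvHyp}, together with its $\partial_t$). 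So the intersection of the three Zariski-open conditions with this Euclidean-open region is itself a nonempty Euclidean-open set, and we pick $(f,g)$ there.

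I expect the main obstacle to be bookkeeping rather than mathematics: verifying that the perturbation can be done so that \emph{all} of (i)--(iv) hold simultaneously for a single pair $(f,g)$, i.e. that the relevant exceptional subvarieties (one for singularity of $f$, one for non-transversality, one for the wrong count at $t=0$) are each proper and hence their union is proper, so the good set is Zariski-dense and meets the interlacing region. Each of these properness statements is exactly the content of Propositions~\ref{prop:smooth},~\ref{prop:transverse}, and~\ref{prop:infty}, so the proof is essentially an assembly of those three facts plus the Nuij operator $\Ts{d}$ and the observation that $\partial_t f$ is an invariant strict interlacer.
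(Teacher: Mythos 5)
Your proposal is correct and follows essentially the same route as the paper: Theorem~\ref{thm:InvHyp} reduces the problem to strictly hyperbolic $f$, the Euclidean-openness of strict interlacing makes condition (ii) stable under perturbation, and Propositions~\ref{prop:smooth}, \ref{prop:transverse}, and \ref{prop:infty} make (i), (iii), (iv) hold on a dense (Zariski-open, real) set, so the intersection is dense and projects onto a dense subset of $(\mathcal{H}^{\circ})_d^{\Gamma}$. The only cosmetic difference is that you exhibit the explicit invariant strict interlacer $\partial f/\partial t$, whereas the paper instead invokes the full-dimensional open cone of strict interlacers from \cite{KPVinterlacers}.
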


\begin{proof}
For any strictly hyperbolic $f\in \R[t,x,y]_d$ the set of polynomials $g\in \R[t,x,y]_{d-1}$ that interlace $f$ 
with respect to $(1,0,0)$ is a full-dimensional convex cone, 
whose interior consists of those $g$ 
which strictly interlace $f$. See e.g. \cite[Section 6]{KPVinterlacers}. 
Then by Theorem~\ref{thm:InvHyp}, the set  
\[
\mathcal{I} = 
\bigl\{(f,g)\in (\mathcal{H}^{\circ}_{d})^{\Gamma}\times \R[t,x,y]_{d-1}^{\Gamma}: g \text{ strictly interlaces $f$ with respect to } (1,0,0)\bigl\}
\] 
is an open, full-dimensional subset of the affine subspace 
$\{(f,g): {\rm coeff}(f,t^d)=1\}$ in $\R[t,x,y]_d^{\Gamma}\times \R[t,x,y]_{d-1}^{\Gamma}$.  
Moreover, the image of $\mathcal{I}$ under the projection $\pi(f,g)= f$ is all of $ (\mathcal{H}^{\circ}_{d})^{\Gamma}$. 
By Propositions~\ref{prop:smooth}, \ref{prop:transverse}, and \ref{prop:infty},
\[
\mathcal{U} =
\bigl\{(f,g)\in \R[t,x,y]_d^{\Gamma}\times \R[t,x,y]_{d-1}^{\Gamma} : \text{conditions {\rm (i)},{\rm (iii)}, {\rm (iv)} are satisfied}\bigl\}
\]
is open and dense in the Euclidean topology on $\R[t,x,y]_d^{\Gamma}\times \R[t,x,y]_{d-1}^{\Gamma}$. 
Furthermore, we note that $\mathcal{U}$ is invariant under diagonal scaling 
$(f,g)\mapsto (\lambda f, g)$ where $\lambda \in \R^*$. 
An element $(f,g)$ can be rescaled to have ${\rm coeff}(f,t^d)=1$ 
if and only if the coefficient ${\rm coeff}(f,t^d)$ is nonzero, showing that 
$\mathcal{U}$ is also dense in the subspace given by ${\rm coeff}(f,t^d)=1$. 
It follows that $\mathcal{I} \cap \mathcal{U}$ is dense in $\mathcal{I}$. 
Since the projection $\pi(\mathcal{I})$ equals $(\mathcal{H}^{\circ}_{d})^{\Gamma}$, 
this gives that the projection of $\mathcal{I} \cap \mathcal{U}$ is dense in $(\mathcal{H}^{\circ}_{d})^{\Gamma}$. 
Then, by Theorem~\ref{thm:InvHyp}, we see that 
\[
\overline{\pi(\mathcal{I} \cap \mathcal{U})} \ = \ \overline{\pi(\mathcal{I})} \ = \ \overline{(\mathcal{H}^{\circ}_{d})^{\Gamma}} 
 \ = \ \mathcal{H}_d^{\Gamma}.
\]
Therefore every polynomial in $\mathcal{H}_d^{\Gamma}$
belongs to the closure of the set of polynomials $f$ for which there exists $g\in \R[t,x,y]_{d-1}^{\Gamma}$ 
with $(f,g)\in \mathcal{I} \cap \mathcal{U}$. 
\end{proof}

\begin{proof}[Proof of Theorem~\ref{thm:d=qn}]
	         Let $f \in \HH_d^{\Gamma}$. By Theorem~\ref{thm: d=qn limit}, $f$ is the limit of some sequence $(f_{\varepsilon})_{\varepsilon}$ in $(\HH^\circ)_d^{\Gamma}$ satisfying (A1)--(A4). By Theorem~\ref{thm: d=qn transverse}, 
	         for each $\varepsilon$, there exists some matrix $A_\varepsilon$ in $\CWS_{\F}(n,d)$ such that 
	         $f_{\varepsilon} = F_{A_\varepsilon}$, where $\F = \C$ for $\Gamma = C_n$ and $\F = \R$ for $\Gamma = D_{2n}$. Now $f_{\varepsilon}(t,-1,0)$ and $f_{\varepsilon}(t,0,-i)$ are the characteristic polynomials of $\Re(A_\varepsilon) = (A_\varepsilon+A_\varepsilon^{*})/2$ and $\Im(A_\varepsilon) = (A_\varepsilon-A_\varepsilon^{*})/2i$. These must converge to the roots of $f(t,-1,0)$ or $f(t,0,-i)$ respectively. Therefore, the eigenvalues of $\Re(A_\varepsilon)$ and $\Im(A_\varepsilon)$ are bounded, which bounds the sequences $(\Re(A_\varepsilon))_\varepsilon$ and $(\Im(A_\varepsilon))_\varepsilon$. Then
         \[
          (\Re(A_\varepsilon))_\varepsilon + i(\Im(A_\varepsilon))_\varepsilon= (\Re(A_\varepsilon) + i\Im(A_\varepsilon))_\varepsilon= (A_\varepsilon)_\varepsilon
          \] which is also bounded. Passing to a convergent subsequence gives that $\lim \limits_{\varepsilon \to 0}(A_\varepsilon)_\varepsilon = A$ and 
          \begin{align*}
          f&=
          \det \left(
          \lim \limits_{\varepsilon \to 0}
          \left( tI_d + \frac{x+iy}{2} A_{\varepsilon}^* + \frac{x-iy}{2}A_{\varepsilon} \right) \right)
         = \det \left(tI_d + \frac{x+iy}{2}A^* + \frac{x-iy}{2}A \right) = F_A.
          \end{align*} 
\end{proof}


\section{Results on the Classical and $k$-Higher Rank Numerical Range}
\label{sec: num range}

The authors of \cite{Chien2013, Chien2013B, Tsai} were particularly interested in the relationship between the numerical range and the curve dual to its boundary generating curve. Using this relationship and Theorem~\ref{thm:d=qn}, we characterize matrices whose numerical range is invariant under rotation. 

In this section, we describe the interaction between invariance of the numerical range, its boundary generating curve, and the dual variety. We also discuss applications to a generalization of the numerical range. 
In the special case $d=n$, these results appear in \cite{LP}.

	Invariance of $F_A$ under rotation implies the invariance of $\W(A)$ under multiplication by $\omega$, as discussed in 
	Proposition~\ref{prop:CWS=>invariant}. 
	However, the converse does not hold. That is, there are examples for which $\W(A)$ is invariant under
	multiplication by $\omega$, but $F_A$ is not invariant under the action of $C_n$. See Example~\ref{ex: fB not invariant}.
	As discussed below, the invariance of $\W(A)$ only implies that the invariance of some factor of $F_A$, namely 
	the product of irreducible factors whose dual varieties contribute to the boundary of $\W(A)$. 
The invariance of the boundary of $\W(A)$ still gives us information about the dual curve $\V_\C(F_A)$.

\begin{figure}[h]
\centering
	\includegraphics[scale=.4]{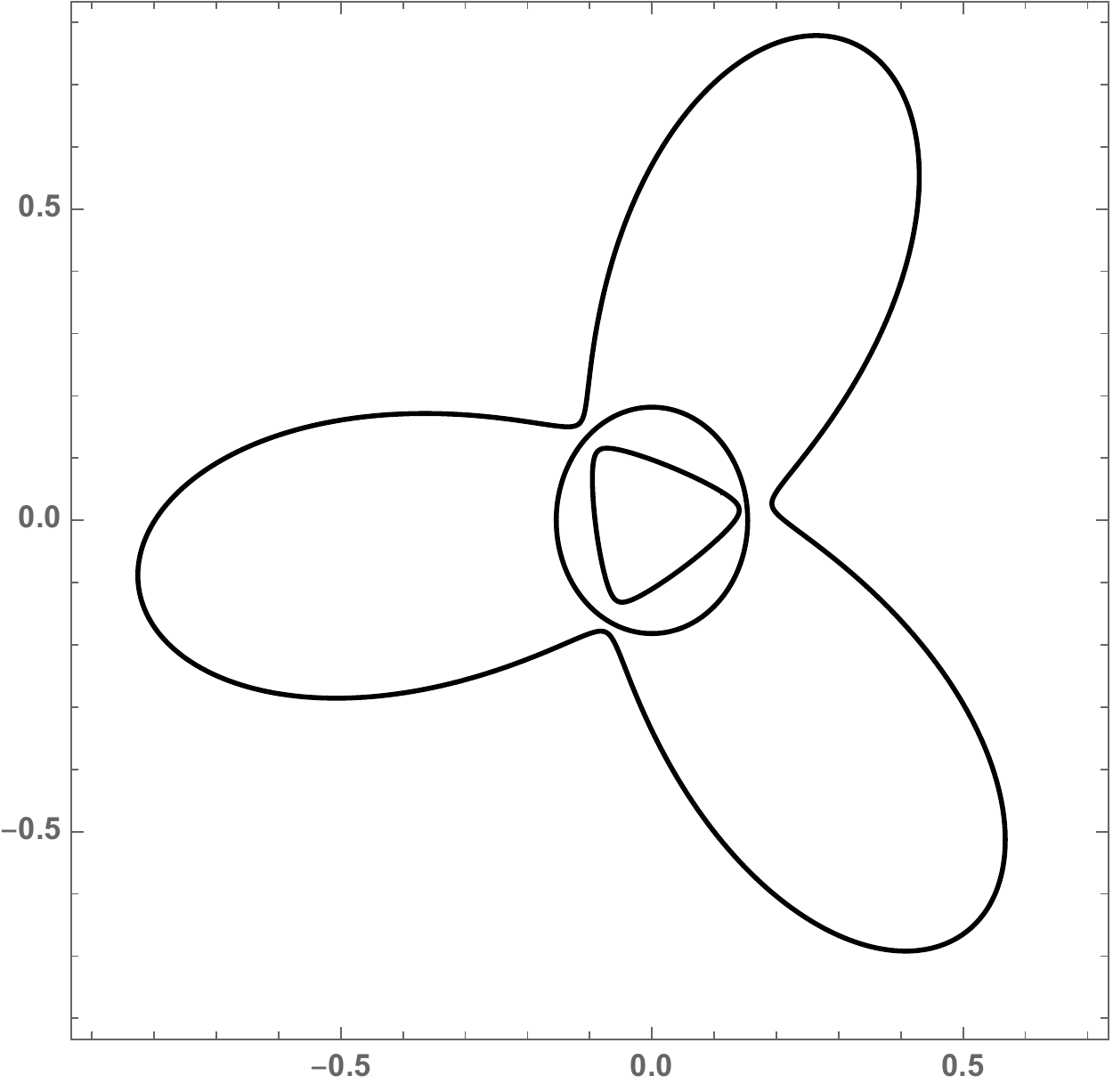}\hspace{5mm}
	\includegraphics[scale=.4]{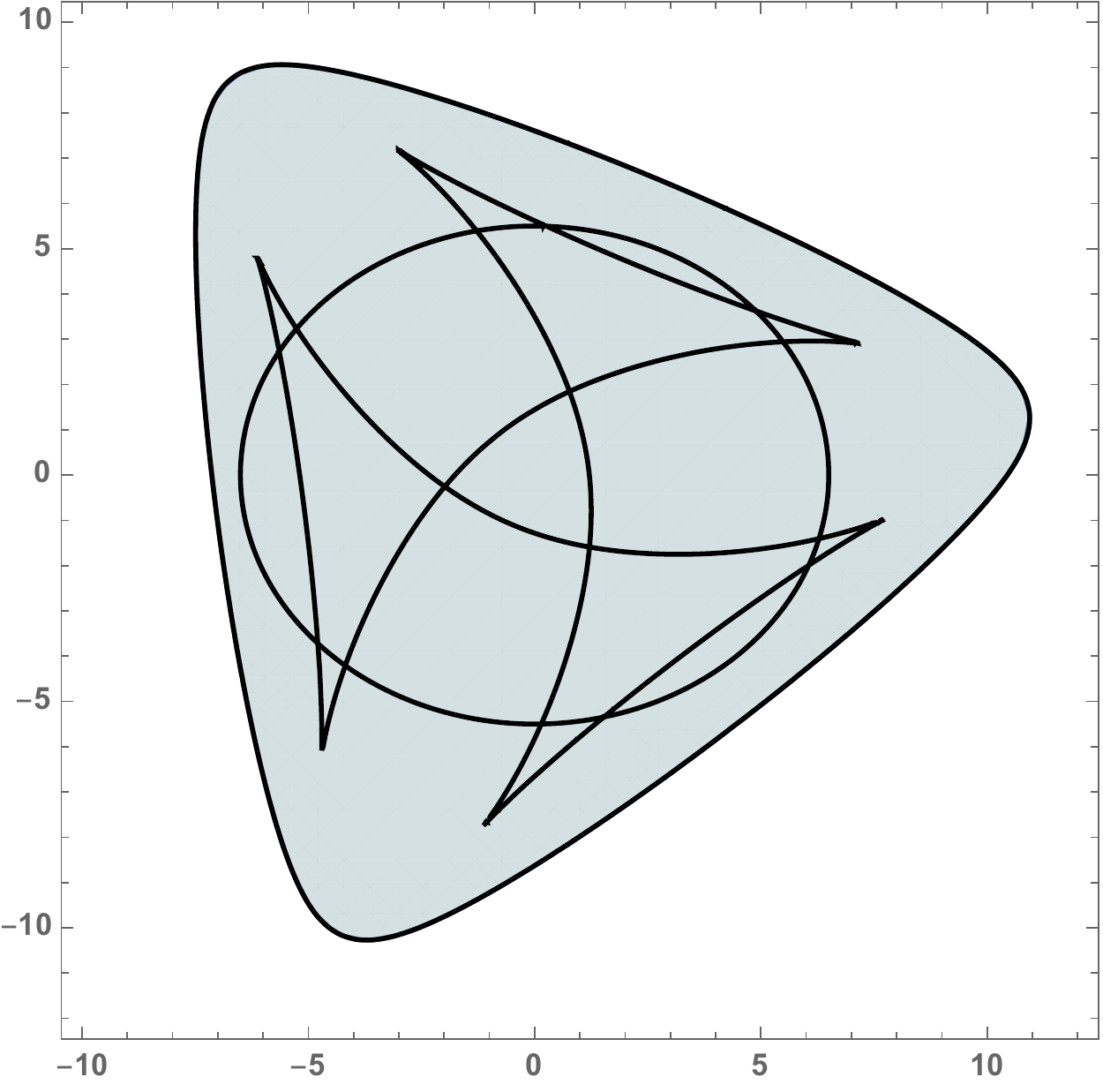}
	\caption{The hypersurface $\V_\R(F_B)$ in the plane $t=1$ for $B \in \C^{6 \times 6}$ from Example~\ref{ex: fB not invariant} (right) and $\W(B)$ (left). Although the plane curve and its dual are not invariant under rotation, the numerical range $\W(B)$ is.}
	\label{fig: fB not invariant}
\end{figure}

       \begin{thm} \label{thm: num range}
Let $B \in \C^{d \times d}$. If  $\mathcal{W}(B)$ is invariant under multiplication by 
$\omega = e^{\frac{2\pi i}{n}}$, then there exists 
$A \in \CWS_{\C}(n, n \lceil d/n \rceil)$ such that $\mathcal{W}(B) = \mathcal{W}(A)$.
If in addition, $\mathcal{W}(B)$ is invariant under conjugation, then $A$ can be taken to have real entries (i.e. $A\in \CWS_{\R}(n, n \lceil d/n \rceil)$.
\end{thm}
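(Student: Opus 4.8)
The plan is to produce, from $B$, an invariant hyperbolic plane curve whose Kippenhahn numerical range equals $\W(B)$ and whose degree is exactly $m:=n\lceil d/n\rceil$, and then to invoke Theorem~\ref{thm:d=qn}. The case $n=1$ is trivial, since $\CWS_{\C}(1,d)=\C^{d\times d}$ and $m=d$, so assume $n\ge 2$. Then for any $z\in\W(B)$ the origin $\tfrac1n\sum_{k=0}^{n-1}\omega^k z=0$ is a convex combination of points of $\W(B)$, so $0\in\W(B)$; this is used at the padding step.

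First I would isolate the essential part of the boundary generating polynomial $F_B$. Normalize the distinct irreducible factors of $F_B$ to take value $1$ at $(1,0,0)$. Since $\W(B)$ is a compact convex semialgebraic set, $\partial\W(B)$ is a finite union of line segments and curved analytic arcs, and each point of such a curved arc is an extreme point of $\W(B)$, hence lies on the dual curve $\V_{\C}(F_B)^{*}$ by Kippenhahn's theorem \cite{Kippenhahn}. A curved arc is therefore Zariski dense in a single irreducible component of $\V_{\C}(F_B)^{*}$, so the union $Y$ of the components of $\V_{\C}(F_B)^{*}$ that contain a curved arc of $\partial\W(B)$ is well defined; let $g_1$ be the product of the corresponding irreducible factors of $F_B$. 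In addition, any extreme point of $\W(B)$ lying on no curved arc is necessarily a corner of $\W(B)$ (it is a point where two boundary segments meet), hence a reducing eigenvalue of $B$, and so contributes a linear factor of $F_B$ whose dual point is that corner; let $g_2$ be the product of these linear forms, and put $g=g_1 g_2$. Then $g$ is a product of distinct irreducible factors of $F_B$, so $g\mid F_B$ and $\deg g\le d$, and (each irreducible factor of the hyperbolic polynomial $F_B$ being, after the above normalization, hyperbolic with respect to $(1,0,0)$) the product $g$ is hyperbolic with respect to $(1,0,0)$ with $g(1,0,0)=1$. Moreover the real affine part of $\V_{\C}(g)^{*}$ is the union of the real affine points of $Y$ and of those corners, so it contains every extreme point of $\W(B)$ while being contained in $\W(B)$; hence its convex hull equals $\W(B)$.

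Next I would verify invariance and pad the degree. Since $\rot$ and $\refl$ are orthogonal and fix the first coordinate, they act on the dual plane by the same linear maps; on the affine dual plane, identified with $\C$ via $[1:b:c]\mapsto b+ic$, these actions are multiplication by $\omega^{\pm1}$ and complex conjugation respectively, under both of which $\W(B)$ is invariant by hypothesis. Hence $\rot$ preserves $\partial\W(B)$, so it permutes the curved arcs and permutes the corners lying on no curved arc; because each curved arc is Zariski dense in the component of $\V_{\C}(F_B)^{*}$ that carries it, the collection of components forming $Y$ is $\rot$-invariant, and likewise the linear forms making up $g_2$ are permuted (up to scalars) by $\rot$, since the dual point of such a form is sent to the dual point of another, again a reducing eigenvalue of $B$. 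Therefore $\rot\cdot g=\lambda g$ for a scalar $\lambda$, and evaluating at $(1,0,0)$ forces $\lambda=1$, so $g\in\R[t,x,y]^{C_n}$; when $\W(B)$ is additionally invariant under conjugation, the same argument applied to $\refl$ gives $g\in\R[t,x,y]^{D_{2n}}$. Finally set $\tilde g=g\cdot t^{\,m-\deg g}$, a polynomial of degree exactly $m=n\lceil d/n\rceil\in n\Z_{+}$ with $\tilde g(1,0,0)=1$ that is hyperbolic with respect to $(1,0,0)$ and $\Gamma$-invariant; since the real affine part of $\V_{\C}(t)^{*}$ is just the origin, which lies in $\W(B)$, the convex hull of the real affine part of $\V_{\C}(\tilde g)^{*}$ is still $\W(B)$. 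Thus $\tilde g\in\HH_{m}^{C_n}$ (resp.\ $\HH_{m}^{D_{2n}}$), and by Theorem~\ref{thm:d=qn} there is $A\in\CWS_{\C}(n,m)$ (resp.\ $A\in\CWS_{\R}(n,m)$) with $F_A=\tilde g$; Kippenhahn's theorem then yields $\W(A)=\W(B)$, which is the assertion with $m=n\lceil d/n\rceil$.

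The main obstacle is the argument in the last two paragraphs that the essential Kippenhahn data of $\W(B)$ --- the components of $\V_{\C}(F_B)^{*}$ carrying curved boundary arcs, together with the linear forms dual to the corners --- is intrinsic to $\W(B)$, hence preserved by its symmetries. This rests on three standard but slightly delicate facts about numerical ranges (Kippenhahn's theorem; the description of $\partial\W(B)$ as finitely many analytic arcs and line segments; and the fact that a corner of $\W(B)$ is a reducing eigenvalue of $B$), together with some care in taking Zariski closures of real-analytic arcs. The remaining ingredients --- the bound $\deg g\le d$ and the degree padding by a power of $t$ --- are routine.
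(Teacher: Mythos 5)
Your proposal is correct, and its overall architecture is identical to the paper's: isolate the factor of $F_B$ responsible for the extreme points of $\W(B)$, show this factor is $C_n$- (resp.\ $D_{2n}$-) invariant and hyperbolic, pad its degree up to $n\lceil d/n\rceil$ with a power of $t$, apply Theorem~\ref{thm:d=qn}, and conclude with Kippenhahn's theorem. The one step where you genuinely diverge is the extraction of that factor. You decompose $\partial\W(B)$ into curved analytic arcs and line segments, assign the arcs to irreducible components of $\V_{\C}(F_B)^*$, and handle the leftover extreme points via the fact that corners of the numerical range are reducing eigenvalues (Donoghue's theorem), which yields the linear factors $g_2$. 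The paper instead takes $Y$ to be the Zariski closure of the \emph{entire} set of extreme points, observes that $Y$ contains no lines (a line meets the extreme-point set in at most two points), so all components of $Y^*$ are curves inside $\V_{\C}(F_B)$, and takes $f$ to be the minimal polynomial vanishing on $Y^*$. The paper's version buys you a shorter argument that needs no structure theorem for $\partial\W(B)$ and no appeal to reducing eigenvalues: corners are absorbed automatically because a point of $Y$ dualizes to a line, i.e.\ a linear component of $Y^*$, hence a linear factor of $F_B$. Your version buys a more explicit description of which factors of $F_B$ survive, at the cost of the three ``delicate facts'' you yourself flag. One place where you are actually more careful than the paper: you verify $0\in\W(B)$ for $n\ge 2$ before padding by $t^{\,m-\deg g}$, which is needed because the dual of the line $t=0$ contributes the origin to $\V_{\R}(F_A)^*$; the paper's assertion $\V_{\R}(F_A)^*=\V_{\R}(f)^*$ elides this point.
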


\begin{proof}
Kippenhahn's Theorem \cite[Theorem~10]{Kippenhahn} states that 
$\mathcal{W}(B)$  equals the convex hull of 
$\{x+iy : [1:x:y]\in \V_{\R}(F_B)^*\}$.  See also \cite{Chein2010, PSW19}. 
Recall that every compact, convex set is the convex hull of its extreme points. 
Let $E$ denote the extreme points of $\mathcal{W}(B)$ and let  
$Y$ denote the Zarski-closure of  $\{[1:a:b] : a+ib\in E\}$ in $\P^{2}(\R)$. 
By extremality of $E$, we see that $\{[1:a:b] : a+ib\in E\}$ is contained in $\V_{\R}(F_B)^*$ 
and so $Y\subseteq \V_{\R}(F_B)^*$. 
In particular, $Y$ is an algebraic variety of dimension $\leq 1$ and so is a union of points and irreducible curves. 
Moreover, $Y$ contains no lines, since the intersection of $E$ with any line consists of at most two points. 
Therefore all irreducible components of the dual variety $Y^*$ have dimension one. 
Since  $Y\subseteq \V_{\R}(F_B)^*$, we see that  $Y^*\subseteq \V_{\R}(F_B)$. 

Let $f$ denote the minimal polynomial in $\R[t,x,y]_e$ vanishing on $Y^*$. 
Note that since $f$ must be a factor of $F_B$, 
 $e\leq d$, $f$ is hyperbolic with respect to $(1,0,0)$, 
  and $f(1,0,0)\neq 0$, so 
we can take ${\rm coeff}(f,t^e) = 1$. 

Since $\mathcal{W}(B)$ is invariant under multiplication by $\omega$, so is $E$. 
It follows that $Y$ and hence $Y^*$ are invariant under the action of $C_n$. 
Therefore $f\in \R[t,x,y]^{C_n}_e$.  Similarly, if in addition 
$\mathcal{W}(B)$ is invariant under conjugation, then so is $E$. 
The curves $Y$ and $Y^*$ are then invariant under $D_{2n}$ and so $f\in \R[t,x,y]_e^{D_{2n}}$. 

Let $\delta = n\lceil d/n \rceil - e\geq 0$ and consider 
$t^{\delta}f\in \R[t,x,y]_{n\lceil d/n \rceil}^{C_n}$.  
By Theorem~\ref{thm:d=qn}, there exists $A\in \CWS_{\C}(n, n\lceil d/n \rceil)$ such that $F_A=f$.
Then $ \V_{\R}(F_A)^*=V_{\R}(f)^* $ and so $\mathcal{W}(A) = \mathcal{W}(B)$.
Moreover, if $f$ is invariant under $D_{2n}$, then we can take $A$ to be real. 
\end{proof}

\begin{example}\label{ex: fB not invariant}
	Take $B =
{\footnotesize \begin{pmatrix}
 0 \!\! & \!\! 0 \!\! & \!\! 0 \!\! & \!\! 1 \!\! & \!\! 0 \!\! & \!\! 0 \\
 0 \!\! & \!\! 0 \!\! & \!\!\!\! -7-6 i \!\! & \!\! 0 \!\! & \!\! -8-4 i \!\!\!\! & \!\! 0 \\
 0 \!\! & \!\! 0 \!\! & \!\! 0 \!\! & \!\! 0 \!\! & \!\! 0 \!\! & \!\!\!\! -1-2 i \\
 -12 \!\! & \!\! 0 \!\! & \!\! 0 \!\! & \!\! 0 \!\! & \!\! 0 \!\! & \!\! 0 \\
 0 \!\! & \!\! 0 \!\! & \!\! 0 \!\! & \!\! 0 \!\! & \!\! 0 \!\! & \!\!\!\! -10-2 i \\
 0 \!\! & \!\!\!\! -5-10 i \!\!\!\! & \!\! 0 \!\! & \!\! 0 \!\! & \!\! 0 \!\! & \!\! 0 \\
\end{pmatrix}}$. 
Even though $B\not\in \CWS_{\C}(3,6)$, its numerical range 
 $\W(B)$ is invariant under rotation by the angle $2\pi/3$.
 Both $\V_{\R}(F_B)$ and  $\W(B)$ are shown in Figure~\ref{fig: fB not invariant}. 
  For brevity we use $u$, $v$ to denote the linear forms $u = x+iy$ and $v = x-iy$. 
Then $F_B$ factors as $f_1f_2$ where
\begin{align*}
f_1 &= (1/8)\left(8 t^4-798 t^2 u v+1050t (u^3+v^3)+425 i t (u^3-v^3)+3860 (uv)^2\right),\\
f_2 &= (1/4)\left(4 t^2+12 u^2-145 u v+12 v^2\right),
\end{align*}
and $\V_{\C}(f_1)^*$ contains the boundary of $\W(B)$. Notice that $F_B$ is not invariant under rotation by $2\pi/3$, but the quartic factor $f_1$ is. By Theorem~\ref{thm:d=qn}, we can find a matrix $A \in \CWS_{\C}(3,6)$ such that $t^2f_1 = F_A$ and $\W(A) = \W(B)$. One such matrix is given by 
\begin{equation}\label{eq:AwithZeros}
A = {\small \begin{pmatrix}
 0 & -2+i & 0 & 0 & 0 & 0 \\
 0 & 0 & -10+5 i & 0 & 0 & 0 \\
 -6+7 i & 0 & 0 & -4+8 i & 0 & 0 \\
 0 & -2+10 i & 0 & 0 & 0 & 0 \\
 0 & 0 & 0 & 0 & 0 & 0 \\
 0 & 0 & 0 & 0 & 0 & 0 \\
\end{pmatrix}}.
\end{equation}
\end{example}

Theorem~\ref{thm: num range} shows that any invariant numerical range is the numerical range of a block cyclic weighted shift matrix, 
of possibly larger size. One possible strengthening of this is to restrict the the size of this structured matrix. 

\begin{conjecture}
	If $B \in \C^{d \times d}$ and $\W(B)$ is invariant multiplication by 
	$e^{2\pi i /n}$, then there exists $A\in \CWS_{\C}(n,d)$ with $\W(A) = \W(B)$. Moreover, if  $\W(B)$ is also invariant under conjugation, the entries of $A$ can be taken to be real. 
\end{conjecture}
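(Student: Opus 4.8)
The plan is to push the proof of Theorem~\ref{thm: num range} as far as it will go and then isolate the one genuinely new ingredient. As there, let $f\in\R[t,x,y]_e$ be the minimal polynomial vanishing on $Y^{*}$, where $Y$ is the Zariski closure of $\{[1:a:b]:a+ib\in E\}$ and $E$ is the set of extreme points of $\W(B)$. Then $f$ divides $F_B$, so $e\le d$, $f$ is hyperbolic with respect to $(1,0,0)$ with $f(1,0,0)\neq 0$, and $f\in\R[t,x,y]^{C_n}_e$ (and in $\R[t,x,y]^{D_{2n}}_e$ if $\W(B)$ is moreover conjugation-invariant), while $\W(B)$ is the convex hull of the real affine part of $\V_\R(f)^{*}$. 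Exactly as in the proof of Theorem~\ref{thm: num range}, replacing $f$ by $t^{\delta}f$ does not change $\W$, so it suffices to realize $t^{\delta}f$ as $F_A$ with $A$ a block cyclic weighted shift matrix of order $n$ and size at most $d$; such an $A$ may then be padded with zero rows and columns to have size exactly $d$ without changing its numerical range, since $0\in\W(B)$ whenever $\W(B)$ is invariant under a nontrivial rotation. Consequently, whenever the interval $[e,d]$ contains a multiple $d'$ of $n$, taking $\delta=d'-e$ and invoking Theorem~\ref{thm:d=qn} finishes the proof. The only cases left are those in which no multiple of $n$ lies in $[e,d]$; in particular $e$ is not a multiple of $n$ and the next multiple of $n$ exceeds $d$.

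In those cases the conjecture reduces to a size-$d$ version of Theorem~\ref{thm:d=qn} with $n\nmid d$: that a $C_n$-invariant (resp.\ $D_{2n}$-invariant) hyperbolic form of degree $d$ with coefficient of $t^d$ equal to $1$ is $F_A$ for some $A\in\CWS_{\C}(n,d)$ (resp.\ $\CWS_{\R}(n,d)$), at least for the polynomial $t^{d-e}f$ at hand. The natural attack is to rerun Construction~\ref{con: d=qn transverse}: pick an interlacer $g\in\HH_{d-1}^{\Gamma}$, set $g_{11}=g$, split $\V_\C(f,g)$ into conjugate unions $S\cup\overline S$ of $C_n$-orbits, build the adjugate matrix $G$ whose first row $g_{1j}$ lies in the eigenspace $\es{1-j}_{d-1}$ and vanishes on $S$, and then normalize by block-diagonalizing the coefficient of $t$ and applying a block Cholesky (and, in the dihedral case, work inside $\R[t,x+iy,x-iy]$). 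The block cyclic weighted shift that results now has $r$ blocks of size $\lceil d/n\rceil$ and $n-r$ of size $\lfloor d/n\rfloor$ with $d\equiv r\bmod n$, so what is required is, for each residue $\ell$, at least $\lceil d/n\rceil$ or $\lfloor d/n\rfloor$ (as appropriate) linearly independent forms of $\es{\ell}_{d-1}$ vanishing on $S$ --- the $n\nmid d$ analogues of Lemmas~\ref{lem: dim eigenspaces d=qn}, \ref{prop: tildeS}, and \ref{lem: d=qn cyclic basis}. These become honest lattice-point identities whose two sides are now both $\ell$-dependent, and are complicated further by the fact that, for $n\nmid d$, $C_n$-orbits in $\V_\C(f,g)$ need not all have size $n$: points forced onto $t=0$, or onto other $\rot^{\ell}$-fixed loci, give shorter orbits, so the count of $|\tilde S|$ and the determination of which eigenspaces automatically vanish at infinity must be redone case by case.

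The real obstacle is the degeneration argument of Section~\ref{sec: topology}. For $n\mid d$ it reduces an arbitrary $f\in\HH_d^{\Gamma}$ to the transverse smooth case by approximating $f$ with members of $(\HH^{\circ})_d^{\Gamma}$ satisfying (A1)--(A4); this hinges on $(\HH^{\circ})_d^{\Gamma}$ being nonempty with smooth members, which is \emph{false} for the degrees $d$ flagged in Section~\ref{sec:questions}, where every invariant hyperbolic form of degree $d$ defines a singular curve --- and, in addition, $t^{d-e}f$ is itself already non-reduced once $d-e\ge 2$. So in precisely the degrees where the conjecture is not yet known one cannot degenerate from a smooth invariant curve at all, and one must instead either (i) build the block cyclic weighted shift determinantal representation directly on a necessarily singular invariant curve, allowing the adjugate to drop rank in a controlled way along the singular locus in the spirit of Dixon's construction, while preserving the cyclic block structure under the $\varphi$-action of \eqref{eq: phimap}; or (ii) bypass a representation of $t^{d-e}f$ entirely and instead compress the size-$n\lceil d/n\rceil$ matrix produced by Theorem~\ref{thm: num range} down to size $d$ in a way that preserves both its numerical range and its membership in $\CWS_{\C}(n,\cdot)$. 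I expect this singular-curve/compression issue, rather than the lattice-point bookkeeping of the previous paragraph, to be where the difficulty genuinely sits --- which is presumably why the statement is stated only as a conjecture.
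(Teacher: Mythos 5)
This statement is stated in the paper only as a conjecture; the authors give no proof of it, and your proposal rightly does not claim to supply one. What you have written is a correct partial result plus an accurate diagnosis of the obstruction. The reduction in your first paragraph is sound: the minimal form $f$ of degree $e\le d$ cutting out $Y^*$ is invariant and hyperbolic exactly as in the proof of Theorem~\ref{thm: num range}, and whenever $[e,d]$ contains a multiple $d'$ of $n$ you may apply Theorem~\ref{thm:d=qn} to $t^{d'-e}f$ and pad the resulting matrix in $\CWS_{\C}(n,d')$ with zero rows and columns to land in $\CWS_{\C}(n,d)$; the padding preserves membership in $\CWS_{\C}(n,\cdot)$ (the extra entries are zero, so the congruence condition is vacuous there) and preserves the numerical range because a convex set invariant under a nontrivial rotation contains $0$ (average an orbit). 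This already sharpens the size bound $n\lceil d/n\rceil$ of Theorem~\ref{thm: num range} to $d$ in many cases, which is more than the paper records.

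Your location of the genuine difficulty also agrees with the authors' own discussion. The residual cases force a version of Theorem~\ref{thm:d=qn} in degree not divisible by $n$, and Section~\ref{sec:questions} explains why the machinery of Sections~\ref{sec: d=qn smooth}--\ref{sec: topology} breaks there: for $d \bmod n\ge 3$ every form in $\C[t,x,y]_d^{C_n}$ is singular at $[0:1:\pm i]$, so assumption (A1) can never be met and the degeneration argument of Theorem~\ref{thm: d=qn limit} has nothing smooth to degenerate from; moreover $t^{d-e}f$ is non-reduced once $d-e\ge2$. Your two suggested escape routes (a Dixon-style construction on a controlled singular invariant curve preserving the $\varphi$-eigenspace structure, or a range-preserving compression of the size-$n\lceil d/n\rceil$ matrix) are reasonable directions, essentially the content of Question~\ref{q: d=qn+m det rep degeneration}, but neither is carried out here, so the conjecture remains open. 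In short: no error, a correct partial result, and an honest identification of exactly the gap the authors themselves leave open.
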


Theorem~\ref{thm:d=qn} also has implications for the following generalization of the numerical range. 
\begin{defn} 
For $k \in [d]$ the \hi{$k$-higher rank numerical range} of $A \in \C^{d \times d}$ is
\begin{equation*}
	\W_k(A):= \left\{\lambda\in \C \text{ such that } PAP = \lambda P \text{ for some rank-$k$ projection }P \right\}.
\end{equation*} 
\end{defn}

This set is compact and invariant under unitary transformation. Building off of the work of Choi et.~al.~\cite{choi}, Woerdeman \cite{woerdeman} showed that $\W_k(A)$ is convex. 
The classical numerical range is defined by $k=1$. Like before, there is a relationship between the geometry of $\W_k(A)$ and the hyperbolic plane curve $F_A$. Chien and Nakazato \cite{Chien2011} describe how to compute $\W_k(A)$ using $F_A$ and the boundary generating curve. They also give conditions for which the $k$-higher rank numerical range is not uniquely determined by the numerical range when $k > 1$.
\begin{remark}
If $A$ is a complex matrix for which $F_A$ is irreducible in $\C[t,x,y]$, then $F_A$ is uniquely 
determined by its numerical range. That is, if $A$ and $B$ are complex matrices for which 
the polynomials $F_A$ and $F_B$ are irreducible, then $\W(A) = \W(B)$ if and only if $F_A = F_B$.  By the results of Gau and Wu \cite{gau2013}, it follows that $\W(A) = \W(B)$ if and only if $\W_k(A) = \W_k(B)$ for all $1 \leq k \leq \floor{d/2}+1$. 
To see this, note that the $\W(A)$ is uniquely determined by its extreme points $E$. 
As in the proof of Theorem~\ref{thm: num range}, if $Y$ is the Zariski closure of 
points $\{[1:a:b]: a+ib\in E\}$, then the minimal polynomial $f$ vanishing on the dual 
variety $Y^*$ is a factor of $F_A$.  If $F_A$ is irreducible, then this gives $f = F_A$, 
which is uniquely determined by $E$ and thus $\W(A)$.
 \end{remark}
Examples from \cite{Chien2015, patton2013, patton2011} show there exist matrices $A$ for which $F_A \in \C[t,x,y]^{C_n}_d$, but $A$ is not unitarily equivalent to any cyclic weighted shift matrix (with positive weights). Theorem~\ref{thm:d=qn} proves there must exist some matrix 
in $\CWS_{\C}(n,n\lceil d/n \rceil)$ with the same $k$-higher rank numerical range of $A \in \C^{d \times d}$, even if the two matrices are not unitarily equivalent.

\begin{cor} \label{cor: cws num range}
    If $ F_B \in \C[t,x,y]^{C_n}_d$ for some $B \in \C^{d \times d}$, then there exists $A \in \CWS_{\C}(n, n \lceil d/n \rceil)$ 
    with $\mathcal{W}_k(A) = \mathcal{W}_k(B)$ for all $1 \leq k \leq \floor{d/2}+1$.
\end{cor}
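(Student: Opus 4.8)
The plan is to combine Theorem~\ref{thm: num range} with the theory of higher rank numerical ranges and the fact that, for irreducible $F_A$, the numerical range determines $F_A$ and hence all $\W_k$. Concretely, suppose $F_B\in \C[t,x,y]^{C_n}_d$. First I would apply Proposition~\ref{prop:CWS=>invariant} in reverse: the invariance of $F_B$ under $C_n$ implies that the dual variety $\V_{\C}(F_B)^*$ is $C_n$-invariant, hence so is $\W(B)={\rm conv}(\{x+iy:[1:x:y]\in \V_{\R}(F_B)^*\})$ by Kippenhahn's theorem. So $\W(B)$ is invariant under multiplication by $\omega = e^{2\pi i/n}$, and Theorem~\ref{thm: num range} produces $A\in \CWS_{\C}(n,n\lceil d/n\rceil)$ with $\W(A)=\W(B)$.

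The remaining work is to upgrade the equality $\W(A)=\W(B)$ of classical numerical ranges to equality $\W_k(A)=\W_k(B)$ for all $k$. Here I would invoke the result of Gau and Wu \cite{gau2013} (already cited in the preceding Remark) which says that $\W(A)$ and $\W(B)$ together with the values $\W_k$ for $1\le k\le \lfloor d/2\rfloor+1$ are mutually determined by the boundary generating curve data; more precisely, the Remark preceding the corollary records that when $F_A$ is irreducible, $\W(A)=\W(B)$ forces $F_A=F_B$ and hence $\W_k(A)=\W_k(B)$ for all $k$. The clean path is therefore to reduce to the situation governed by that Remark: let $f$ be the minimal polynomial vanishing on $Y^*$, where $Y$ is the Zariski closure of $\{[1:a:b]:a+ib\in E\}$ and $E$ is the extreme point set of $\W(B)$, exactly as in the proof of Theorem~\ref{thm: num range}. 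This $f$ is a factor of $F_B$, is $C_n$-invariant, and the constructed $A$ satisfies $F_A=t^\delta f$. One then checks $\W_k(A)=\W_k(B)$ using the Chien--Nakazato description \cite{Chien2011} of $\W_k$ in terms of $F_A$ and its boundary generating curve: both $\W_k(A)$ and $\W_k(B)$ are computed from the same curve $\V_{\C}(f)$ and the multiplicities with which its dual components appear, and the padding factor $t^\delta$ (whose dual is a single point at infinity, contributing nothing to any $\W_k$) does not change these.

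In more detail, I would argue as follows. By \cite{Chien2011}, $\W_k(A)$ is obtained from $F_A$ by intersecting, over all directions, the half-planes supported at the $k$-th largest eigenvalue; equivalently it depends only on the real boundary generating curve $\V_{\R}(F_A)$ counted with the multiplicity of each irreducible factor. The factor $t^\delta$ of $F_A$ contributes only the line at infinity to $\V_{\C}(F_A)$, whose dual is the point $[1:0:0]$ — this affects none of the supporting half-planes in any affine direction, so $\W_k(t^\delta f) = \W_k(f)$ for every $k$ where the left side makes sense (with the convention matching sizes via zero padding, which does not alter the $\lambda$ with $PAP=\lambda P$). It remains to see $\W_k$ of the size-$d$ matrix $B$ with $F_B$ equal to $f$ times possibly other factors agrees. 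This is where I expect the main obstacle: controlling the \emph{other} irreducible factors of $F_B$ that do not contribute to the boundary of $\W(B)$. For $\W_1=\W$ these interior factors are irrelevant, but a priori they could affect $\W_k$ for $k>1$. The resolution is precisely the cited Gau--Wu theorem together with the structure recorded in the Remark: the higher rank numerical ranges for $1\le k\le\lfloor d/2\rfloor+1$ are determined by the classical numerical range once one tracks the boundary generating curve, so that $\W(A)=\W(B)$ combined with the agreement of the relevant curve data yields $\W_k(A)=\W_k(B)$ for all such $k$. I would therefore organize the proof to (i) reduce to the boundary-generating factor $f$ via Theorem~\ref{thm: num range}, (ii) note $F_A=t^\delta f$ and that $t^\delta$ is inert for every $\W_k$, and (iii) cite \cite{gau2013, Chien2011} for the passage from $\W$ to all $\W_k$, exactly as foreshadowed in the preceding Remark. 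The genuinely delicate point, and the one I would write out carefully, is step (iii): ensuring that equality of classical numerical ranges, for matrices whose boundary generating curves share the same extreme-point-carrying component, forces equality of all higher rank numerical ranges in the stated range of $k$.
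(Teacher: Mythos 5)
There is a genuine gap, and you have in fact identified its location yourself without resolving it. Your plan routes through Theorem~\ref{thm: num range}, which produces a matrix $A$ with $F_A = t^\delta f$, where $f$ is only the \emph{boundary-generating} factor of $F_B$ --- the minimal polynomial vanishing on the dual of the extreme points of $\W(B)$. That construction deliberately discards every other irreducible factor of $F_B$. For $k=1$ this is harmless, but for $k>1$ the discarded factors of $F_B$ \emph{can} change $\W_k(B)$, and the result of Gau and Wu does not say that $\W(A)=\W(B)$ forces $\W_k(A)=\W_k(B)$; that implication holds in the Remark preceding the corollary only because $F_A$ is assumed irreducible there, so that $\W(A)=\W(B)$ first forces $F_A=F_B$. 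What \cite[Theorem~1]{gau2013} actually needs as input is equality of the Kippenhahn polynomials up to a $t$-power padding, and your step (iii) provides no mechanism to recover the missing factors of $F_B$ from $\W(B)$ alone.

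The fix is to bypass Theorem~\ref{thm: num range} entirely. Since $F_B$ is $C_n$-invariant, hyperbolic with respect to $(1,0,0)$, and normalized so $F_B(1,0,0)=1$, the polynomial $t^m F_B$ with $m = n\lceil d/n\rceil - d$ lies in $\HH^{C_n}_{n\lceil d/n\rceil}$, whose degree is a multiple of $n$. Applying Theorem~\ref{thm:d=qn} directly to $t^m F_B$ yields $A\in\CWS_{\C}(n,n\lceil d/n\rceil)$ with $F_A = t^m F_B$, so \emph{all} irreducible factors of $F_B$ are retained. Then $\cite[\text{Theorem}~1]{gau2013}$ applies verbatim to give $\W_k(A)=\W_k(B)$ for $1\le k\le \lfloor d/2\rfloor+1$. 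The lesson is that $\W$ is a lossy summary of $F_B$; to control every $\W_k$ you must preserve the entire Kippenhahn polynomial, which Theorem~\ref{thm:d=qn} (unlike Theorem~\ref{thm: num range}) does.
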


    \begin{proof}
    Let $m = n\lceil d/n \rceil - d$. 
        By Theorem~\ref{thm:d=qn}, there exists cyclic weighted shift matrix $A \in \CWS_{\C}(n, n\lceil d/n \rceil)$ so that $t^{m}F_B=F_A$. 
 Then \cite[Theorem~1]{gau2013} implies that $\mathcal{W}_k(A) = \mathcal{W}_k(B)$ for all $1 \leq k \leq \floor{d/2}+1$. 
    \end{proof}

    An extension of Theorem~\ref{thm:d=qn} to arbitrary $d$ would yield the following. 
 
\begin{conjecture} \label{cor: cws num range}
    If $ F_B \in \C[t,x,y]^{C_n}_d$ for some $B \in \C^{d \times d}$, then there exists $A \in \CWS_{\C}(n, d)$ 
    with $\mathcal{W}_k(A) = \mathcal{W}_k(B)$ for all $1 \leq k \leq \floor{d/2}+1$.
\end{conjecture}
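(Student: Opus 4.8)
To attempt this conjecture I would reduce it, as in the corollary above, to an extension of Theorem~\ref{thm:d=qn} that drops the divisibility hypothesis $d\in n\Z_+$. For any $A\in\C^{d\times d}$ the form $F_A$ is hyperbolic with respect to $(1,0,0)$ with $F_A(1,0,0)=1$ by Proposition~\ref{prop:CWS=>invariant}, so the hypothesis $F_B\in\C[t,x,y]^{C_n}_d$ already gives $F_B\in\HH^{C_n}_d$. If one knew the analogue of Theorem~\ref{thm:d=qn}(a) for arbitrary $d$ — that every $f\in\HH^{C_n}_d$ equals $F_A$ for some $A\in\CWS_{\C}(n,d)$ — then choosing $A$ with $F_A=F_B$ and invoking \cite[Theorem~1]{gau2013} (with no padding) would immediately yield $\mathcal{W}_k(A)=\mathcal{W}_k(B)$ for all $1\le k\le\floor{d/2}+1$. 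Since we need $A$ of size exactly $d$ and $\mathcal{W}_k$ for $k>1$ is sensitive to the multiplicities of the factors of $F_A$, there is no obvious way to get by with less than this full extension; so the task is to run the program of Sections~\ref{sec: d=qn smooth}--\ref{sec: topology} without assuming $n\mid d$.

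For the generic smooth case I would imitate Construction~\ref{con: d=qn transverse}. Writing $d=qn+r$ with $0\le r<n$, a block cyclic weighted shift matrix of order $n$ now has $r$ blocks of size $q+1$ and $n-r$ blocks of size $q$ arranged cyclically (Remark~\ref{rem:permutation}); this fixes how many of the first-row forms $g_{1j}$ of the adjugate must lie in each eigenspace $\es{\ell}_{d-1}$, namely $\lceil d/n\rceil$ for $r$ of the residues and $\lfloor d/n\rfloor$ for the rest. One then has to redo the dimension arithmetic of Lemma~\ref{lem: dim eigenspaces d=qn}, Proposition~\ref{prop: tildeS} and Lemma~\ref{lem: d=qn cyclic basis} in this setting to verify that $\dim_{\C}\big(\es{\ell}_{d-1}\cap\mathcal{I}(S)_{d-1}\big)$ meets the requirement for each $\ell$; I expect this to hold but to be tighter than for $d=qn$, with the ``factor of $t$'' bookkeeping of Lemma~\ref{lem: factor of t} now depending on $d$ and $\ell$ modulo both $2$ and $n$. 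With the first row chosen, the invariant Max Noether lemma (Lemma~\ref{lem: maxnoether}) fills in the remaining entries, and the block-diagonalize-then-Cholesky normalization of the coefficient of $t$ carries over unchanged (with $\R$-entries and $\refl\circ\conj$-invariant forms for $\Gamma=D_{2n}$, as in Construction~\ref{con: d=qn dihedral}). The degenerate cases would then reduce to the smooth case by the limiting argument of Section~\ref{sec: topology}, \emph{provided} the analogues of Propositions~\ref{prop:smooth}, \ref{prop:transverse} and \ref{prop:infty} hold, i.e.\ provided a generic invariant form of degree $d$ defines a smooth complex curve.

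The main obstacle is precisely that last proviso: for many $d$ not divisible by $n$, \emph{every} form in $\C[t,x,y]^{C_n}_d$ defines a singular plane curve — the phenomenon behind Section~\ref{sec:questions} — so Proposition~\ref{prop:smooth} fails and the approximation-by-smooth-curves strategy of Section~\ref{sec: topology} is vacuous for those $d$. A genuinely different construction is then needed for singular $f$. The approach I would pursue is to pass to the normalization $\widetilde{X}\to X=\V_{\C}(f)$: the singular locus of $X$ is a union of $C_n$-orbits, so $\widetilde{X}$ inherits a $C_n$-action, and one can try to produce the adjugate from a $C_n$-equivariant line bundle of the correct degree on $\widetilde{X}$, pushing it forward as in \cite{PV} for singular curves while tracking the $\es{\ell}$-decomposition as in Sections~\ref{sec: d=qn smooth}--\ref{sec: d=qn dihedral}. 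The technical heart, which I expect to be the hardest step, is an equivariant ``$af+bg$'' statement — a version of Lemma~\ref{lem: maxnoether} valid when $\V_{\C}(f)$ carries an invariant configuration of (ordinary) singularities rather than being smooth — together with the real-structure control needed for part~(b). Establishing that would finish the extension of Theorem~\ref{thm:d=qn} to arbitrary $d$, and the conjecture would follow as above.
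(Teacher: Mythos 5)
The statement you are addressing is labeled a \emph{conjecture} in the paper and is not proved there; the authors only remark that it would follow from extending Theorem~\ref{thm:d=qn} to degrees $d$ not divisible by $n$. There is thus no proof in the paper to compare against, but your reading of the landscape agrees with the authors' own: the reduction to an arbitrary-degree Theorem~\ref{thm:d=qn}(a) plus \cite[Theorem~1]{gau2013} is the intended route, and the obstruction you identify is exactly what Section~\ref{subsec:singularities} describes --- for $d\bmod n\geq 3$, every form in $\C[t,x,y]^{C_n}_d$ has singularities at $[0:1:\pm i]$, so Proposition~\ref{prop:smooth} fails and the approximation-by-smooth-curves argument of Section~\ref{sec: topology} is unavailable.

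That said, what you have written is a research program, not a proof, and the gaps are the ones you yourself flag. You do not redo the dimension arithmetic of Lemma~\ref{lem: dim eigenspaces d=qn}, Proposition~\ref{prop: tildeS}, and Lemma~\ref{lem: d=qn cyclic basis} for $d=qn+r$ with $0<r<n$, where the blocks have uneven sizes and the number of required first-row forms in each eigenspace $\es{\ell}_{d-1}$ is no longer uniform in $\ell$. More seriously, the equivariant Max Noether statement your normalization route requires --- an analogue of Lemma~\ref{lem: maxnoether} valid when $\V_{\C}(f)$ carries a $C_n$-invariant configuration of (possibly non-nodal) singularities at infinity, with the extra real-structure control needed for part (b) --- is asserted as the ``technical heart'' but not established, and this is precisely where the paper stops. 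Your proposal is an accurate reconstruction of why the question is open, not a resolution of it.
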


\section{Open Questions and Further Directions}
\label{sec:questions}

\subsection{Generalizing to Any Degree}\label{subsec:singularities}
One could hope to generalize Construction~\ref{con: d=qn transverse} for a hyperbolic plane curve of any degree. The main obstruction here is with assumption (A1), specifically the requirement that $\V_\C(f)$ is smooth. For curves with $d~\mod~n~\geq~3$, it seems there are always multiple singularities at infinity meaning most of these curves do not satisfy (A1). More specifically, there are complex singularities at the points $[t:x:y]=[0:1:\pm i]$. We conjecture they each have multiplicity $\binom{d \mod n}{2}$.

To see this recall that monomials in $t$, $x+iy$ and $x-iy$ form a basis for 
$\C[t,x,y]_d^{C_n}$, namely 
\begin{equation}\label{eq:monInvariant}
\C[t,x,y]_d^{C_n} = {\rm span}_{\C}\left\{t^{d-j-k}(x+iy)^j(x-iy)^k : j \equiv k \mod n, \text{ and } j+k\leq d \right\}.
\end{equation}
The exponent vectors $(j,k)$ are pictured in Figure~\ref{fig: supports}.

\begin{figure}[h]
\centering
\includegraphics[scale=.4]{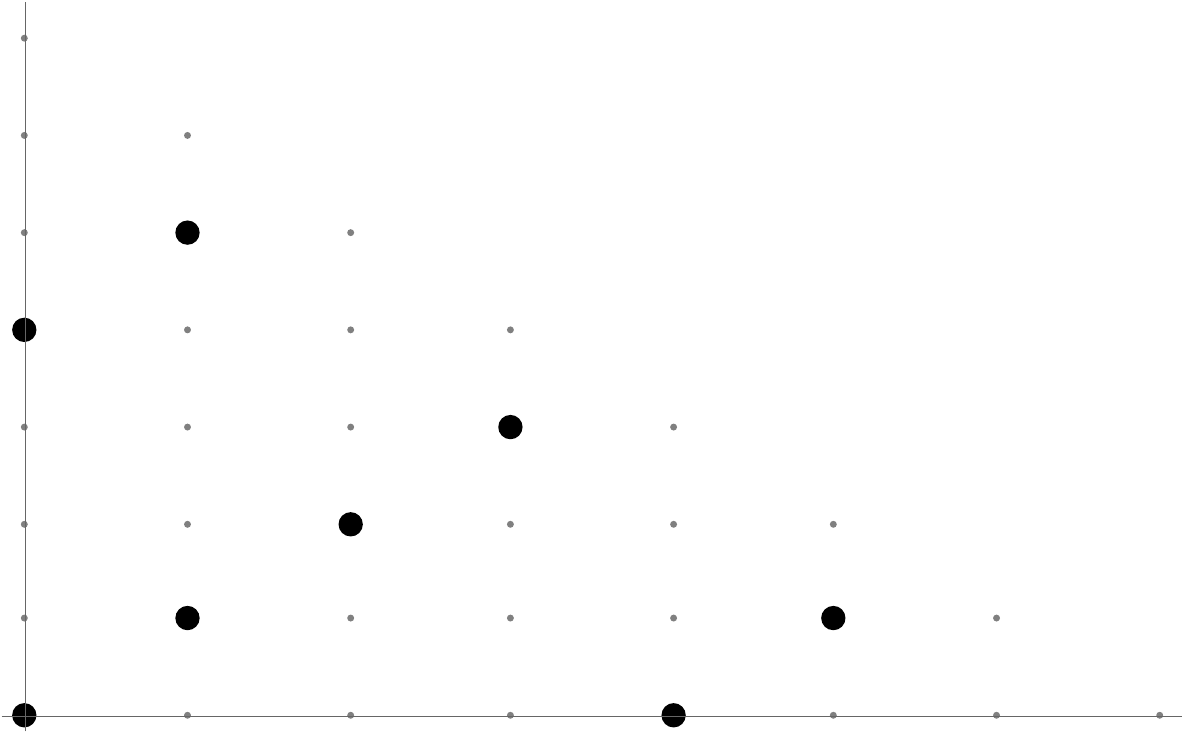}\hspace{.1in}
\includegraphics[scale=.4]{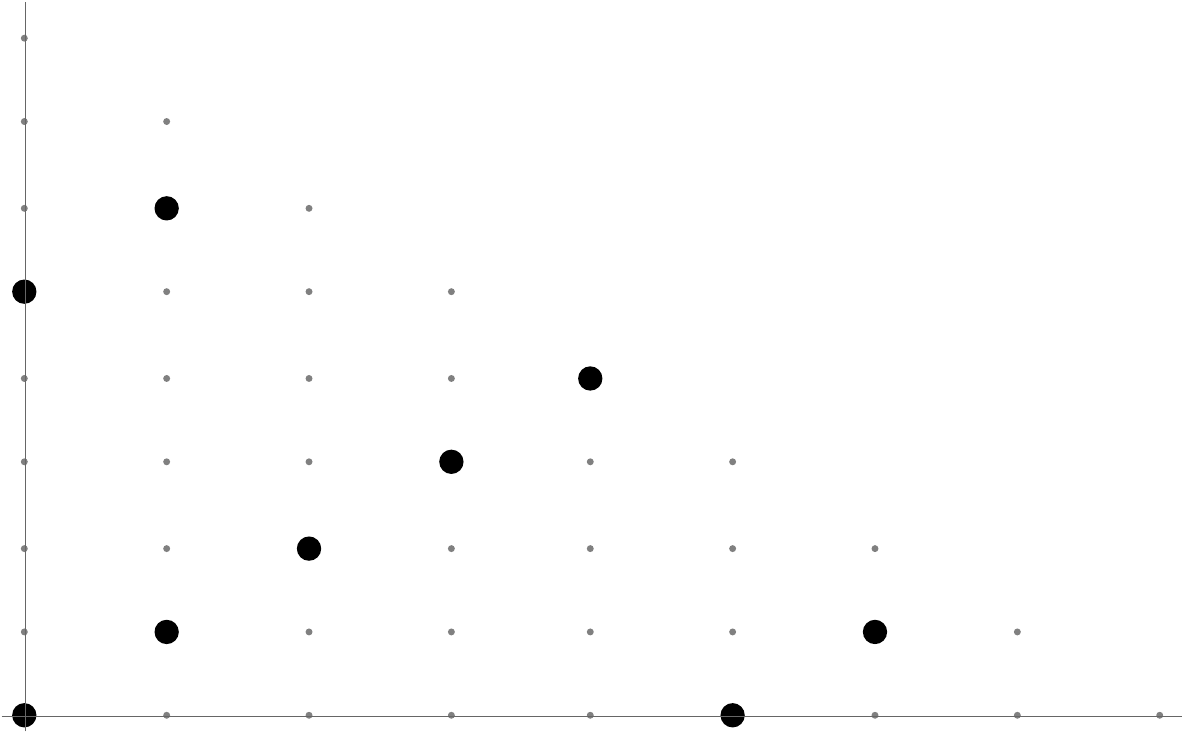}\hspace{.1in}
\includegraphics[scale=.4]{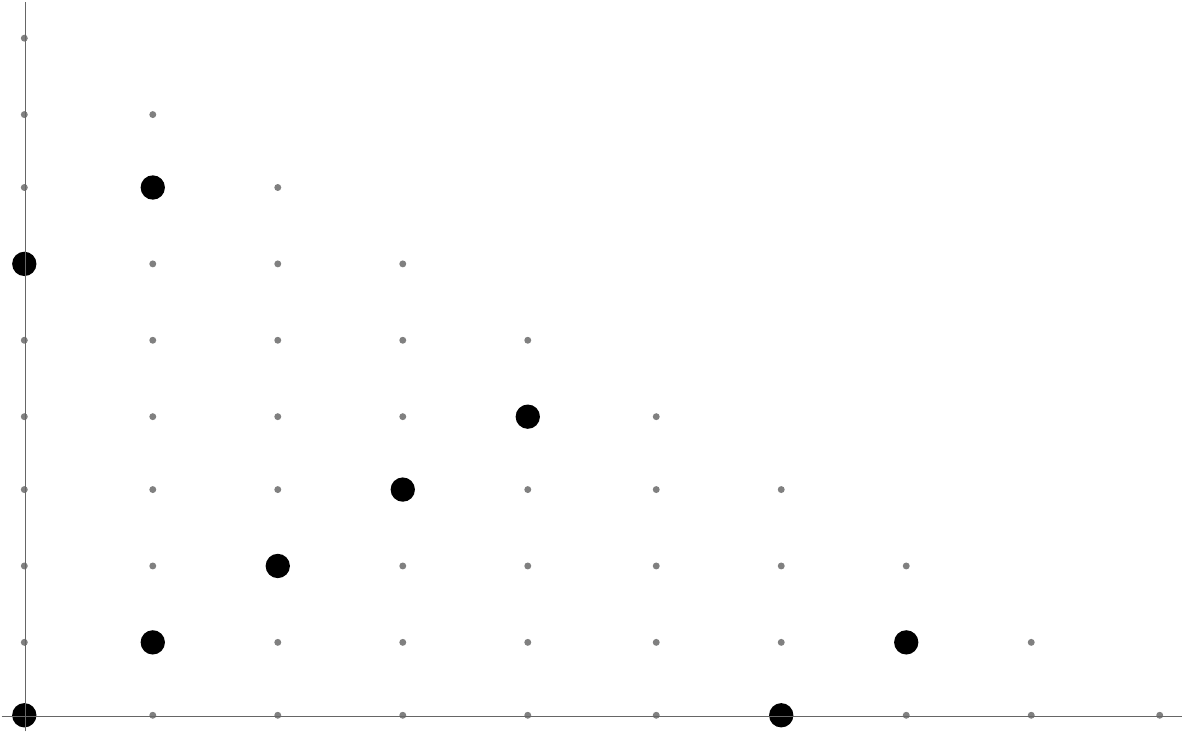}
\caption{
The set of $(j,k)$ for which $t^{d-j-k}(x+iy)^j(x-iy)^k\in\C[t,x,y]_d^{C_n}$
for $(n,d) = (4,7)$, $(5,8)$, $(6,9)$ (left to right).}\label{fig: supports}
\end{figure}

\begin{example}[$d=7, n=4$] 
Consider $f \in \C[t,x,y]_7^{C_4}$. Then $f$ is a sum of terms of the form 
$t^{7-j-k}(x+iy)^j(x-iy)^k$ where $j\leq 5$ and $k\leq 5$, shown on the left in Figure~\ref{fig: supports}. 
This confirms that $[0:1:\pm i]$ are singular points of $\V_{\C}(f)$.
\end{example}

Another way to try to construct a determinantal representation is to use Theorem~\ref{thm:d=qn} and hope to further specialize its structure.

\begin{question}\label{q: d=qn+m det rep degeneration}
	Let $d=qn+m$ for some $q >1$ and $m \in [n-1]$ and suppose $f~\in~\HH_d^{\Gamma}$. 
	Can we always write $t^{n-m}f = F_A$ for some matrix $A \in \CWS_{\C}(n,(q+1)n)$ 
	of the form $A = \begin{pmatrix}A' & \textbf{0}\\  \textbf{0} &  \textbf{0} \end{pmatrix}$, where 
	$A'\in  \CWS_{\C}(n,d)$?
	\end{question}

\begin{example} Take $f = f_1$ from Example~\ref{ex: fB not invariant}. 
This is quartic and invariant under action of the group $C_3$. 
Then $t^2f=F_A$ for the matrix $A\in \CWS_{\C}(3,6)$ shown in equation \eqref{eq:AwithZeros}. 
Since the last two rows and columns of $A$ are zero, 
$f$ has a determinantal representation $f=F_{A'}$ where $A'$ is the 
leading $4 \times 4$ minor of $A$. 
\end{example}

\subsection{Higher Dimensions} 
One can also consider invariant hyperbolic polynomials and determinantal representations in 
more than three variables.  
Suppose that $\Gamma\subset \GL(\R^n)$ is a finite group that fixes a point ${\bf e}\in \R^n$ and 
let  $\mathcal{H}_d^{\Gamma}$ denote the set of polynomials in $f\in \R[x_1, \hdots, x_n]_d$ invariant under $\Gamma$, hyperbolic with respect to ${\bf e}$, and with $f({\bf e}) = 1$, 
as in Section~\ref{sec:Nuij}. 
 
\begin{question} 
Is the analogue of Theorem~\ref{thm:InvHyp} true in higher dimensions? That is, are both $\mathcal{H}_d^{\Gamma}$ and its interior contractible? 
\end{question}

As shown in Theorem~\ref{thm:d=qn}, hyperbolic polynomials invariant under $C_n$ and $D_{2n}$
have determinantal representations that \emph{certify} their invariance. 
More generally, let  $\rho:\Gamma \rightarrow \GL(\C^d)$ be a representation of the group $\Gamma$. 
This defines an action of $\Gamma$ on the set of $d\times d$ Hermitian matrices by conjugation, 
$\gamma \cdot A =  \rho(\gamma) A  \rho(\gamma)^*$. 
We say that a $d\times d$ linear matrix $\mathcal{A}({\bf x})  = \sum_i x_i A_i$ is invariant with respect to 
$\Gamma$ and $\rho$ 
if for every $\gamma\in \Gamma$, 
\begin{equation}\label{eq:invariantDetRep}
\mathcal{A}(\gamma \cdot {\bf x}) =  \rho(\gamma) \mathcal{A}({\bf x})  \rho(\gamma)^*.
\end{equation}
The determinant $f = \det(\mathcal{A}({\bf x}))$ is then invariant under the action of $\Gamma$. 
Indeed, since $\Gamma$ is finite, the determinant of $\rho(\gamma)$ is a root of unity 
and so the determinants of $\rho(\gamma)$ and  $\rho(\gamma)^*$ multiply to $1$. 
This shows that the determinants of $\mathcal{A}(\gamma \cdot {\bf x})$ and $\mathcal{A}({\bf x})$ are equal for all $\gamma \in \Gamma$. 

\begin{example}\label{ex:MoreVar}
The elementary symmetric function $e_{n-1}(x_1, \hdots, x_n) = \sum_{k=1}^n\prod_{j\neq k}x_j$ 
is hyperbolic with respect to the vector ${\bf e} = (1,\hdots, 1)$ and invariant under the 
natural action of the symmetric group $S_n$. 
Sanyal \cite{Sanyal} shows that 
the form $e_{n-1}({\bf x}) $ has a determinantal representation $\mathcal{A}({\bf x}) = {\rm diag}(x_1, \hdots, x_{n-1}) + x_nJ$, 
where $J$ is the all-ones matrix of size $n-1$.  This representation is invariant with respect to $S_n$ 
and the representation $\rho: S_n \to \GL(\C^{n-1})$ obtained by restricting $S_n$ to the 
hyperplane of points with coordinate sum one. 
Specifically, we take the representation $\rho(\pi) = (v_{\pi(1)}, \hdots, v_{\pi(n-1)})$ where for $j=1, \hdots, n$, $v_j$ is the $j$th unit coordinate vector in $\R^{n-1}$ and $v_n$ is the constant vector $-1$. 
Specializing $e_7(x_1, \hdots, x_8)$ and its determinantal representation to the eight linear forms  
$x_j = t\pm x \pm y\pm z$ gives a surface in $\P^3(\R)$ that is hyperbolic and invariant under the octahedral group,
shown in Figure~\ref{fig:icermLogo}.  
\end{example}

\begin{figure}
\includegraphics[height=2in]{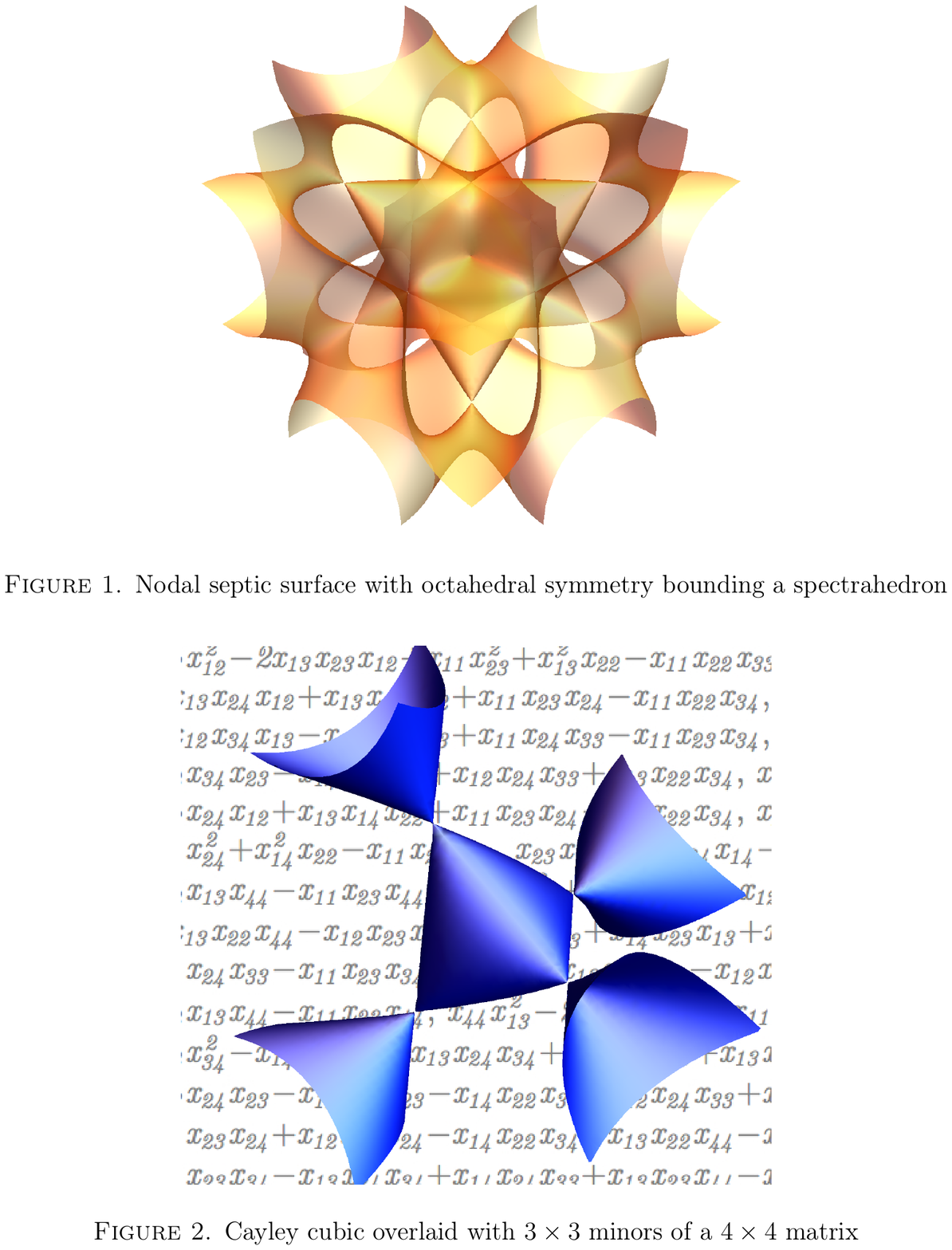}
\caption{A hyperbolic surface in $\P^3(\R)$ invariant under the octahedral group 
with an invariant definite determinantal representation given in Example~\ref{ex:MoreVar}.}\label{fig:icermLogo}
\end{figure}

For $n>3$, most forms in $\R[x_1, \hdots, x_n]_d$ do not have $d\times d$ determinantal representations, 
so a verbatim generalization of Theorem~\ref{thm:d=qn} is false. 
However, there are two other natural ways of generalizing to more variables. 
One is to restrict to polynomials that are already determinantal: 
\begin{question} 
For every finite group $\Gamma\subset \GL(\R^n)$, is there a representation $\rho:\Gamma \rightarrow  \GL(\C^d)$ 
so that every determinantal hyperbolic polynomial $f\in \mathcal{H}_d^{\Gamma}$ has a 
definite determinantal representation $f = \det(\mathcal{A}({\bf x}))$ that is invariant with respect to $\Gamma$ and 
$\rho$, as in \eqref{eq:invariantDetRep}? 
\end{question}

A more ambitious goal would be to show that every invariant hyperbolic polynomial has 
determinantal representation certifying its hyperbolicity and invariance. 
For this, we use the terminology of \emph{hyperbolicity cones}. 
If a polynomial $f\in \R[x_1, \hdots, x_n]$ is hyperbolic with respect to ${\bf e}\in \R^n$, 
its hyperbolicity cone, $C(f,{\bf e})$ is defined to be the connected component of $\R^n\backslash \V_{\R}(f)$ 
containing ${\bf e}$. 
The Generalized Lax Conjecture states that for every hyperbolic polynomial $f$, there is some multiple 
$f\cdot g$ with a definite determinantal representation so that the hyperbolicity cones 
of $f$ and $f\cdot g$ agree. This is still open. 
The discussion above suggests the following invariant version:

\begin{question}[Invariant Generalized Lax Conjecture]
Is every invariant hyperbolic polynomial a factor of an invariant determinant? 
That is, for $f\in \mathcal{H}_d^{\Gamma}$, does there exist $e\in \N$, $g\in \mathcal{H}_e^{\Gamma}$, 
and a representation $\rho:\Gamma \rightarrow  \GL(\C^{d+e})$ so that the product 
$f\cdot g$ has an invariant, definite determinantal representation $f\cdot g = \det(\mathcal{A}({\bf x}))$ with $C(f,{\bf e})=C(f\cdot g,{\bf e})$? 
\end{question}


\end{document}